\newcommand{\EndPf}{\hfill $\square$ \medskip}     
\newcommand{\bi}{\begin{itemize}}  
\newcommand{\ei}{\end{itemize}}     
\newcommand{\bc}{\begin{center}}  
\newcommand{\ec}{\end{center}}     
\newcommand{\ls}[1]
   {\dimen0=\fontdimen6\the\font \lineskip=#1\dimen0
   \advance\lineskip.5\fontdimen5\the\font \advance\lineskip-\dimen0
   \lineskiplimit=.9\lineskip \baselineskip=\lineskip
   \advance\baselineskip\dimen0 \normallineskip\lineskip
   \normallineskiplimit\lineskiplimit \normalbaselineskip\baselineskip
   \ignorespaces }
\numberwithin{equation}{section}
\newcommand{\slim} {\mathop{\rm lim\,sup}}
\newcommand{\ilim} {\mathop{\rm lim\,inf}}
\newtheorem{lemma}{Lemma}[section]
\newtheorem{theorem}[lemma]{Theorem}
\newtheorem{corollary}[lemma]{Corollary}
\newtheorem{example}[lemma]{Example}
\newtheorem{remark}[lemma]{Remark}
\def\D{\mathcal{D}}
\def\S{\mathbb{S}}
\def\K{\mathbb{K}}
\def\Y{\mathbb{Y}}
\def\h{\mathbf{I}}
\def\X{\mathbb{X}}
\def\E{\mathbb{E}}
\def\A{\mathbb{A}}
\def\H{\mathbb{H}}
\def\R{\overline{\mathbb{R}}}
\def\P{\mathbb{P}}
\def\F{\mathbb{F}}
\def\T{\mathbb{T}}
\def\C{\mathbb{C}}
\def\W{\mathbb{W}}
\def\c{\bar{c}}
\def\B{\mathcal{B}}
\def\oo{\mathcal{O}}
\def\g{\mathcal{G}}
\title{Partially Observable Total-Cost Markov Decision Processes with Weakly Continuous Transition Probabilities}
\begin{document}

\maketitle

\begin{center}
Eugene~A.~Feinberg \footnote{Department of Applied Mathematics and
Statistics,
 Stony Brook University,
Stony Brook, NY 11794-3600, USA, eugene.feinberg@sunysb.edu},\
Pavlo~O.~Kasyanov\footnote{Institute for Applied System Analysis,
National Technical University of Ukraine ``Kyiv Polytechnic
Institute'', Peremogy ave., 37, build, 35, 03056, Kyiv, Ukraine,\
kasyanov@i.ua.},\ and Michael~Z.~Zgurovsky\footnote{National Technical University of Ukraine
``Kyiv Polytechnic Institute'', Peremogy ave., 37, build, 1, 03056,
Kyiv, Ukraine,\
 zgurovsm@hotmail.com
}\\

\bigskip
\end{center}

\begin{abstract}
This paper describes sufficient conditions for the existence of
optimal policies for Partially Observable Markov Decision
Processes (POMDPs) with Borel state, observation, and action sets
and with the expected total costs.  Action sets may not be compact
and one-step cost functions may  be unbounded.   The introduced
conditions are also sufficient for the validity of optimality
equations, semi-continuity of value functions, and convergence of
value iterations to optimal values.  Since POMDPs can be reduced
to Completely Observable Markov Decision Processes (COMDPs), whose
states are posterior state distributions, this paper focuses on
the validity of the above mentioned optimality properties for
COMDPs.  The central question is whether transition probabilities
for a COMDP are weakly continuous.  We introduce sufficient
conditions for this and show that the transition probabilities for
a COMDP are weakly continuous, if transition probabilities of the
underlying Markov Decision Process are weakly continuous and
observation probabilities for the POMDP are continuous in the
total variation. Moreover, the continuity in the total variation
of the observation probabilities cannot be weakened to setwise
continuity. The results
 are illustrated with counterexamples and examples.
\end{abstract}

\section{Introduction} \label{S1}
Partially Observable Markov Decision Processes (POMDPs) play an
important role in   operations research, electrical engineering,
and computer science. They have a broad range of applications to
various areas including sensor networks, artificial intelligence,
target tracking, control and maintenance of complex systems,
finance, and medical decision making. In principle,  it is known
how to solve POMDPs. A POMDP can be reduced to a Completely
Observable Markov Decision Process (COMDP) with the state space
being the set of belief (posterior state) probabilities for the
POMDP; see Hinderer~\cite[Section 7.1]{Hind} and Sawarigi and
Yoshikawa~\cite{SaYo} for countable state spaces and
Rhenius~\cite{Rh},
Yushkevich~\cite{Yu}, Dynkin and Yushkevich~\cite[Chapter 8]{DY},
Bertsekas and Shreve~\cite[Chapter 10]{Bert1}, and 
Hern\'{a}ndez-Lerma~\cite[Chapter 4]{HLerma} for Borel state spaces. 
After an optimal policy for the COMDP is found, it can be used to compute an optimal policy for the POMDP.  However, except
 finite state and action POMDPs (Sondik~\cite{S}),
problems with a continuous filtering transition probability $H$
(Hern\'{a}ndez-Lerma~\cite[Chapter 4]{HLerma}, Hern\'{a}ndez-Lerma
and Romera~\cite{HLerma2}), and a large variety of particular
problems considered in the literature, little is known regarding
the existence and properties of optimal policies for COMDPs and
POMDPs with  expected total costs.  

This paper investigates the existence of optimal policies for
COMDPs and therefore for POMDPs with the expected total discounted
costs and, if the one-step costs are nonnegative, with 
the expected total costs. We provide conditions for the existence of optimal policies and for the validity of other properties of optimal values and
optimal policies: they satisfy optimality equations, optimal values are lower semi-continuous functions, and value iterations converge to optimal
infinite-horizon values.

Since a COMDP is a Markov Decision Process (MDP) with Borel state
and action sets, it is natural to apply  results on the existence
of optimal policies for MDPs to COMDPs. Feinberg et al.~\cite{FKZ}
introduced a mild
assumption, called Assumption \textbf{(${\rm \bf W^*}$)}, 
for the existence of stationary optimal policies for infinite
horizon MDPs, lower semi-continuity of value functions,
characterization of the sets of optimal actions via optimality
equations, and convergence of value iterations to optimal values
for the expected total discounted  costs, if one-step costs are
bounded below, 
 and for the
expected total 
costs, if the one-step costs are nonnegative (according to the
main result in \cite{FKZ}, if another mild assumption is added to
Assumption \textbf{(${\rm \bf W^*}$)}, then there exist stationary
optimal policies for average costs per unit time). Assumption
\textbf{(${\rm \bf W^*}$)} consists  of two conditions: transition
probabilities are weakly continuous and one-step cost functions
are $\K$-inf-compact. The notion of $\K$-inf-compactness (see the
definition below) was introduced in Feinberg et al.~\cite{JMAA},
and it is slightly stronger than the lower semi-continuity of the
cost function  and its inf-compactness in the action parameter. In
operations research applications, one-step cost functions are
usually $\K$-inf-compact. Here we consider a POMDP whose
underlying MDP satisfies Assumption \textbf{(${\rm \bf W^*}$)}.
According to Theorem~\ref{th:wstar}, this implies the
$\K$-inf-compactness of the cost function for the COMDP and it
remains to prove the weak continuity of  transition probabilities
for the COMDP to verify Assumption \textbf{(${\rm \bf W^*}$)} for
the COMDP and therefore the existence of optimal policies and the
validity of additional optimality properties for the COMDP and
POMDP.

For problems with incomplete information, the filtering equation
$z_{t+1}=H(z_{t},a_t,y_{t+1})$ presented in equation (\ref{3.1})
below, that links the posterior state probabilities $z_t,
z_{t+1},$ the selected action $a_t$, and the
observation $y_{t+1},$ plays an important role. This equation 
presents a general form of Bayes's rule.
Hern\'{a}ndez-Lerma~\cite[Chapter 4]{HLerma} showed that the weak
continuity of $H$ in all three variables and the weak continuity
of transition and observation probabilities imply the weak
continuity of transition probabilities for the COMDP.   In this
paper we introduce another condition, Assumption {\bf(H)}, which
is weaker than the weak continuity of the filtering kernel $H$ in
$(z_t,a_t,y_{t+1})$. 
 We prove that this condition and setwise
continuity of the stochastic kernel on the observation set, given a posterior state 
probability and prior action, imply the weak continuity of the
transition probability for the COMDP; Theorem~\ref{th:contqqq2}.
In particular, if either these assumptions or the weak continuity
of $H$ and observation  probabilities are added to Assumption
\textbf{(${\rm \bf W^*}$)} for the underlying MDP of
the POMDP, 
the COMDP satisfies Assumption \textbf{(${\rm \bf W^*}$)} and therefore various optimality properties, including the existence of
stationary optimal policies and convergence of value iterations hold; see Theorem~\ref{main1}.

By using Assumption {\bf(H)} this paper answers the following
question: which conditions on transition and observation
probabilities are sufficient for weak continuity of transition
probabilities for the COMDP? Theorem~\ref{t:totalvar} states that
weak continuity of transition probabilities and continuity of
observation probabilities in the total variation imply weak
continuity of transition  probabilities for COMDPs. Thus,
Assumption \textbf{(${\rm \bf W^*}$)} and continuity of
observation probabilities in the total variation imply that the
COMDP satisfies Assumption \textbf{(${\rm \bf W^*}$)} and
therefore optimal policies exist for the COMDP and for the POMDP,
value iterations converge to the optimal value, and other
optimality properties hold; Theorem~\ref{main}. Example~\ref{exa1}
demonstrates that continuity of observation probabilities in the
total variation  cannot be relaxed to setwise continuity.

If the observation set is countable and it is endowed with the
discrete topology, convergence in the total variation and weak
convergence are equivalent. Thus, Theorem~\ref{t:totalvar} implies
weak continuity of the transition probability for the COMDP with a
countable observation set endowed with the discrete topology and
with weakly
continuous transition and observation kernels; Hern\'{a}ndez-Lerma~\cite[p. 93]{HLerma}. However, as 
Example~\ref{exa} demonstrates, under these conditions the
filtering transition probability $H$ may not be continuous.
This example motivated us to introduce Assumption \textbf{(H)}. 

 The main results of this paper are presented in Section~\ref{S3}.  Section~\ref{S4}
 contains three counterexamples. In addition to the two examples
 described above, Example~\ref{exa3} demonstrates that setwise
continuity of the stochastic kernel on the observation set, given a posterior state 
probability and prior action, is essential to ensure
 that Assumption {\bf(H)} implies continuity of the transition
 probability for the COMDP.  Section~\ref{S5} describes properties of
 stochastic kernels used in the proofs of main results presented in
 Section~\ref{S6}.  Section~\ref{S7} introduces a sufficient condition for
 the weak continuity of transition probabilities for the COMDP that
 combines Assumption {\bf(H)} and the weak continuity of $H$.  Combining
 these properties together is important because 
  Assumption {\bf(H)} may hold for some observations and
 continuity of $H$ may hold for others.  Section~\ref{S8} contains
 three illustrative examples:  (i) a model defined by stochastic
 equations including Kalman's filter, (ii) a model for inventory
 control with incomplete records (for particular inventory control
 problems of such  type see Bensoussan et
 al.~\cite{Bens3}--\cite{Bens5} and references therein), and (iii)  the classic Markov decision model with incomplete information studied by
 Aoki~\cite{Ao}, Dynkin~\cite{Dy}, Shiryaev~\cite{Sh},
 Hinderer~\cite[Section
 7.1]{Hind},
 Sawarigi and Yoshikawa~\cite{SaYo}, Rhenius~\cite{Rh},
Yushkevich~\cite{Yu}, Dynkin and Yushkevich~\cite[Chapter 8]{DY},
for which we provide
 a sufficient condition for the existence of optimal policies,
 convergence of value iterations to optimal values, and other
 optimality properties formulated in Theorems~\ref{main1}.

\section{Model Description}\label{S2}

For a metric space $\S$, let ${\mathcal B}(\S)$ be its Borel
$\sigma$-field, 
that is, the $\sigma$-field generated by all open subsets of the
metric space $\S$.  For a Borel set $E\in\B( \S),$ we denote by
${\mathcal B}(E)$ the $\sigma$-field whose elements are
intersections of $E$ with elements of ${\mathcal B}(\S)$.  Observe
that $E$ is a metric space with the same metric as on $\S$, and
${\mathcal B}(E)$ is its Borel $\sigma$-field. For a metric space
$\S$, we denote by $\P(\S)$ the \textit{set of probability
measures} on $(\S,{\mathcal B}(\S)).$ A sequence of probability
measures $\{\mu^{(n)}\}_{n=1,2,\ldots}$ from $\P(\S)$
\textit{converges weakly (setwise)} to $\mu\in\P(\S)$ if for any
bounded continuous (bounded Borel-measurable) function $f$ on $\S$
\[\int_\S f(s)\mu^{(n)}(ds)\to \int_\S f(s)\mu(ds) \qquad {\rm as \quad
}n\to\infty.
\]
A sequence of probability measures $\{\mu^{(n)}\}_{n=1,2,\ldots}$ from $\P(\S)$ \textit{converges in  the total  variation} to $\mu\in\P(\S)$ if
\[
\sup\left\{\int_\S f(s)\mu^{(n)}(ds)-\int_\S f(s)\mu(ds)\ : \ f:\S\to [-1,1]\mbox{ is Borel-measurable} \right\}\to 0\quad {\rm as \quad }n\to\infty.
\]
Weak convergence, setwise convergence, and convergence in the total variation are used in Y\"uksel and Linder~\cite{YL} to describe convergence of
observation channels.  Note that $\P(\S)$ is a separable metric space with respect to the weak convergence topology for probability measures, when $\S$ is
a separable metric space; Parthasarathy \cite[Chapter~II]{Part}. For separable metric spaces $\S_1$ and $\S_2$, a (Borel-measurable) \textit{stochastic
kernel} $R(ds_1|s_2)$ on $\S_1$ given $\S_2$ is a mapping $R(\,\cdot\,|\,\cdot\,):\B(\S_1)\times \S_2\to [0,1]$, such that $R(\,\cdot\,|s_2)$ is a
probability measure on $\S_1$ for any $s_2\in \S_2$, and $R(B|\,\cdot\,)$ is a Borel-measurable function on $\S_2$ for any Borel set $B\in\B(\S_1)$. A
stochastic kernel $R(ds_1|s_2)$ on $\S_1$ given $\S_2$ defines a Borel measurable mapping $s_2\to R(\,\cdot\,|s_2)$ of $\S_2$ to the metric space
$\P(\S_1)$ endowed with the topology of weak convergence. 
A stochastic kernel
$R(ds_1|s_2)$ on $\S_1$ given $\S_2$ is called 
\textit{weakly continuous (setwise continuous, continuous in  the total variation)}, if $R(\,\cdot\,|x^{(n)})$ converges weakly (setwise, in
 the total  variation) to $R(\,\cdot\,|x)$ whenever $x^{(n)}$ converges to $x$
in $\S_2$. For one-point sets $\{s_1\}\subset \S_1,$ we
sometimes write $R(s_1|s_2)$ instead of $R(\{s_1\}|s_2)$. 

For a Borel subset $S$ of a metric space $(\S,\rho)$, where $\rho$
is a metric, consider the
 metric space $(S,\rho)$.  A set $B$ is called open
(closed,  compact) in $S$ if $B\subseteq S$ and $B$ is open (closed,
compact, respectively) in $(S,\rho)$. Of course, if $S=\S$, we omit
``in $\S$''. Observe that, in general, an open (closed, compact) set
in $S$ may not be open (closed, compact, respectively).

Let $\X$, $\Y$, and $\A$ be 
Borel subsets of Polish spaces (a Polish space is a complete
separable metric space), $P(dx'|x,a)$ be a stochastic kernel on
$\X$ given $\X\times\A$, $Q(dy| a,x)$ be a stochastic kernel on
$\Y$ given $\A\times\X$, $Q_0(dy|x)$ be a stochastic kernel on
$\Y$ given $\X$, $p$ be a probability distribution on $\X$,
$c:\X\times\A\to \R=\mathbb{R}\cup\{+\infty\}$ be a bounded below
Borel function on $\X\times\A,$ where $\mathbb{R}$ is a real line.

A 
{\it POMDP} is specified by a tuple $(\X,\Y,\A,P,Q,c)$, where $\X$
is the \textit{state space}, $\Y$ is the \textit{observation set},
$\A$ is the \textit{action} \textit{set}, $P(dx'|x,a)$ is the
\textit{state transition law}, $Q(dy| a,x)$ is the
\textit{observation kernel}, $c:\X\times\A\to \R$ is the
\textit{one-step cost}.

The partially observable Markov decision process evolves as follows:
\begin{itemize}
\item at time $t=0$, the initial unobservable state $x_0$ has a
given prior distribution $p$; \item the initial observation $y_0$ is
generated according to the initial observation kernel
$Q_0(\,\cdot\,|x_0)$; \item at each time epoch $t=0,1,\ldots,$ if
the state of the system is $x_t\in\X$ and the decision-maker chooses
an action $a_t\in \A$, then the cost $c(x_t,a_t)$ is incurred; \item
the system moves to a state $x_{t+1}$ according to
the transition law $P(\,\cdot\,|x_t,a_t)$, $t=0,1,\ldots$; 
\item an observation $y_{t+1}\in\Y$ is generated by the
observation kernel $Q(\,\cdot\,|a_t,x_{t+1})$, $t=0,1,\ldots\ .$ 
\end{itemize}

Define the \textit{observable histories}: $h_0:=(p,y_0)\in \H_0$ and
$h_t:=(p,y_0,a_0,\ldots,y_{t-1}, a_{t-1}, y_t)\in\H_t$ for all
$t=1,2,\dots$,
where $\H_0:=\P(\X)\times \Y$ and $\H_t:=\H_{t-1}\times \A\times
\Y$ if $t=1,2,\dots$. A \textit{policy} $\pi$ 
for the POMDP is defined as a sequence
$\pi=\{\pi_t\}_{t=0,1,\ldots}$ 
of stochastic kernels $\pi_t$ on $\A$ given $\H_t$. A policy $\pi$ 
is called \textit{nonrandomized}, if each probability measure
$\pi_t(\,\cdot\,|h_t)$ is concentrated at one point.  The
\textit{set of all policies} is denoted by $\Pi$. The Ionescu
Tulcea theorem (Bertsekas and Shreve \cite[pp. 140-141]{Bert1} or
Hern\'andez-Lerma and Lasserre \cite[p.178]{HLerma1}) implies that
a policy $\pi\in \Pi$ and an initial distribution $p\in \P(\X)$,
together with the stochastic kernels $P$, $Q$ and $Q_0$, determine
a unique probability measure $P_{p}^\pi$ on the set of all
trajectories
$
(\X\times\Y\times \mathbb{A})^{\infty}$
endowed with the  $\sigma$-field defined by the products of Borel
$\sigma$-fields  $\B(\X)$, $\B(\Y)$, and $\B(\mathbb{A})$. 
The expectation with respect to this probability measure is denoted by $\E_{p}^\pi$.

For a finite horizon $T=0,1,...,$ 
the \textit{expected total discounted costs} are
\begin{equation}\label{eq1}
V_{T,\alpha}^{\pi}(p):=\mathbb{E}_p^{\pi}\sum\limits_{t=0}^{T-1}\alpha^tc(x_t,a_t),\qquad\qquad
p\in \P(\X),\,\pi\in\Pi,
\end{equation}
where $\alpha\ge 0$ is the discount factor,
$V_{0,\alpha}^{\pi}(p)=0.$ Consider the following assumptions. \vskip 0.9 ex 

\noindent\textbf{Assumption (D)}. $c$ is bounded below on
$\X\times\A$ and
  $\alpha\in[0,1)$.

\noindent\textbf{Assumption (P)}. $c$ is nonnegative on
$\X\times\A$ and $\alpha\in[0,1]$.\vskip 0.9 ex

When $T=\infty,$ formula (\ref{eq1}) defines the \textit{infinite
horizon expected total discounted cost}, and we denote it by
$V_\alpha^\pi(p).$ We use the notations \textbf{({\bf D})} and
\textbf{({\bf P})} following Bertsekas and Shreve~\cite[p.
214]{Bert1}, where cases~\textbf{({\bf D})}, \textbf{({\bf N})},
and \textbf{({\bf P})} are considered. However, Assumption
\textbf{({\bf D})} here is weaker than the conditions assumed in
case~\textbf{({\bf D})} in \cite[p. 214]{Bert1}, where one-step
costs are assumed to be bounded.

Since the function $c$ is bounded below on $\X\times\A$, a
discounted model can be converted into a model with nonnegative
costs by shifting the cost function. In particular, let
$c(x,a)\ge-K$ for all $(x,a)\in\X\times\A$. Consider a new cost
function $\hat{c}(x,a):=c(x,a)+K$ for all $(x,a)\in\X\times\A$. 
Then the corresponding total discounted reward is equal to
\[
\hat{V}_{\alpha}^{\pi}(p):=V_{\alpha}^{\pi}(p)+\frac{K}{1-\alpha},\qquad\qquad
\pi\in\Pi,\,p\in \P(\X).
\]
Thus, optimizing $V_{\alpha}^{\pi}$ and  $\hat{V}_{\alpha}^{\pi}$
are equivalent problems, and $\hat{V}_{\alpha}^{\pi}$ is the
objective function for a model with nonnegative costs. Though
Assumption \textbf{(P)} is  more general, Assumption \textbf{(D)}
is met in a wide range of applications. Thus we formulate the
results for either of these Assumptions. 

For any function $g^{\pi}(p)$, including $g^{\pi}(p)=V_{T,\alpha}^{\pi}(p)$ and $g^{\pi}(p)=V_{\alpha}^{\pi}(p)$, define the \textit{optimal values}
\begin{equation*} g(p):=\inf\limits_{\pi\in \Pi}g^{\pi}(p), \qquad
\ p\in\P(\X).
\end{equation*} 
A policy $\pi$ is called \textit{optimal} for the respective criterion, if $g^{\pi}(p)=g(p)$ for all $p\in \P(\X).$ For $g^\pi=V_{T,\alpha}^\pi$, the
optimal policy is called \emph{$T$-horizon discount-optimal}; for $g^\pi=V_{\alpha}^\pi$, it is called \emph{discount-optimal}.

In this paper, for the expected total costs and objective values
we use  similar notations for POMDPs, MDPs, and COMDPs. However,
we reserve the symbol $V$ for POMDPs, the symbol $v$ for MDPs, and
the notation ${\bar v}$ for COMDPs.  So, in addition to the
notations $V_{T,\alpha}^\pi,$ $V_{\alpha}^\pi,$ $V_{T,\alpha},$
and $V_{\alpha}$ introduced for POMDPs, we shall use the notations
$v_{T,\alpha}^\pi,$ $v_{\alpha}^\pi,$ $v_{T,\alpha}$, $v_{\alpha}$
and ${\bar v}_{T,\alpha}^\pi,$ ${\bar v}_{\alpha}^\pi,$ ${\bar
v}_{T,\alpha}$, ${\bar v}_{\alpha}$ for  the similar objects for
MDPs
and COMDPs, respectively. 

We recall that a function $c$ defined  on $\X\times\A$ with values in $\R$ is inf-compact 
if the set $\{(x,a)\in \X\times\A:\, c(x,a)\le \lambda\}$ is
compact  for any finite number $\lambda.$ A function $c$ defined
on $\X\times \A$ with values in $\R$ is called $\K$-inf-compact on
$\X\times\A$, if for any compact set $K\subseteq\X$, the function
$c:K\times\A\to \R$ defined on $K\times\A$ is inf-compact;
Feinberg et al.~\cite[Definition 1.1]{JMAA1, JMAA}. According to
Feinberg et al.~\cite[Lemma 2.5]{JMAA}, a bounded below function
$c$ is $\K$-inf-compact on the product of metric spaces $\X$ and
$\A$ if and only if it satisfies the following two conditions:

(a) $c$ is lower semi-continuous;

(b) if a sequence $\{x^{(n)} \}_{n=1,2,\ldots}$ with values in $\X$ converges and its limit $x$ belongs to $\X$ then any sequence $\{a^{(n)}
\}_{n=1,2,\ldots}$ with $a^{(n)}\in \A$, $n=1,2,\ldots,$ satisfying the condition that the sequence $\{c(x^{(n)},a^{(n)}) \}_{n=1,2,\ldots}$ is bounded
above, has a limit point $a\in\A.$

For a POMDP $(\X,\Y,\A,P,Q,c)$, consider the  MDP $(\X,\A,P,c)$,
in which all the states are observable. An MDP can be viewed as a
particular POMDP with $\Y=\X$ and $Q(B|a,x)=Q(B|x)={\bf I}\{x\in
B\}$ for all $x\in\X,$ $a\in \A$, and $B\in{\mathcal B}(\X)$. In
addition, for an MDP an initial state is observable.  Thus for an
MDP an initial state  $x$ is considered instead of the initial
distribution $p.$ In fact, this MDP possesses a special property
that action sets at
all the states are equal. For MDPs, Feinberg et al.~\cite{FKZ} 
provides general conditions for the existence of optimal policies,
validity of optimality equations, and convergence of value
iterations. Here we formulate these conditions for an MDP whose
action sets in all states are equal.

\noindent\textbf{Assumption (${\rm \bf W^*}$)} (cf. Feinberg et al.~\cite{FKZ} and Lemma 2.5 in \cite{JMAA}). 

(i) the function $c$ is $\K$-inf-compact on $\X\times\A$;

(ii) the transition probability  $P(\,\cdot\,|x,a)$ is weakly
continuous in $(x,a)\in \X\times\A$.

For an MDP, a nonrandomized policy is called \textit{Markov}, if
all decisions depend only on the current state and time. A Markov
policy is called \textit{stationary}, if all decisions depend only
on current states.

\begin{theorem}{\rm (cf. Feinberg et al.~\cite[Theorem~2]{FKZ}).} \label{teor4.3a}
Let MDP $(\X,\A,P,c)$ satisfy Assumption \textbf{(${\rm \bf
W^*}$)}. Let either Assumption {\rm\textbf{(P)}} or Assumption
{\rm\textbf{(D)}} hold. Then:

{(i})  the functions $v_{t,\alpha}$, $t=0,1,\ldots$, and $v_\alpha$ are lower semi-continuous on $\X$, and $v_{t,\alpha}(x)\to v_\alpha (x)$ as $t \to
\infty$ for all $x\in \X;$

{(ii)} for each $x\in \mathbb{X}$ and $t=0,1,\ldots,$
\begin{equation}\label{eq433a}
v_{t+1,\alpha}(x)=\min\limits_{a\in \A}\left\{c(x,a)+\alpha \int_\X v_{t,\alpha}(y)P(dy|x,a)\right\},
\end{equation}
where $v_{0,\alpha}(x)=0$ for all $x\in \X$, and the nonempty sets
\[
A_{t,\alpha}(x):=\left\{a\in \A\,:\, v_{t+1,\alpha}(x)=c(x,a)+\alpha \int_\X v_{t,\alpha}(y)P(dy|x,a) \right\},\quad x\in \X,\ t=0,1,\ldots,
\]
satisfy the following properties: (a) the graph ${\rm Gr}(A_{t,\alpha})=\{(x,a):\, x\in\X, a\in A_{t,\alpha}(x)\}$, $t=0,1,\ldots,$ is a Borel subset of
$\X\times \mathbb{A}$, and (b) if $v_{t+1,\alpha}(x)=+\infty$, then $A_{t,\alpha}(x)=\A$ and, if $v_{t+1,\alpha}(x)<+\infty$, then $A_{t,\alpha}(x)$ is
compact;

{(iii)} for each $T=1,2,\ldots$, there exists an optimal Markov
$T$-horizon policy $(\phi_0,\ldots,\phi_{T-1})$, and if for a
$T$-horizon Markov policy $(\phi_0,\ldots,\phi_{T-1})$ the
inclusions $\phi_{T-1-t}(x)\in A_{t,\alpha}(x)$, $x\in\X,$
$t=0,\ldots,T-1,$ hold, then this policy is $T$-horizon optimal;

{(iv)} for each $x\in \mathbb{X}$
\begin{equation}\label{eq5aa}
v_{\alpha}(x)=\min\limits_{a\in \A}\left\{c(x,a)+\alpha\int_\X
v_{\alpha}(y)P(dy|x,a)\right\},
\end{equation}
and the nonempty sets
\[
A_{\alpha}(x):=\left\{a\in \A\,:\,v_{\alpha}(x)=c(x,a)+\alpha
\int_\X v_{\alpha}(y)P(dy|x,a) \right\},\quad x\in \X,
\]
satisfy the following properties: (a) the graph ${\rm
Gr}(A_{\alpha})=\{(x,a):\, x\in\X, a\in A_\alpha(x)\}$  is a Borel
subset of $\X\times \mathbb{A}$, and (b) if $v_{\alpha}(x)=+\infty$,
then $A_{\alpha}(x)=\A$ and, if $v_{\alpha}(x)<+\infty$, then
$A_{\alpha}(x)$ is compact;

{(v)} for infinite-horizon problems there exists a stationary
discount-optimal policy $\phi_\alpha$, and a stationary policy
$\phi_\alpha^{*}$ is optimal if and only if $\phi_\alpha^{*}(x)\in
A_\alpha(x)$ for all $x\in \X$;

{(vi)}  {\rm (Feinberg and Lewis~\cite[Proposition 3.1(iv)]{FL})} if $c$ is inf-compact on $\X\times\A$, then the functions $v_{t,\alpha}$, $t=1,2,\ldots$,
and $v_\alpha$ are inf-compact on $\X$.
\end{theorem}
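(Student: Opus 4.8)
The plan is to obtain this theorem as a direct specialization of \cite[Theorem~2]{FKZ}, whose conclusions are established for a general MDP in which the set of available actions may depend on the state. The key observation is that the MDP $(\X,\A,P,c)$ considered here is precisely the special case of that general model in which the action-availability correspondence is the constant multifunction $x\mapsto\A$. For this constant correspondence the graph is ${\rm Gr}(A)=\X\times\A$, so the $\K$-inf-compactness hypothesis imposed in \cite{FKZ} on the cost restricted to the graph reduces exactly to part (i) of Assumption \textbf{(${\rm \bf W^*}$)}, namely $\K$-inf-compactness of $c$ on $\X\times\A$; and the weak-continuity hypothesis on the transition kernel is part (ii). Both the discounted case (Assumption \textbf{(D)}) and the nonnegative-cost case (Assumption \textbf{(P)}) are already covered by the cited theorem, so no separate treatment is needed.

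First I would verify that Assumption \textbf{(${\rm \bf W^*}$)} as stated implies the hypotheses of \cite[Theorem~2]{FKZ}. This amounts to checking that the constant correspondence $x\mapsto\A$ meets whatever regularity (measurability and semi-continuity of the graph) is required there; since the graph $\X\times\A$ is closed and the constant correspondence is trivially both upper and lower semi-continuous, this verification is routine. Once the hypotheses are matched, conclusions (i)--(v) transfer verbatim: the lower semi-continuity of $v_{t,\alpha}$ and $v_\alpha$, convergence of value iterations, the finite- and infinite-horizon optimality equations (\ref{eq433a}) and (\ref{eq5aa}), the Borel-measurability of the graphs ${\rm Gr}(A_{t,\alpha})$ and ${\rm Gr}(A_\alpha)$ together with the compactness of the nonempty optimal-action sets, and the existence of optimal Markov finite-horizon and stationary infinite-horizon policies.

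For part (vi) the argument is separate and rests on the cited Feinberg and Lewis~\cite[Proposition 3.1(iv)]{FL}. Here I would note that inf-compactness of $c$ on $\X\times\A$ is strictly stronger than $\K$-inf-compactness, so Assumption \textbf{(${\rm \bf W^*}$)} continues to hold and parts (i)--(v) remain available; the added inf-compactness then propagates through the value-iteration recursion (\ref{eq433a}) to yield inf-compactness of each $v_{t,\alpha}$ and, via the convergence asserted in (i), of $v_\alpha$.

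The main obstacle --- really the only point that needs care --- is confirming that the abstract assumption of \cite{FKZ}, phrased for state-dependent action sets on ${\rm Gr}(A)$, specializes correctly to the constant-correspondence case without any hidden compactness requirement on the action set $\A$ itself, which may be noncompact here. Since $\K$-inf-compactness already encodes the only compactness that the cited proof uses --- compactness of the sublevel sets $\{(x,a):\,c(x,a)\le\lambda\}$ intersected with $K\times\A$ for compact $K\subseteq\X$ --- no separate compactness of $\A$ is needed, and the specialization goes through.
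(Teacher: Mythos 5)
Your proposal takes essentially the same route as the paper: the paper offers no independent proof of Theorem~\ref{teor4.3a}, but obtains it exactly as you do, by specializing \cite[Theorem~2]{FKZ} to an MDP with the constant action correspondence $x\mapsto\A$ (so that $\K$-inf-compactness of $c$ on the graph $\X\times\A$ is precisely part (i) of Assumption \textbf{(${\rm \bf W^*}$)}), and by citing Feinberg and Lewis~\cite[Proposition 3.1(iv)]{FL} for part (vi). Your checks that the constant correspondence satisfies the regularity required in \cite{FKZ} and that no compactness of $\A$ is hidden in the hypotheses are correct and consistent with the paper's implicit use of the cited results.
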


\section{Reduction of POMDPs to COMDPs and Main Results}\label{S3}
In this section we formulate the main results of the paper, Theorems~\ref{main1}, \ref{main},  and the relevant statements. These theorems provide
sufficient conditions for the existence of optimal policies for COMDPs and therefore for POMDPs with expected total costs, as well as optimality equations
and convergence of value iterations for COMDPs.  These conditions consist of two major components: the conditions for the existence of optimal policies for
MDPs and additional conditions on the POMDP.
 Theorem~\ref{main} states that the continuity
of the observation kernel $Q$ in the total variation is the
additional sufficient condition under which there is a stationary
optimal policy for the COMDP, and this policy satisfies the
optimality equations and can be found by value iterations. In
particular, the continuity of $Q$ in the total variation and the
weak continuity of $P$ imply the setwise continuity of the
stochastic kernel $R'$ defined in \eqref{3.5} and the validity of
Assumption {\bf {\bf(H)}} introduced in this section;
Theorem~\ref{t:totalvar}. These two additional properties imply the
weak continuity of the transition probability $q$ for the COMDP
(Theorem~\ref{th:contqqq2}) and eventually the
desired properties of the COMDP; Theorem~\ref{main1}. 

This section starts with the description of known results on the general reduction of a POMDP to the COMDP; Bertsekas and Shreve \cite[Section
10.3]{Bert1}, Dynkin and Yushkevich \cite[Chapter 8]{DY}, Hern\'{a}ndez-Lerma \cite[Chapter 4]{HLerma}, Rhenius \cite{Rh}, and Yushkevich \cite{Yu}.  To
simplify notations, we sometimes drop the time parameter.  Given a posterior distribution $z$ of the state $x$ at time epoch $t=0,1,\ldots$ and given an
action $a$ selected at epoch $t$, denote by $R(B\times C|z,a) $ the joint probability that the state at time $(t+1)$ belongs to the set $B\in {\mathcal
B}(\X)$ and the observation at time $t+1$ belongs to the set $C\in {\mathcal B}(\Y)$,
\begin{equation}\label{3.3}
R(B\times C|z,a):=\int_{\X}\int_{B}Q(C|a,x')P(dx'|x,a)z(dx),\ B\in
\mathcal{B}(\X),\ C\in \mathcal{B}(\Y),\ z\in\P(\X),\ a\in \A.
\end{equation}
Observe that $R$ is a stochastic kernel on $\X\times\Y$ given
${\P}(\X)\times \A$;  see Bertsekas and Shreve \cite[Section
10.3]{Bert1}, Dynkin and Yushkevich \cite[Chapter 8]{DY},
Hern\'{a}ndez-Lerma \cite[p. 87]{HLerma},  Yushkevich \cite{Yu} or
Rhenius \cite{Rh} for details.
 The  probability that the observation $y$ at time $t+1$
belongs to the set $C\in\B(\Y)$, given that at time $t$ the
posterior state probability is $z$ and selected action is $a,$ is
\begin{equation}\label{3.5}
R'(C|z,a):=\int_{\X}\int_{\X}Q(C|a,x')P(dx'|x,a)z(dx),\qquad
 C\in \mathcal{B}(\Y),\ z\in\P(\X),\ a\in
\A. \end{equation} 
Observe that $R'$ is a stochastic kernel on $\Y$ given
${\P}(\X)\times \A.$ By Bertsekas and Shreve~\cite[Proposition
7.27]{Bert1}, there exist a stochastic kernel $H$ on $\X$ given
${\P}(\X)\times \A\times\Y$ such that
\begin{equation}\label{3.4}
R(B\times C|z,a)=\int_{C}H(B|z,a,y)R'(dy|z,a),\quad B\in \mathcal{B}(\X),\  C\in \mathcal{B}(\Y),\ z\in\P(\X),\ a\in \A.
\end{equation}

The stochastic kernel $H(\,\cdot\,|z,a,y)$ defines a measurable
mapping $H:\,\P(\X)\times \A\times \Y \to\P(\X)$, where
$H(z,a,y)(\,\cdot\,)=H(\,\cdot\,|z,a,y).$ For each pair $(z,a)\in
\P(\X)\times\A$, the mapping $H(z,a,\cdot):\Y\to\P(\X)$ is defined
$R'(\,\cdot\,|z,a)$-almost surely  uniquely in $y\in\Y$; Bertsekas
and Shreve \cite[Corollary~7.27.1]{Bert1} or Dynkin and Yushkevich
\cite[Appendix 4.4]{DY}. 
For a posterior distribution $z_t\in \P(\X)$, action $a_t\in \A$, and an
observation $y_{t+1}\in\Y,$ the posterior distribution $z_{t+1}\in\P(\X)$ is
\begin{equation}\label{3.1}
z_{t+1}=H(z_t,a_t,y_{t+1}).
\end{equation}
However, the observation $y_{t+1}$ is not available in the COMDP model, and therefore $y_{t+1}$ is a random variable with the distribution
$R'(\,\cdot\,|z_t,a_t)$, and the right-hand side of (\ref{3.1}) maps $(z_t,a_t)\in \P(\X)\times\A$ to $\P(\P(\X)).$ Thus, $z_{t+1}$ is a random variable
with values in  $\P(\X)$ whose distribution is defined uniquely by the stochastic kernel
\begin{equation}\label{3.7}
q(D|z,a):=\int_{\Y}\h\{H(z,a,y)\in D\}R'(dy|z,a),\quad D\in \mathcal{B}(\P(\X)),\ z\in \P(\X),\ a\in\A;
\end{equation}
Hern\'andez-Lerma~\cite[p. 87]{HLerma}. The  particular choice of
a stochastic kernel $H$ satisfying (\ref{3.4}) does not effect the
definition of $q$ from (\ref{3.7}), since for each pair $(z,a)\in
\P(\X)\times\A$, the mapping $H(z,a,\cdot):\Y\to\P(\X)$ is defined
$R'(\,\cdot\,|z,a)$-almost surely uniquely in $y\in\Y$; Bertsekas
and Shreve \cite[Corollary~7.27.1]{Bert1}, Dynkin and Yushkevich
\cite[Appendix 4.4]{DY}.

Similar to the stochastic kernel $R$, consider a stochastic kernel
$R_0$ on $\X\times\Y$ given $\P(\X)$ defined by
\[
R_0(B\times C| p):=\int_{B}Q_0(C|x)p(dx), \qquad B\in \mathcal{B}(\X),\  C\in \mathcal{B}(\Y), \  p\in \P(\X).
\]
This kernel  can be decomposed as
\begin{equation}\label{3.8}
R_0(B\times C|p)=\int_{C}H_0(B|p,y)R_0'(dy|p), \quad  B\in \mathcal{B}(\X),\  C\in \mathcal{B}(\Y),\ p\in\P(\X),
\end{equation}
where $R_0'(C|p)=R_0(\X\times C|p)$, $C\in \mathcal{B}(\Y)$, $p\in
\P(\X)$,
is a stochastic kernel on $\Y$ given $\P(\X)$ and
 $H_0(dx|p,y)$ is a stochastic kernel on $\X$ given
$\P(\X)\times\Y$. Any initial prior distribution $p\in\P(\X)$ and
any initial observation $y_0$ define the initial posterior
distribution $z_0=H_0(p,y_0) $ on $(\X,\B(\X))$.  Similar to
(\ref{3.1}), the observation $y_0$ is not available in the COMDP
and this equation is stochastic.  In addition, $H_0(p,y)$ is
defined $R_0'(dy|p)$-almost surely uniquely in $y\in\Y$ for each
$p\in \P(\X).$

Similar to (\ref{3.7}), the  stochastic kernel
\begin{equation}\label{3.10}
q_0(D|p):=\int_{\Y}\h\{H_0(p,y)\in D\}R_0'(dy|p),\qquad D\in \mathcal{B}(\P(\X))\mbox{ and }p\in \P(\X),
\end{equation}
on $\P(\X)$ given $\P(\X)$ defines the the initial distribution on the set of posterior probabilities. Define $q_0(p)(D)=q_0(D|p),$ where $D\in
\mathcal{B}(\P(\X)).$ Then $q_0(p)$ is the initial distribution of $z_0=H_0(p,y_0)$ corresponding to the initial state distribution $p.$

The COMDP is defined as an MDP with parameters
($\P(\X)$,$\A$,$q$,$\c$), where
\begin{itemize}
\item[(i)] $\P(\X)$ is the state space; \item[(ii)] $\A$ is the
action set available at all states $z\in\P(\X)$; \item[(iii)] the
 one-step cost function $\c:\P(\X)\times\A\to\R$, defined as
\begin{equation}\label{eq:c}
\c(z,a):=\int_{\X}c(x,a)z(dx), \quad z\in\P(\X),\, a\in\A;
\end{equation}
 \item[(iv)] transition probabilities $q$ on $\P(\X)$
given $\P(\X)\times \A$ defined in (\ref{3.7}).
\end{itemize}
%
%
%

Denote by $i_t$, $t=0,1,\ldots$, a $t$-horizon history for the COMDP, also called an \textit{information vector},
\[
i_t:=(z_0,a_0,\ldots, z_{t-1}, a_{t-1}, z_t)\in I_t,\quad t=0,1,\ldots,
\]
where $z_0$ is the initial posterior distribution and $z_t\in \P(\X)$ are recursively defined by equation (\ref{3.1}),
$I_t:=\P(\X)\times(\A\times\P(\X))^t$ for all $t=0,1,\ldots$, with $I_0:=\P(\X)$. An \textit{information policy} ($I$-policy) is a policy in a  COMDP, i.e.
$I$-policy is a sequence $\delta=\{\delta_t\,:\,t=0,1,\dots\}$ such that $\delta_t(\,\cdot\,|i_t)$ is a stochastic kernel on $\A$ given $I_t$ for all
$t=0,1,\dots$; Bertsekas and Shreve \cite[Chapter~10]{Bert1}, Hern\'{a}ndez-Lerma \cite[p.~88]{HLerma}. Denote by $\triangle$ the set of all
\textit{I}-policies.  We also consider Markov $I$-policies and stationary $I$-policies.

For an $I$-policy $\delta=\{\delta_t\,:\,t=0,1,\dots\},$ define a policy $\pi^\delta=\{\pi^\delta_t\,:\,t=0,1,\dots\}$ in $\Pi$ as
\begin{equation}\label{eq:Per}
\pi_t^\delta(\,\cdot\,|h_t):=\delta_t(\,\cdot\,|i_t(h_t))\mbox{ for all }h_t\in H_t\mbox{ and }t=0,1,\dots,
\end{equation}
where $i_t(h_t)\in I_t$ is the information vector determined by the observable history $h_t$ via (\ref{3.1}). Thus $\delta$ and $\pi^\delta$ are equivalent
in the sense that $\pi_t^\delta$ assigns the same conditional probability on $\A$ given the observed history $h_t$ as
 $\delta_t$ for the history $i_t(h_t)$.
  If $\delta$ is an optimal policy for the COMDP then $\pi^\delta$
is an optimal policy for the POMDP.  This follows from the facts that $V_{t,\alpha}(p)={\bar v}_{t,\alpha}(q_0(p))$, $t=0,1,\ldots,$ and
$V_{\alpha}(p)={\bar v}_{\alpha}(q_0(p))$; Hern\'{a}ndez-Lerma \cite[p.~89]{HLerma} and references therein. Let $z_t(h_t)$ be the last element of the
information vector $i_t(h_t).$  With a slight abuse of notations, by using the same notations for a measure concentrated at a point and a function at this
point, if $\delta$ is Markov then \eqref{eq:Per} becomes $\pi_t^\delta(h_t)=\delta_t(z_t(h_t))$
 and if $\delta$ is stationary then
$\pi_t^\delta(h_t)=\delta(z_t(h_t)),$ $ t=0,1,\ldots\ .$

Thus, an optimal policy  for a COMDP  defines an optimal
policy for the POMDP. 
However, very little is known for the conditions on POMDPs that lead
to the existence of optimal policies for the corresponding COMDPs.
For the COMDP, Assumption \textbf{(${\rm \bf W^*}$)}
has the following form:

(i)  $\c$ is $\K$-inf-compact on $\P(\X)\times\A$;


(ii) the transition probability  $q(\,\cdot\,|z,a)$ is weakly
continuous in $(z,a)\in \P(\X)\times\A$.

Recall that the notation $\bar v$ has been reserved for the expected
total costs for COMDPs. The following theorem follows directly from
Theorem~\ref{teor4.3a} applied to the COMDP $(\P(\X),\A,q,\c)$.

\begin{theorem}
 \label{teor4.3} Let either Assumption
{\rm{\bf({\bf D})}} or Assumption {\rm\bf({\bf P})} hold. If the
COMDP $(\P(\X),\A,q,\c)$ satisfies {\rm Assumption \textbf{(${\rm
\bf W^*}$)}},    then:

{(i}) the functions ${\bar v}_{t,\alpha}$, $t=0,1,\ldots$, and
${\bar v}_\alpha$ are lower semi-continuous on $\P(\X)$, and
${\bar v}_{t,\alpha}(z)\to 
 {\bar v}_\alpha (z)$ as $t \to \infty$ for all
$z\in \P(\X);$

{(ii)} for each $z\in \P(\mathbb{X})$ and $t=0,1,...,$
\begin{equation}\label{eq433}
\begin{aligned}
&\qquad\qquad{\bar v}_{t+1,\alpha}(z)=\min\limits_{a\in \A}\left\{\c(z,a)+\alpha \int_{\P(\X)} {\bar v}_{t,\alpha}(z')q(dz'|z,a)\right\}=
\\ &\min\limits_{a\in \A}\left\{\int_{\X}c(x,a)z(dx) +\alpha \int_{\X}\int_{\X}\int_{\Y} {\bar v}_{t,\alpha}(H(z,a,y)) Q(dy|a,x')P(dx'|x,a)z(dx) \right\},
\end{aligned}
\end{equation}
where ${\bar v}_{0,\alpha}(z)=0$ for all $z\in \P(\X)$, and the
nonempty sets
\[
A_{t,\alpha}(z):=\left\{a\in \A:\,{\bar v}_{t+1,\alpha}(z)=\c(z,a)+\alpha \int_{\P(\X)} {\bar v}_{t,\alpha}(z')q(dz'|z,a) \right\},\quad z\in \P(\X),\
t=0,1,\ldots,
\]
satisfy the following properties: (a) the graph ${\rm Gr}(A_{t,\alpha})=\{(z,a):\, z\in\P(\X), a\in A_{t,\alpha}(z)\}$, $t=0,1,\ldots,$ is a Borel subset
of $\P(\X)\times \mathbb{A}$, and (b) if ${\bar v}_{t+1,\alpha}(z)=+\infty$, then $A_{t,\alpha}(z)=\A$ and, if ${\bar v}_{t+1,\alpha}(z)<+\infty$, then
$A_{t,\alpha}(z)$ is compact;

{(iii)} for each $T=1,2,\ldots$, there exists an optimal Markov
$T$-horizon $I$-policy $(\phi_0,\ldots,\phi_{T-1})$, and if for a
$T$-horizon Markov $I$-policy $(\phi_0,\ldots,\phi_{T-1})$ the
inclusions $\phi_{T-1-t}(z)\in A_{t,\alpha}(z)$, $z\in\P(\X),$
$t=0,\ldots,T-1,$ hold, then this $I$-policy is $T$-horizon
optimal;

{(iv)} for  each $z\in
\P(\X)$ 
\begin{equation}\label{eq5a}
\begin{aligned}
 &\qquad\qquad{\bar
v}_{\alpha}(z)=\min\limits_{a\in \A}\left\{\c(z,a)+\alpha\int_{\P(\X)} {\bar
v}_{\alpha}(z')q(dz'|z,a)\right\}=\\
&\min\limits_{a\in \A}\left\{\int_{\X}c(x,a)z(dx) +\alpha
\int_{\X}\int_{\X}\int_{\Y} {\bar v}_{\alpha}(H(z,a,y))
Q(dy|a,x')P(dx'|x,a)z(dx) \right\},\
\end{aligned}
\end{equation}
and the nonempty sets
\[
A_{\alpha}(z):=\left\{a\in \A:\,{\bar
v}_{\alpha}(z)=\c(z,a)+\alpha\int_{\P(\X)} {\bar
v}_{\alpha}(z')q(dz'|z,a) \right\},\quad z\in \P(\X),
\]
satisfy the following properties: (a) the graph ${\rm
Gr}(A_{\alpha})=\{(z,a):\, z\in\P(\X), a\in \A_\alpha(z)\}$ is a
Borel subset of $\P(\X)\times \mathbb{A}$, and (b) if ${\bar
v}_{\alpha}(z)=+\infty$, then $A_{\alpha}(z)=\A$ and, if ${\bar
v}_{\alpha}(z)<+\infty$, then $A_{\alpha}(z)$ is compact.

{(v)} for infinite horizon problems there exists a stationary
discount-optimal $I$-policy $\phi_\alpha$, and a stationary
$I$-policy $\phi_\alpha^{*}$ is optimal if and only if
$\phi_\alpha^{*}(z)\in A_\alpha(z)$ for all $z\in \P(\X).$

{(vi)}  if $\c$ is inf-compact on $\P(\X)\times\A$, then the functions ${\bar v}_{t,\alpha}$, $t=1,2,\ldots$, and ${\bar v}_\alpha$ are inf-compact on
$\P(\X)$.
\end{theorem}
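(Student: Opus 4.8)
The plan is to recognize that the COMDP $(\P(\X),\A,q,\c)$ is itself an MDP of the form treated in Theorem~\ref{teor4.3a}, and to apply that theorem directly, since its hypotheses are exactly what is assumed here. First I would verify that the COMDP fits the abstract framework: the state space must be a Borel subset of a Polish space and the action set must be the same at every state. The action set $\A$ is, by construction, available at all $z\in\P(\X)$ and is a Borel subset of a Polish space by the standing hypotheses on the POMDP. Since $\X$ is a Borel subset of a Polish space, the set $\P(\X)$, endowed with the topology of weak convergence, is again (homeomorphic to) a Borel subset of a Polish space; see Parthasarathy~\cite[Chapter~II]{Part} and Bertsekas and Shreve~\cite[Chapter~7]{Bert1}. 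Thus $\P(\X)$ is an admissible state space. Next, because $c$ is bounded below (resp.\ nonnegative), so is $\c(z,a)=\int_\X c(x,a)z(dx)$, and hence Assumption {\rm\textbf{(D)}} (resp.\ {\rm\textbf{(P)}}) for the underlying model transfers to the COMDP. Finally, the hypothesis that the COMDP satisfies Assumption \textbf{(${\rm \bf W^*}$)} is precisely the pair of conditions required by Theorem~\ref{teor4.3a} for $(\P(\X),\A,q,\c)$: $\K$-inf-compactness of $\c$ and weak continuity of $q$.

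Applying Theorem~\ref{teor4.3a} to $(\P(\X),\A,q,\c)$ then yields statements (i)--(vi) verbatim, with $\X$, $v$, $c$, and $P$ replaced by $\P(\X)$, $\bar v$, $\c$, and $q$. Since a Markov (stationary) policy for the MDP $(\P(\X),\A,q,\c)$ is by definition a Markov (stationary) $I$-policy for the COMDP, the policy statements require no further argument. To obtain the second equalities in (\ref{eq433}) and (\ref{eq5a}) it remains to unfold the definitions of $\c$ and $q$. Substituting (\ref{eq:c}) gives $\c(z,a)=\int_\X c(x,a)z(dx)$. For the integral term I would use that, by (\ref{3.7}), the measure $q(\,\cdot\,|z,a)$ is the image of $R'(\,\cdot\,|z,a)$ under the map $y\mapsto H(z,a,y)$, so that the change-of-variables formula for a pushforward measure gives
\[
\int_{\P(\X)} \bar v_{t,\alpha}(z')\,q(dz'|z,a)=\int_{\Y} \bar v_{t,\alpha}(H(z,a,y))\,R'(dy|z,a).
\]
Inserting the definition (\ref{3.5}) of $R'$ and applying Tonelli's theorem then converts the right-hand side into the triple integral $\int_\X\int_\X\int_\Y \bar v_{t,\alpha}(H(z,a,y))\,Q(dy|a,x')P(dx'|x,a)z(dx)$, which is exactly the second line of (\ref{eq433}); the computation for (\ref{eq5a}) is identical with $\bar v_{t,\alpha}$ replaced by $\bar v_\alpha$.

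I do not expect a genuine conceptual obstacle: the theorem is a specialization of Theorem~\ref{teor4.3a}. The only point that needs care is that the change-of-variables and Tonelli steps be applied to functions that are Borel measurable and bounded below. Both are guaranteed: the lower semi-continuity established in part (i) makes $\bar v_{t,\alpha}$ and $\bar v_\alpha$ measurable, and the bounded-below assumption on $c$ makes them bounded below (so that, after shifting by a constant to render the integrand nonnegative, Tonelli's theorem applies and all iterated integrals in $\R$ are well defined). Everything else is a direct translation of Theorem~\ref{teor4.3a} to the state space $\P(\X)$.
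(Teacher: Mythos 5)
Your proposal is correct and matches the paper's own argument, which simply applies Theorem~\ref{teor4.3a} to the COMDP $(\P(\X),\A,q,\c)$; your additional verifications (that $\P(\X)$ is a Borel subset of a Polish space, that Assumption \textbf{(D)} or \textbf{(P)} transfers to $\c$, and the pushforward/Tonelli computation turning $\int_{\P(\X)}\bar v\,dq$ into the triple integral via $R'$ and $H$) are exactly the routine details the paper leaves implicit. Nothing further is needed.
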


Thus, in view of Theorem~\ref{teor4.3}, the important question is
under which conditions on the original POMDP,  the COMDP satisfies
the conditions under which there are optimal policies for MDPs.
Hern\'andez-Lerma~\cite[p. 90]{HLerma} provides the following
conditions for this: (a) $\A$ is compact, (b) the cost function
$c$ is bounded and continuous, (c) the transition probability
$P(dx'|x,a)$ and the observation kernel $Q(dy|a,x)$ are weakly
continuous stochastic kernels; (d) there exists a weakly
continuous stochastic kernel $H$ on $\X$ given
$\P(\X)\times\A\times\Y$ satisfying (\ref{3.4}). Consider the
following relaxed version of assumption (d) that does not require
that $H$ is continuous in $y$. We introduce this assumption,
called Assumption {\bf {\bf(H)}}, because it holds in many
important situations when a weakly continuous stochastic kernel
$H$ satisfying (\ref{3.4}) does not
exist; see Example~\ref{exa} 
and Theorem~\ref{t:totalvar}. 

\vspace{6pt}

\noindent\textbf{Assumption {\bf(H)}}. There exists a stochastic
kernel $H$ on $\X$ given $\P(\X)\times\A\times\Y$ satisfying
(\ref{3.4}) such that: if a sequence
$\{z^{(n)}\}_{n=1,2,\ldots}\subseteq\P(\X)$ converges weakly to
$z\in\P(\X)$, and a sequence $\{a^{(n)}\}_{n=1,2,\ldots}\subseteq\A$
converges to $a\in\A$ as $n\to\infty$, then there exists a
subsequence $\{(z^{(n_k)},a^{(n_k)})\}_{k=1,2,\ldots}\subseteq
\{(z^{(n)},a^{(n)})\}_{n=1,2,\ldots}$ and a measurable subset $C$ of
$\Y$ such that $R'(C|z,a)=1$ and for all $y\in C$
\begin{equation}\label{eq:ASSNH}
H(z^{(n_k)},a^{(n_k)},y)\mbox{ converges weakly to }H(z,a,y). 
\end{equation}
In other words, \eqref{eq:ASSNH} holds $R'(\,\cdot\,|z,a)$-almost
surely

\begin{theorem}\label{main1} If the following assumptions hold:
\begin{enumerate}[(a)]
\item  either Assumption {\rm \bf({\bf D})} or Assumption
{\rm\bf({\bf P})} holds; \item  the function $c$ is $\K$-inf-compact on $\X\times\A$;  
%
\item  either

(i) the stochastic kernel $R'(dy|z,a)$ on $\Y$ given
$\P(\X)\times\A$ is setwise continuous and Assumption {\rm\bf({\bf H})}
holds,

\noindent or

(ii) the stochastic kernels $P(dx'|x,a)$ on $\X$ given
$\X\times\A$ and $Q(dy|a,x)$ on $\Y$ given $\A\times\X$ are weakly
continuous and there exists a weakly continuous stochastic kernel
$H(dx|z,a,y)$ on $\X$ given $\P(\X)\times\A\times\Y$ satisfying
(\ref{3.4}),
\end{enumerate} then the COMDP $(\P(\X),\A,q,\c)$ satisfies
Assumption {\rm\bf(${\rm \bf W^*}$)} and therefore statements (i)--(vi)
of Theorem~\ref{teor4.3} hold.
\end{theorem}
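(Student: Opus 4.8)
The plan is to check that the COMDP $(\P(\X),\A,q,\c)$ satisfies the two clauses of Assumption~$(\mathbf{W}^*)$ in its COMDP form stated before Theorem~\ref{teor4.3}---that $\c$ is $\K$-inf-compact on $\P(\X)\times\A$ and that $q$ is weakly continuous on $\P(\X)\times\A$---after which Theorem~\ref{teor4.3}, whose standing hypothesis is exactly the dichotomy~(a) between Assumptions~$(\mathbf{D})$ and $(\mathbf{P})$, yields statements~(i)--(vi). It is worth separating the two clauses at the outset: clause~(i) will be obtained from hypothesis~(b) alone, while clause~(ii) is precisely what the two alternatives of hypothesis~(c) are designed to supply.

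For the $\K$-inf-compactness of $\c$ I would invoke Theorem~\ref{th:wstar}: since $c$ is $\K$-inf-compact on $\X\times\A$ by~(b), the integrated cost $\c(z,a)=\int_\X c(x,a)z(dx)$ is $\K$-inf-compact on $\P(\X)\times\A$. The substance of this transfer is the joint lower semicontinuity of $(z,a)\mapsto\c(z,a)$ (a Fatou-type inequality for a weakly convergent sequence $z^{(n)}$ combined with the lower semicontinuity of $c$) together with the action-inf-compactness clause of $\K$-inf-compactness; both are delivered by Theorem~\ref{th:wstar}, and neither uses the transition law $P$, so this step is insensitive to which branch of~(c) holds.

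The weak continuity of $q$ is the real content, and here I would argue by cases. Under~(c)(i) the setwise continuity of $R'$ and Assumption~$(\mathbf{H})$ are exactly the hypotheses of Theorem~\ref{th:contqqq2}, which returns the weak continuity of $q$ directly. Under~(c)(ii) I would first note that joint weak continuity of $H$ in $(z,a,y)$ is stronger than Assumption~$(\mathbf{H})$: for \emph{every} $y$ one has $H(z^{(n)},a^{(n)},y)\to H(z,a,y)$ weakly as $(z^{(n)},a^{(n)})\to(z,a)$, so \eqref{eq:ASSNH} holds with $C=\Y$ and the full sequence. Weak continuity of $P$ and $Q$ does, however, give only weak continuity of $R'$, not the setwise continuity needed by Theorem~\ref{th:contqqq2}; so in this branch I would run the classical argument instead, writing $\int_{\P(\X)}f\,dq(\,\cdot\,|z^{(n)},a^{(n)})=\int_\Y f(H(z^{(n)},a^{(n)},y))\,R'(dy|z^{(n)},a^{(n)})$ for bounded continuous $f$ on $\P(\X)$ and passing to the limit via the continuous convergence of the integrands $y\mapsto f(H(z^{(n)},a^{(n)},y))$ against the weakly convergent measures $R'(\,\cdot\,|z^{(n)},a^{(n)})$.

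The single real obstacle lies in Theorem~\ref{th:contqqq2}, i.e. in branch~(c)(i). There one must justify $\int_\Y f(H(z^{(n_k)},a^{(n_k)},y))\,R'(dy|z^{(n_k)},a^{(n_k)})\to\int_\Y f(H(z,a,y))\,R'(dy|z,a)$ when the integrands converge only pointwise and only $R'(\,\cdot\,|z,a)$-almost surely (Assumption~$(\mathbf{H})$) while the measures converge merely setwise. This demands a dominated-convergence statement for setwise-convergent measures carrying a varying, merely pointwise-convergent integrand, and it is exactly here that setwise---rather than weak---continuity of $R'$ is indispensable: because $H$ need not be continuous in $y$, one cannot upgrade the pointwise convergence of the integrands to continuous convergence and pair it with weak convergence of the measures, which is precisely the shortcut available in branch~(c)(ii). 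Once the weak continuity of $q$ is secured, the $\K$-inf-compactness transfer and the closing appeal to Theorem~\ref{teor4.3} are routine.
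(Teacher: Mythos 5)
Your proposal is correct and follows essentially the same route as the paper, whose proof of Theorem~\ref{main1} is exactly the combination you describe: Theorem~\ref{th:wstar} for the $\K$-inf-compactness of $\c$, Theorem~\ref{th:contqqq2} for the weak continuity of $q$ under either branch of hypothesis~(c), and then Theorem~\ref{teor4.3}. Your sketch of the two branches inside Theorem~\ref{th:contqqq2} also matches the paper's proof in substance --- the paper handles branch~(c)(ii) via the portmanteau characterization and the generalized Fatou Lemma~\ref{lem:F}(ii) rather than test functions, and handles branch~(c)(i) by a subsequence-contradiction argument using Lemma~\ref{lem:F}(i) for setwise convergent measures, which is precisely the bounded-convergence mechanism you identify as the key obstacle.
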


\begin{remark}
Throughout this paper we follow the terminology according to which
finite sets are countable. If $\Y$ is countable, then equation
(\ref{eq433}) transforms into
\[
{\bar v}_{t+1,\alpha}(z)= \min\limits_{a\in
\A}\left\{\int_{\X}c(x,a)z(dx) +\alpha \sum\limits_{y\in \Y} {\bar
v}_{t,\alpha}(H(z,a,y)) R'(y|z,a) \right\},\quad z\in
\P(\mathbb{X}),\ t=0,1,...,
\]
and equation (\ref{eq5a}) transforms into
\[
{\bar v}_{\alpha}(z)= \min\limits_{a\in
\A}\left\{\int_{\X}c(x,a)z(dx) +\alpha \sum\limits_{y\in \Y} {\bar
v}_{\alpha}(H(z,a,y)) R'(y|z,a) \right\},\quad z\in
\P(\mathbb{X}).
\]
\end{remark}

Theorem~\ref{main1} follows from Theorems~\ref{teor4.3},
\ref{th:wstar}, and \ref{th:contqqq2}. In particular,
Theorem~\ref{th:wstar} implies   that if Assumption ({\bf D}) or
({\bf P}) holds for a POMDP, then it also holds  for the
corresponding
COMDP. 

%
\begin{theorem}\label{th:wstar}
If the function $c:\X\times\A\to \R$ is bounded below and
$\K$-inf-compact  on $\X\times\A$, then the cost function
$\c:\P(\X)\times\A\to\R$ defined for the COMDP in (\ref{eq:c}) is
bounded from below by the same constant as $c$ and
$\K$-inf-compact on $\P(\X)\times\A$.
\end{theorem}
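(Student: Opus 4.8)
The plan is to verify that $\c$ satisfies the two conditions characterizing $\K$-inf-compactness for bounded-below functions, namely (a) lower semi-continuity and (b) the action limit-point property recalled just before Assumption (${\rm \bf W^*}$); see \cite[Lemma 2.5]{JMAA}. The boundedness assertion is immediate: if $c(x,a)\ge -M$ on $\X\times\A$, then for every $z\in\P(\X)$ and $a\in\A$ we have $\c(z,a)=\int_{\X}c(x,a)z(dx)\ge -M$, since $z$ is a probability measure, so the same constant works. Because $\P(\X)$ is a separable metric space and $\A$ is metric, it then remains to establish (a) and (b) for $\c$ on $\P(\X)\times\A$.

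For (a), suppose $z^{(n)}\to z$ weakly in $\P(\X)$ and $a^{(n)}\to a$ in $\A$. Since $c$ is $\K$-inf-compact it is lower semi-continuous and bounded below, so $\liminf_{n}c(x^{(n)},a^{(n)})\ge c(x,a)$ whenever $x^{(n)}\to x$ in $\X$. A generalized Fatou lemma for weakly converging probabilities then yields
\[
\liminf_{n\to\infty}\c(z^{(n)},a^{(n)})=\liminf_{n\to\infty}\int_{\X}c(x,a^{(n)})z^{(n)}(dx)\ge \int_{\X}c(x,a)z(dx)=\c(z,a),
\]
which is exactly the lower semi-continuity of $\c$. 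Equivalently, one may write the bounded-below lower semi-continuous $c$ as an increasing limit of bounded continuous functions and combine monotone convergence with the joint continuity of $(z,a)\mapsto\int_{\X}f(x,a)z(dx)$ for bounded continuous $f$.

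For (b), let $z^{(n)}\to z$ weakly with $z\in\P(\X)$, and let $\{a^{(n)}\}\subseteq\A$ satisfy $\c(z^{(n)},a^{(n)})\le\lambda<\infty$ for all $n$; I must exhibit a limit point $a\in\A$ of $\{a^{(n)}\}$. The idea is to convert this integral bound into a pointwise bound on one fixed compact set and then invoke the $\K$-inf-compactness of $c$. Since $\X$ is a Borel subset of a Polish space, the limit $z$ is tight; hence, by Le Cam's theorem, weak convergence $z^{(n)}\to z$ forces $\{z^{(n)}\}$ to be uniformly tight in $\X$. Taking tightness level $\tfrac12$, fix a compact $K\subseteq\X$ with $z^{(n)}(K)\ge\tfrac12$ for all $n$. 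Using $c\ge -M$, the bound $\int_{\X}c(x,a^{(n)})z^{(n)}(dx)\le\lambda$ gives $\int_{K}c(x,a^{(n)})z^{(n)}(dx)\le\lambda+M$; as $c(\,\cdot\,,a^{(n)})$ is lower semi-continuous it attains its minimum over $K$ at some $x_n$, and from $c(x_n,a^{(n)})\,z^{(n)}(K)\le\int_{K}c(x,a^{(n)})z^{(n)}(dx)$ together with $z^{(n)}(K)\ge\tfrac12$ we obtain a finite upper bound on $c(x_n,a^{(n)})$ uniform in $n$. Since $K$ is compact, a subsequence $x_{n_j}\to x^{\ast}\in K\subseteq\X$, while $\{c(x_{n_j},a^{(n_j)})\}$ stays bounded above; condition (b) for $c$ then supplies a limit point $a\in\A$ of $\{a^{(n_j)}\}$, hence of $\{a^{(n)}\}$, completing the proof.

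The main obstacle is the passage from the integral inequality to a pointwise bound on a \emph{compact} subset of $\X$. A set on which $z^{(n)}$ carries mass $\ge\tfrac12$ obtained naively from the portmanteau theorem (an open neighborhood of a compact set) need not be relatively compact in $\X$, so the minimizers $x_n$ could escape toward the boundary $\bar{\X}\setminus\X$ and lose any limit point in $\X$, which would break the final appeal to the $\K$-inf-compactness of $c$. This is precisely why the uniform tightness of $\{z^{(n)}\}$ \emph{within} $\X$ — which hinges on the limit $z$ belonging to $\P(\X)$ and being tight, via Le Cam's theorem — is the crucial step rather than a routine estimate.
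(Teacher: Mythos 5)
Your boundedness claim and your verification of lower semi-continuity (condition (a)) are correct and coincide with the paper's own route: the paper isolates the latter as Lemma~\ref{lem:lsc}, proved exactly by the generalized Fatou Lemma~\ref{lem:F}(ii). The genuine gap is in your verification of condition (b), at the step ``by Le Cam's theorem, weak convergence $z^{(n)}\to z$ forces $\{z^{(n)}\}$ to be uniformly tight in $\X$.'' Le Cam's theorem (see, e.g., Bogachev, \emph{Measure Theory~II}, Theorem~8.6.2) requires the underlying metric space to be \emph{complete}; here $\X$ is only a Borel subset of a Polish space, and such a subspace is Polish in its given topology only when it is $G_\delta$. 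The statement you use is false in the paper's generality: take $\X=\mathbb{Q}$ (an $F_\sigma$, not $G_\delta$, subset of $\mathbb{R}$). Every Borel probability measure on $\mathbb{Q}$ is purely atomic and hence tight, so your hypothesis ``the limit is tight'' is automatic; yet by Preiss's classical example (\emph{Metric spaces in which Prohorov's theorem is not valid}, 1973) there exist weakly convergent sequences in $\P(\mathbb{Q})$ that are \emph{not} uniformly tight. Consequently a single compact $K$ with $z^{(n)}(K)\ge\tfrac12$ for all $n$ need not exist, and the rest of your argument (minimizers $x_n\in K$, a convergent subsequence in $K$, then condition (b) for $c$) has nothing to stand on. You correctly identified the boundary-escape danger in your closing paragraph, but the theorem you invoke to neutralize it is unavailable without completeness; embedding into the ambient Polish space does not help either, since the compact set produced there meets $\X$ in a set that need not be compact.

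The paper closes exactly this hole without any tightness argument: it applies the same generalized Fatou Lemma~\ref{lem:F}(ii) you used for part (a), but now to $f^{(n)}(x)=c(x,a^{(n)})$, obtaining
\[
\int_{\X}\underline{c}(x)\,z(dx)\le \liminf_{n\to\infty}\int_{\X}c(x,a^{(n)})\,z^{(n)}(dx)\le\lambda,
\qquad \underline{c}(x):=\liminf_{y\to x,\,n\to\infty}c(y,a^{(n)}).
\]
Since $z$ is a probability measure, there exists $x^{(0)}\in\X$ with $\underline{c}(x^{(0)})\le\lambda$; by the definition of the lower limit there are a subsequence $\{a^{(n_k)}\}$ and points $y^{(k)}\to x^{(0)}$ in $\X$ with $c(y^{(k)},a^{(n_k)})\le\lambda+1$, and then condition (b) for the $\K$-inf-compact function $c$ yields a limit point $a\in\A$ of $\{a^{(n_k)}\}$, hence of $\{a^{(n)}\}$. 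In other words, the integral bound is converted into a pointwise bound at a single point $x^{(0)}$ — which is all condition (b) needs — rather than into a uniform bound on a compact set of positive mass. If you replace your Le Cam step by this Fatou argument, your proof becomes correct in the stated generality.
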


\begin{theorem}\label{th:contqqq2} 
%
The stochastic kernel $q(dz'|z,a)$ on $\P(\X)$ given
$\P(\X)\times\A$ is weakly continuous if 
condition (c)
from Theorem~\ref{main1} holds. 
\end{theorem}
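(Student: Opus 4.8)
The plan is to establish weak continuity of $q$ by testing against bounded continuous functions. By the definition (\ref{3.7}), for each $(z,a)$ the measure $q(\,\cdot\,|z,a)$ is the image (pushforward) of $R'(\,\cdot\,|z,a)$ under the map $y\mapsto H(z,a,y)$; hence for every bounded measurable $f:\P(\X)\to\mathbb{R}$ (extending from indicators by the standard approximation argument),
\[
\int_{\P(\X)} f(z')\,q(dz'|z,a)=\int_\Y f(H(z,a,y))\,R'(dy|z,a).
\]
Consequently, proving weak continuity of $q$ reduces to showing that for every bounded continuous $f$ the map $(z,a)\mapsto \int_\Y f(H(z,a,y))\,R'(dy|z,a)$ is continuous on $\P(\X)\times\A$; equivalently, that $\int_\Y f(H(z^{(n)},a^{(n)},y))\,R'(dy|z^{(n)},a^{(n)})\to\int_\Y f(H(z,a,y))\,R'(dy|z,a)$ whenever $z^{(n)}\to z$ weakly and $a^{(n)}\to a$. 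I would abbreviate $g_n(y):=f(H(z^{(n)},a^{(n)},y))$ and $g(y):=f(H(z,a,y))$; these are uniformly bounded by $\sup|f|$, so the whole problem reduces to convergence of integrals of uniformly bounded integrands against the measures $\mu_n:=R'(\,\cdot\,|z^{(n)},a^{(n)})$ and $\mu:=R'(\,\cdot\,|z,a)$.

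Under alternative (i) I would argue via the subsequence principle: it suffices to show that every subsequence of $\{(z^{(n)},a^{(n)})\}$ admits a further subsequence along which the integrals converge to the common limit $\int g\,d\mu$. Fix such a subsequence; Assumption \textbf{(H)} supplies a further subsequence $\{(z^{(n_k)},a^{(n_k)})\}$ and a set $C$ with $R'(C|z,a)=1$ such that $H(z^{(n_k)},a^{(n_k)},y)\to H(z,a,y)$ weakly for every $y\in C$. Since $f$ is continuous for the weak topology on $\P(\X)$, this gives $g_{n_k}\to g$ pointwise $\mu$-almost everywhere. Setwise continuity of $R'$ gives $\mu_{n_k}\to\mu$ setwise, and I would then invoke the generalized dominated convergence theorem for setwise-convergent measures (uniformly bounded integrands converging $\mu$-a.e., which I would either cite from Section~\ref{S5} or prove by an Egorov argument splitting $\Y$ into a set of uniform convergence and a remainder whose $\mu_{n_k}$-measure tends to its vanishing $\mu$-measure) to conclude $\int g_{n_k}\,d\mu_{n_k}\to\int g\,d\mu$. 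The subsequence principle then yields convergence of the full sequence.

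Under alternative (ii) no subsequences are needed. First I would record that weak continuity of $P$ and $Q$ makes $R'$ weakly continuous: writing $\int g\,dR'(\,\cdot\,|z,a)$ as an iterated integral against $Q$, then $P$, then $z$, each integration preserves joint continuity and boundedness of the integrand, so $\mu_n\to\mu$ weakly. Next, because $H$ is jointly weakly continuous and $f$ is continuous, $g_n$ converges to $g$ \emph{continuously}, i.e., $g_n(y_n)\to g(y)$ whenever $y_n\to y$ in $\Y$. With $\mu_n\to\mu$ weakly, $\{g_n\}$ uniformly bounded, and the convergence continuous, the corresponding generalized convergence theorem for weakly convergent measures gives $\int g_n\,d\mu_n\to\int g\,d\mu$ directly.

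The main obstacle, and the reason the two alternatives must be treated separately, is matching the mode of convergence of the integrands to the mode of convergence of the measures. For the setwise-convergent measures in case (i), only almost-everywhere (indeed merely subsequential almost-everywhere) pointwise convergence of $g_n$ is available from Assumption \textbf{(H)}, and this turns out to be exactly enough; but for the merely weakly convergent measures of case (ii), pointwise a.e.\ convergence of integrands is \emph{not} sufficient, and one must upgrade to continuous convergence, which is precisely what joint weak continuity of $H$ provides. Verifying these generalized convergence theorems (or locating them in Section~\ref{S5}) and checking that Assumption \textbf{(H)} delivers a.e.\ convergence along a suitable subsequence are the technical heart of the argument.
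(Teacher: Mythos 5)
Your proposal is correct and is essentially the paper's own argument: in case (i) the paper likewise uses Assumption \textbf{(H)} to extract a subsequence along which $H(z^{(n_k)},a^{(n_k)},\cdot)\to H(z,a,\cdot)$ weakly $R'(\,\cdot\,|z,a)$-almost surely and combines this with the setwise continuity of $R'$ via the setwise Fatou lemma (Lemma~\ref{lem:F}(i)), while in case (ii) it uses the weak continuity of $R'$ together with the continuous convergence supplied by the weak continuity of $H$ and the Fatou lemma for weakly converging measures (Lemma~\ref{lem:F}(ii)). The only cosmetic difference is that the paper tests weak convergence on open sets $D$ (proving lower semi-continuity of $q(D|\,\cdot\,,\,\cdot\,)$ by a contradiction-and-subsequence argument), so it needs only the one-sided Fatou inequalities applied to indicator functions, whereas your bounded-continuous-function formulation requires the two-sided dominated-convergence variants, which, as you indicate, do follow from the same lemmas applied to $M\pm g_n$.
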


The following theorem provides sufficient conditions for the
existence of optimal policies for the COMDP and therefore for the
POMDP in terms of the initial parameters of the POMDP.

\begin{theorem}\label{main}
Let assumptions (a) and (b) of Theorem~\ref{main1} hold,  the
stochastic kernel $P(dx'|x,a)$ on $\X$ given $\X\times\A$ be
weakly continuous, and  the stochastic kernel $Q(dy|a,x)$ on $\Y$
given $\A\times\X$ be continuous in  the total  variation. Then
the COMDP $(\P(\X),\A,q,\c)$ satisfies {Assumption {\rm\bf(${\rm
\bf W^*}$)}} and therefore statements (i)--(vi) of
Theorem~\ref{teor4.3} hold.
\end{theorem}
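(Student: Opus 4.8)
The plan is to obtain the assertion as a special case of Theorem~\ref{main1}. Conditions (a) and (b) of Theorem~\ref{main1} are assumed here verbatim, so it suffices to verify condition (c)(i): that weak continuity of $P$ together with continuity of $Q$ in the total variation imply both setwise continuity of the stochastic kernel $R'$ of \eqref{3.5} and the validity of Assumption \textbf{(H)}. Theorem~\ref{main1} then yields Assumption \textbf{(${\rm \bf W^*}$)} for the COMDP (its part~(i) coming from Theorem~\ref{th:wstar} and the weak continuity of $q$ from Theorem~\ref{th:contqqq2}) and hence statements (i)--(vi) of Theorem~\ref{teor4.3}. Throughout, I fix sequences $z^{(n)}\to z$ weakly in $\P(\X)$ and $a^{(n)}\to a$ in $\A$, and write $\mu_{z,a}(dx'):=\int_\X P(dx'|x,a)\,z(dx)$ for the predictive state distribution, so that $R'(\,\cdot\,|z,a)=\int_\X Q(\,\cdot\,|a,x')\,\mu_{z,a}(dx')$ by \eqref{3.5}.

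The engine of the whole argument is a single lemma: if $\mu^{(n)}\to\mu$ weakly in $\P(\X)$ and $Q$ is continuous in the total variation, then the mixtures $\int_\X Q(\,\cdot\,|a^{(n)},x')\,\mu^{(n)}(dx')$ converge to $\int_\X Q(\,\cdot\,|a,x')\,\mu(dx')$ in the total variation. I would prove this by viewing $x'\mapsto Q(\,\cdot\,|a,x')$ as a continuous map from $\X$ into the Banach space of finite signed measures on $\Y$ with the total-variation norm; on a compact set its range is norm-compact, so it is uniformly approximable by continuous ``simple'' measure-valued maps, for which weak convergence of $\mu^{(n)}$ suffices, while Prokhorov tightness of $\{\mu^{(n)}\}$ controls the tails and a uniform-on-compacta estimate of $\|Q(\,\cdot\,|a^{(n)},x')-Q(\,\cdot\,|a,x')\|_{\rm TV}$ absorbs the change in the action. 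Applying this with $\mu^{(n)}=\mu_{z^{(n)},a^{(n)}}$, which converges weakly to $\mu_{z,a}$ because $P$ is weakly continuous, shows that $R'(\,\cdot\,|z^{(n)},a^{(n)})\to R'(\,\cdot\,|z,a)$ in the total variation, which is stronger than, and in particular implies, the required setwise continuity of $R'$.

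For Assumption \textbf{(H)} I would make the Bayesian structure explicit. Total-variation continuity of $Q$ furnishes a common $\sigma$-finite reference measure $\eta$ on $\Y$ dominating every $Q(\,\cdot\,|a,x')$, with densities $q(y|a,x')$ depending on $(a,x')$ continuously in $L^1(\eta)$; then $R'(\,\cdot\,|z,a)$ has $\eta$-density $D_{z,a}(y)=\int_\X q(y|a,x')\mu_{z,a}(dx')$ and the disintegration \eqref{3.4} is given by the explicit Bayes formula $\int_\X u\,dH(\,\cdot\,|z,a,y)=D_{z,a}^{-1}(y)\int_\X u(x')q(y|a,x')\mu_{z,a}(dx')$ on $\{D_{z,a}>0\}$, a set of full $R'(\,\cdot\,|z,a)$-measure. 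For any bounded continuous $u$ the numerator and denominator are $\eta$-densities of mixtures of the form handled by the lemma, so both converge to their limits in $L^1(\eta)$; passing to a subsequence gives $\eta$-almost everywhere, hence $R'(\,\cdot\,|z,a)$-almost everywhere, convergence of the ratio, i.e.\ of $\int_\X u\,dH(\,\cdot\,|z^{(n_k)},a^{(n_k)},y)$ to $\int_\X u\,dH(\,\cdot\,|z,a,y)$. Finally, fixing a countable convergence-determining family $\{u_m\}$ of bounded continuous functions on $\X$ and diagonalizing over $m$ produces a single subsequence along which \eqref{eq:ASSNH} holds for $R'(\,\cdot\,|z,a)$-almost every $y$, which is precisely Assumption \textbf{(H)}.

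The main obstacle is the mixture lemma, i.e.\ the upgrade of \emph{weak} convergence of the predictive distributions to \emph{total-variation} convergence of the observation distributions after passing through the kernel $Q$. This is exactly where continuity of $Q$ in the total variation is indispensable and cannot be weakened to setwise continuity (cf.\ Example~\ref{exa1}): the proof needs the norm-compactness of $\{Q(\,\cdot\,|a,x'):x'\in K\}$ for compact $K\subseteq\X$, which setwise continuity does not provide. Once the lemma is established, the construction of $\eta$ and the densities, the $L^1(\eta)$ convergence of the Bayes numerators and denominators, the extraction of an almost-everywhere convergent subsequence, and the diagonalization are all routine.
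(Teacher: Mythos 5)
Your top-level plan coincides with the paper's own route: Theorem~\ref{main} is deduced there from Theorem~\ref{th:wstar} together with Theorem~\ref{t:totalvar}, which asserts exactly what you set out to verify, namely that weak continuity of $P$ plus total-variation continuity of $Q$ give (total-variation, hence setwise) continuity of $R'$ and Assumption \textbf{(H)}, so that condition (c)(i) of Theorem~\ref{main1} applies. Within that plan, your verification of Assumption \textbf{(H)} is genuinely different from the paper's and is sound in outline: a dominating probability measure $\eta$ does exist (the TV-continuous map $(a,x')\mapsto Q(\,\cdot\,|a,x')$ on the separable space $\A\times\X$ has norm-separable range, and a countable dense subfamily dominates all kernels), the Bayes ratio is a valid version of $H$ in \eqref{3.4}, and the passage from $L^1(\eta)$-convergence of numerators and denominators to an $R'(\,\cdot\,|z,a)$-a.e.\ convergent subsequence, diagonalized over a countable convergence-determining family, delivers precisely \eqref{eq:ASSNH}. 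The paper instead works without any dominating measure, through uniform set-wise estimates (Lemma~\ref{corRsetmin}), a convergence-in-probability extraction (Theorem~\ref{setwise}), and a diagonal argument over a countable algebra of sets (Lemma~\ref{R-H}); your density formulation is arguably more transparent where it works.

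The genuine gap is in your ``mixture lemma,'' on which everything else rests. You control the tails by ``Prokhorov tightness of $\{\mu^{(n)}\}$,'' but weak convergence implies uniform tightness only on Polish spaces, whereas in this paper $\X$ is an arbitrary Borel subset of a Polish space: by a theorem of Preiss, already on the rationals $\mathbb{Q}\subset\mathbb{R}$ (an $F_\sigma$, hence Borel, subset) there exist weakly convergent sequences of probability measures that are not uniformly tight. The predictive measures $\mu_{z^{(n)},a^{(n)}}$ inherit no tightness from the merely weakly convergent $z^{(n)}$, so the step ``tightness absorbs the mass off the compact set $K$'' fails, and with it both your TV-continuity of $R'$ and the $L^1(\eta)$ convergences feeding the Bayes argument. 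The lemma's \emph{statement} is nonetheless true in the paper's generality; the paper proves it (Theorem~\ref{kern}, Lemma~\ref{corRsetmin}, Corollary~\ref{corR'}) without tightness by a different device: cover an open set by balls whose boundaries are $\Psi(\,\cdot\,|s_1)$-null (Lindel\"of plus Parthasarathy's continuity-set argument), partition it into continuity sets $A^{(j)}$, apply Parthasarathy's uniform-convergence theorem for equicontinuous families to each normalized piece, and sum using $\sum_{j}\Psi(A^{(j)}|s_1)\le 1$ --- that is, $\sigma$-additivity of the limit measure replaces tightness of the sequence. To repair your proof you must either import such a continuity-set partition argument or restrict $\X$ to be Polish (e.g., closed or $G_\delta$ in the ambient space), which is strictly less general than the theorem being proved.
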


Theorem~\ref{main} follows from Theorem~\ref{th:wstar} and from
the following statement.

\begin{theorem}\label{t:totalvar}
The weak continuity of the stochastic kernel $P(dx'|x,a)$ on $\X$ given $\X\times\A$ and continuity in the total variation of the stochastic kernel
$Q(dy|a,x)$ on $\Y$ given $\A\times\X$ imply that condition (i) from Theorem~\ref{main1} holds (that is, $R'$ is setwise continuous and
Assumption~{\rm\bf({\bf H})} holds) and therefore the stochastic kernel $q(dz'|z,a)$ on $\P(\X)$ given $\P(\X)\times\A$ is weakly continuous.
\end{theorem}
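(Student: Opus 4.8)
The plan is to verify the two properties comprising condition~(i) of Theorem~\ref{main1}---that $R'$ is setwise continuous and that Assumption~{\bf(H)} holds---after which the weak continuity of $q$ is immediate from Theorem~\ref{th:contqqq2}. Throughout, fix sequences $z^{(n)}\to z$ weakly in $\P(\X)$ and $a^{(n)}\to a$ in $\A$, and introduce the predictive measure $\mu_{z,a}(dx'):=\int_\X P(dx'|x,a)\,z(dx)\in\P(\X)$, so that by (\ref{3.5}) one has $R'(C|z,a)=\int_\X Q(C|a,x')\,\mu_{z,a}(dx')$. The first step is to show $\mu_{z,a}$ is weakly continuous in $(z,a)$: for $f$ in the set $C_b(\X)$ of bounded continuous functions, weak continuity of $P$ makes $(x,a)\mapsto \int_\X f(x')P(dx'|x,a)$ bounded and jointly continuous, and a standard generalized convergence theorem for weak convergence (continuously converging integrands against weakly converging measures) then gives $\int_\X f\,d\mu_{z^{(n)},a^{(n)}}\to\int_\X f\,d\mu_{z,a}$, i.e. $\mu_{z^{(n)},a^{(n)}}\to\mu_{z,a}$ weakly.

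The technical core is the following claim: if $\mu_n\to\mu$ weakly, $a_n\to a$, and $f\in C_b(\X)$, then $\int_\X f(x')Q(\,\cdot\,|a_n,x')\mu_n(dx')\to\int_\X f(x')Q(\,\cdot\,|a,x')\mu(dx')$ in total variation. I would prove this by splitting the difference into the term $\int_\X f(x')[Q(\,\cdot\,|a_n,x')-Q(\,\cdot\,|a,x')]\mu_n(dx')$, whose total variation is at most $\|f\|_\infty\int_\X\|Q(\,\cdot\,|a_n,x')-Q(\,\cdot\,|a,x')\|_{TV}\mu_n(dx')$ and tends to $0$ by tightness of $\{\mu_n\}$ together with the uniform total-variation continuity of $Q$ on compact sets, and the term $\int_\X f(x')Q(\,\cdot\,|a,x')(\mu_n-\mu)(dx')$. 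Testing the latter against Borel $g:\Y\to[-1,1]$ and writing $Qg(a,x'):=\int_\Y g\,dQ(\,\cdot\,|a,x')$, the family $\{f\,Qg(a,\,\cdot\,):|g|\le1\}$ is uniformly bounded and uniformly equicontinuous (its modulus of continuity is controlled by $\|Q(\,\cdot\,|a,x')-Q(\,\cdot\,|a,x'')\|_{TV}$, uniformly in $g$), so weak convergence of $\mu_n$ is uniform over this family. Applying the claim with $f\equiv1$ gives total-variation, hence setwise, continuity of $R'$; applying it with a general $f\in C_b(\X)$ gives total-variation continuity of the numerator kernels $\Lambda_f(C|z,a):=\int_\X f(x')Q(C|a,x')\mu_{z,a}(dx')$.

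To obtain Assumption~{\bf(H)}, fix a countable family $\{f_j\}\subseteq C_b(\X)$ determining weak convergence on $\P(\X)$ (available since $\X$ is separable metric; Parthasarathy~\cite{Part}), and take the disintegration $H$ of (\ref{3.4}), which exists by Bertsekas and Shreve~\cite[Proposition~7.27]{Bert1}. For each $f$ the function $\Phi_f(y|z,a):=\int_\X f(x')H(dx'|z,a,y)$ is a version of the Radon--Nikodym derivative $d\Lambda_f(\,\cdot\,|z,a)/dR'(\,\cdot\,|z,a)$ and satisfies $|\Phi_f|\le\|f\|_\infty$. The key point---and the precise reason total-variation (not merely setwise) continuity of $Q$ is needed---is that convergence in total variation of both the numerators $\Lambda_f$ and the denominators $R'$ upgrades weak $L^1$ convergence of these densities to strong $L^1$ convergence: writing $\sigma_n=R'(\,\cdot\,|z^{(n)},a^{(n)})$ and $\sigma=R'(\,\cdot\,|z,a)$,
\[
\int_\Y\bigl|\Phi_f(y|z^{(n)},a^{(n)})-\Phi_f(y|z,a)\bigr|\,\sigma(dy)\le\|f\|_\infty\,\|\sigma_n-\sigma\|_{TV}+\|\Lambda_f(\,\cdot\,|z^{(n)},a^{(n)})-\Lambda_f(\,\cdot\,|z,a)\|_{TV}\to0.
\]
Hence $\Phi_{f_j}(\,\cdot\,|z^{(n)},a^{(n)})\to\Phi_{f_j}(\,\cdot\,|z,a)$ in $L^1(R'(\,\cdot\,|z,a))$ for every $j$; a diagonal extraction produces a subsequence $(n_k)$ and a Borel set $C\subseteq\Y$ with $R'(C|z,a)=1$ on which $\Phi_{f_j}(y|z^{(n_k)},a^{(n_k)})\to\Phi_{f_j}(y|z,a)$ for all $j$. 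Since $\{f_j\}$ determines weak convergence, this is exactly $H(z^{(n_k)},a^{(n_k)},y)\to H(z,a,y)$ weakly for every $y\in C$, which is (\ref{eq:ASSNH}).

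The main obstacle is the total-variation estimate of the technical claim, specifically the uniformity over all Borel $g:\Y\to[-1,1]$ in the $(\mu_n-\mu)$ term; this is where the uniform equicontinuity furnished by total-variation continuity of $Q$, rather than mere pointwise (setwise) continuity, is indispensable, and it is exactly the strengthening that fails under setwise continuity alone (cf. Example~\ref{exa1}). With both properties in condition~(i) established, Theorem~\ref{th:contqqq2} yields the asserted weak continuity of $q$.
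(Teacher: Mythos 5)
Your strategy is sound and reaches the theorem by a genuinely different implementation than the paper's. The paper proceeds through set-indexed equicontinuity machinery: Theorem~\ref{kern} (uniform integration of equicontinuous families against weakly continuous kernels over open sets, proved via continuity sets and a Lindel\"of cover) is applied twice in Lemma~\ref{corRsetmin} to make the families $\mathcal{R}_{\oo_1\setminus\oo_2}$ equicontinuous in $(z,a)$, with Corollary~\ref{corR'} giving total-variation continuity of $R'$; Assumption \textbf{(H)} is then extracted in Lemma~\ref{R-H} by testing $H$ on a countable algebra generated by a countable base of $\X$ (with an inclusion--exclusion step), invoking Theorem~\ref{setwise} to pass from uniform-over-sets convergence to convergence in probability of the densities, diagonalizing, and finishing with the open-set portmanteau plus countable additivity and Fatou. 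You instead test $H$ against a countable convergence-determining family $\{f_j\}\subseteq C_b(\X)$ and run a Radon--Nikodym/$L^1$ argument. Your key estimate $\int_\Y|\Phi_f^{(n)}-\Phi_f|\,d\sigma\le\|f\|_\infty\|\sigma_n-\sigma\|_{TV}+\|\Lambda_f^{(n)}-\Lambda_f\|_{TV}$ is correct (take $g$ to be the sign of $\Phi_f^{(n)}-\Phi_f$ and use that $\Phi_f^{(n)}$ is the $\sigma_n$-density of $\Lambda_f^{(n)}$, with the paper's convention that $\|\cdot\|_{TV}$ is the supremum over Borel functions into $[-1,1]$); it is the exact analogue --- in fact a mild strengthening, $L^1$ rather than in-probability --- of the paper's Theorem~\ref{setwise}. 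Using continuous test functions rather than indicators of open sets is what lets you apply Parthasarathy's uniform-convergence theorem (the same Theorem~6.8 the paper uses inside Theorem~\ref{kern}) directly, bypassing both the continuity-set decomposition and the set-algebra manipulations of Lemma~\ref{R-H}; the trade-off is that you prove total-variation continuity only of the scalar kernels $\Lambda_f$ and $R'$, which is all the $L^1$ route needs, whereas the paper's Lemma~\ref{corRsetmin} yields the stronger equicontinuity statement reused later (e.g., in Theorem~\ref{teor:Ren}). You also correctly isolate where total-variation, as opposed to setwise, continuity of $Q$ enters.

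There is, however, one genuine flaw, localized in the first term of your technical claim: you justify $\int_\X\|Q(\,\cdot\,|a_n,x')-Q(\,\cdot\,|a,x')\|_{TV}\,\mu_n(dx')\to 0$ by ``tightness of $\{\mu_n\}$.'' Uniform tightness of a weakly convergent sequence is a converse-Prokhorov property that holds on Polish spaces, but here $\X$ is only assumed to be a Borel subset of a Polish space, and such spaces need not have this property: by Preiss's theorem (1973), already for $\X=\mathbb{Q}$ there exist weakly convergent sequences in $\P(\mathbb{Q})$ that are not uniformly tight. So, as justified, this step fails in the theorem's stated generality (your uniform TV-continuity of $Q$ on compacts is fine --- a continuous map into $(\P(\Y),\|\cdot\|_{TV})$ is uniformly continuous on the compact set $K\times(\{a_n\}\cup\{a\})$ --- it is the uniform tightness that is unavailable). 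The repair is immediate with a tool already in the paper: the functions $h_n(x'):=\|Q(\,\cdot\,|a_n,x')-Q(\,\cdot\,|a,x')\|_{TV}$ are Borel (a countable supremum over a countable field generating $\B(\Y)$), bounded by $2$, and converge to $0$ continuously, i.e.\ $h_n(x'_n)\to 0$ whenever $x'_n\to x'$, by the TV-continuity of $Q$; applying the generalized Fatou Lemma~\ref{lem:F}(ii) to the nonnegative functions $2-h_n$ gives
\begin{equation*}
\limsup_{n\to\infty}\int_\X h_n(x')\,\mu_n(dx')\le\int_\X\ \slim\limits_{n\to\infty,\ x''\to x'}h_n(x'')\,\mu(dx')=0.
\end{equation*}
With that substitution your proof goes through in full generality.
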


Example~\ref{exa1} demonstrates that, if the stochastic kernel
$Q(dy|a,x)$ on $\Y$ given $\A\times\X$ is setwise continuous, then
the transition probability $q$ for the COMDP may not be weakly
continuous in $(z,a)\in\P(\X)\times\A$. In this example the state
set consists of two points. Therefore, if the stochastic kernel
$P(dx'|x,a)$ on $\X$ given $\X\times\A$ is setwise continuous (even
if it is continuous in the total variation) in $(x,a)\in\X\times\A$
then the setwise continuity of the stochastic kernel $Q(dy|a,x)$ on
$\Y$ given $\A\times\X$  is not sufficient for the weak continuity
of $q$.

\begin{corollary}\label{cor:inf-HL123} ({\rm cp. Hern\'{a}ndez-Lerma~\cite[p.
93]{HLerma})} If the stochastic kernel $P(dx'|x,a)$ on $\X$ given
$\X\times\A$ is weakly continuous, $\Y$ is countable, and for each
$y\in \Y$ the function $Q(y|a,x)$ is  continuous on $\A\times\X$,
then  the following statements hold:

(a) for each $y\in \Y$ the function $R'(y|z,a)$ is continuous on
$\P(\X)\times\A$ with respect to the topology of weak convergence
on $\P(\X),$ and Assumption {\rm\bf({\bf H})} holds;

 (b) the
stochastic kernel $q(dz'|z,a)$ on $\P(\X)$ given $\P(\X)\times\A$
is weakly continuous;

(c) if, in addition to the above conditions, assumptions (a) and
(b) from Theorem~\ref{main1} hold, then the COMDP
$(\P(\X),\A,q,\c)$ satisfies {Assumption {\rm\bf(${\rm \bf
W^*}$)}} and therefore statements (i)--(vi) of
Theorem~\ref{teor4.3} hold.
\end{corollary}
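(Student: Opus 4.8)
The plan is to deduce the corollary from Theorem~\ref{t:totalvar} and Theorem~\ref{main1}; the only genuinely new ingredient is that, on a countable observation set, pointwise continuity of the observation probabilities automatically upgrades to continuity in the total variation. I would establish this upgrade first. Fix a sequence $(a^{(n)},x^{(n)})\to(a,x)$ in $\A\times\X$. By hypothesis $Q(y|a^{(n)},x^{(n)})\to Q(y|a,x)$ for every $y\in\Y$, so the probability mass functions $Q(\,\cdot\,|a^{(n)},x^{(n)})$ converge pointwise to $Q(\,\cdot\,|a,x)$. Since all of these are probability measures on the countable set $\Y$, each of total mass $1$, Scheff\'e's lemma yields
$$\sum_{y\in\Y}\bigl|Q(y|a^{(n)},x^{(n)})-Q(y|a,x)\bigr|\to 0\qquad\text{as }n\to\infty,$$
which is exactly convergence of $Q(\,\cdot\,|a^{(n)},x^{(n)})$ to $Q(\,\cdot\,|a,x)$ in the total variation. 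Hence $Q(dy|a,x)$ is continuous in the total variation.

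With this in hand, the weak continuity of $P$ together with the total-variation continuity of $Q$ place us in the hypotheses of Theorem~\ref{t:totalvar}. That theorem delivers at once that the stochastic kernel $R'$ is setwise continuous, that Assumption {\rm\bf({\bf H})} holds, and that the transition kernel $q(dz'|z,a)$ of the COMDP is weakly continuous. The third conclusion is precisely statement (b). For statement (a), I would specialize the setwise continuity of $R'$ to the singletons $\{y\}$: whenever $z^{(n)}\to z$ weakly and $a^{(n)}\to a$, setwise convergence of $R'(\,\cdot\,|z^{(n)},a^{(n)})$ to $R'(\,\cdot\,|z,a)$ gives $R'(y|z^{(n)},a^{(n)})\to R'(y|z,a)$ for each $y\in\Y$, so each function $R'(y|z,a)$ is continuous on $\P(\X)\times\A$ in the topology of weak convergence on $\P(\X)$. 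Together with Assumption {\rm\bf({\bf H})}, already obtained, this is statement (a).

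Statement (c) is then immediate: parts (a) and (b) show that condition (i) in hypothesis (c) of Theorem~\ref{main1} holds, and by assumption so do hypotheses (a) and (b) of that theorem, so all of its hypotheses are met. Its conclusion gives that the COMDP $(\P(\X),\A,q,\c)$ satisfies Assumption {\rm\bf(${\rm \bf W^*}$)} and that statements (i)--(vi) of Theorem~\ref{teor4.3} hold.

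I anticipate no real difficulty, as the corollary essentially packages the earlier results. The one point requiring care is the passage from pointwise to total-variation continuity of $Q$: because $\Y$ may be infinite and no uniform domination is available, one must invoke the finiteness of the total mass through Scheff\'e's lemma rather than a crude dominated-convergence bound. This is precisely where the countability of $\Y$ is used.
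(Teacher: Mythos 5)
Your proposal is correct and follows essentially the same route as the paper: upgrade the pointwise continuity of $Q(y|\,\cdot\,,\,\cdot\,)$ on the countable set $\Y$ to continuity in the total variation, then obtain (a) and (b) from Theorem~\ref{t:totalvar} (with (a) read off by specializing setwise continuity of $R'$ to singletons) and (c) from Theorem~\ref{main1}, exactly as the paper does via Theorem~\ref{main}. The only difference is cosmetic: you invoke Scheff\'e's lemma for the $\ell^1$-convergence of the probability mass functions, whereas the paper gives the equivalent hands-on argument with a finite set $Y_\epsilon$ capturing all but $\epsilon$ of the mass.
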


\begin{proof}
For a    countable  $\Y$, the continuity in the total variation of
the stochastic kernel $Q(dy|x,a)$ on $\Y$ given $\A\times\X$
follows from the continuity of $Q(y|a,x)$ for each $y\in \Y$ in
$(a,x)\in \A\times\X$ and from $Q(\Y|a,x)=1$ for all $(a,x)\in
\A\times\X$. Indeed, let $(a^{(n)},x^{(n)})\to (a,x)$ as
$n\to\infty$. Since $Q(y|a,x)$ is continuous in $(a,x)\in
\A\times\X$ for each $y\in \Y$ and $Q(\Y|a,x)=1$ for all $(a,x)\in
\A\times\X$, then for any $\epsilon>0$ there exists a finite set
$Y_\epsilon\subseteq \Y$ and a natural number $N_\epsilon$ such
that $\sum_{y\in \Y\setminus Y_\epsilon} Q(\, y \,|z,a)\le
\epsilon$ and $\sum_{y\in \Y\setminus Y_\epsilon} Q(\, y
\,|z^{(n)},a^{(n)})\le 2\epsilon,$ when $n\ge N_\epsilon.$  This
and the continuity of $Q$ imply
\[
\sum_{y\in \Y} |Q(y|a^{(n)},x^{(n)})-Q(y|a,x)|\to
0\quad\mbox{as}\quad n\to\infty.
 \]
Thus, $
\sup\limits_{C\in\B(\Y)}|Q(C|a^{(n)},x^{(n)})-Q(C|a,x)|\le\sum_{y\in
\Y} |Q(y|a^{(n)},x^{(n)})-Q(y|a,x)|\to 0$ as $n\to\infty, $ and
 continuity in  the total  variation takes place. Statements (a) and (b) follow from Theorem~\ref{t:totalvar},
  and statement (c) follows from Theorem~\ref{main}.
\end{proof}

\section{Counterexamples} \label{S4}

In this section we provide three counterexamples.
Example~\ref{exa1} demonstrates that the assumption in 
Theorems~\ref{main} and \ref{t:totalvar}, that the stochastic
kernel $Q$ is continuous in the total variation,  cannot be
weakened to the assumption that $Q$ is setwise continuous.
Example~\ref{exa} shows that,  under conditions of
Corollary~\ref{cor:inf-HL123}, a weakly continuous mapping $H$
satisfying \eqref{3.1} may not exist. The existence of such a
mapping is mentioned in Hern\'{a}ndez-Lerma \cite[p.~93]{HLerma}.
Example~\ref{exa3} illustrates that  the setwise continuity of the
the stochastic kernel $R'(dy|z,a)$ on $\Y$ given $\P(\X)\times\A$
is essential in condition (i) of Theorem~\ref{main1}. Without this
assumption, Assumption {\bf {\bf(H)}} alone is not sufficient for
the weak continuity of the stochastic kernel $q(dz'|z,a)$ on
$\P(\X)$ given $\P(\X)\times\A$ and therefore for the correctness
of Theorems~\ref{main1} and \ref{th:contqqq2}.

We would like to mention
that before the authors constructed Example~\ref{exa1}, 
Huizhen Janey Yu provided them with an example when the weak
continuity of the observation kernel $Q$ is not sufficient for the
weak continuity of the stochastic probability $q(\,\cdot\,|z,a).$
In her example, $\X=\{1,2\}$, the system does not move,
$\Y=\A=[0,1]$, at state 1 the observation is 0 for any action $a$
and at state 2, under an action $a\in\A,$ the observation is
uniformly distributed on $[0,a].$ The initial belief distribution
is $z=(0.5,0.5).$

\begin{example}\label{exa1}
{\rm {\it Continuity of $Q$ in the total variation cannot be
relaxed to setwise continuity in Theorems~\ref{main} and
\ref{t:totalvar}.} Let $\X=\{1,2\}$, $\Y=[0,1]$, and
$\A=\{0\}\cup\{\frac1n :\, n=1,2,\dots\}$. The system does not
move.  This means that $P(x|x,a)=1$ for all $x=1,2$ and $a\in \A$. 
This stochastic kernel $P$ is weakly continuous and, since $\X$ is
finite, it is setwise continuous and continuous in  the total
variation.  The observation kernel $Q$ is
$Q(dy|a,1)=Q(dy|0,2)=m(dy)$, $a\in\A$, with $m$ being the Lebesgue
measure on $\Y=[0,1]$ and $Q(dy|1/n,2)=m^{(n)}(dy)$, $n=1,2,...,$
where $m^{(n)}$ is the absolutely continuous measure on $\Y=[0,1]$
with the density $f^{(n)}$,
\begin{align}\label{eqfnex1}
f^{(n)}(y)=\begin{cases} 0, &\textrm{if $2k/2^n<y<(2k+1)/2^n$ for
$k=0,1,\ldots,2^{n-1}-1$;}\\
2, &\textrm{otherwise.}
\end{cases}
\end{align}

First we show that $Q(dy|a,x)$ on $\Y$ given $\A\times\X$ is
setwise continuous in $(a,x)$. In our case, this means that the
probability distributions $Q(dy|{1}/{n},i)$ converge setwise to
$Q(dy|0,i)$ as $n\to\infty$, where $i=1,2.$  For $i=1$ this
statement is trivial, because $Q(dy|a,1)=m(dy)$ for all $a\in \A.$
For $i=2$ we need to verify that $m^{(n)}$ converge setwise to $m$
as
$n\to\infty$. According to 
Bogachev \cite[Theorem 8.10.56]{bogachev}, which is Pflanzagl's
generalization of the Fichtengolz-Dieudonn\'e-Grothendiek theorem,
measures $m^{(n)}$ converge setwise to the measure $m$, if
$m^{(n)}(C)\to m(C)$ for each open set $C$ in $[0,1]$.  Since
$m^{(n)}(0)=m(0)=m^{(n)}(1)=m(1),$ $n=1,2,\ldots,$ then
$m^{(n)}(C)\to m(C)$ for each open set $C$ in $[0,1]$ if and only
if $m^{(n)}(C)\to m(C)$ for each open set $C$ in $(0,1).$
 Choose an arbitrary open set $C$ in $(0,1).$ Then $C$ is a union of a
countable  set of open disjoint intervals $(a_i,b_i)$. Therefore, for any $\varepsilon>0$ there is a finite number $n_\varepsilon$ of open intervals
$\{(a_i,b_i):i=1,\ldots,n_\varepsilon\}$ such that $m(C\setminus C_\varepsilon)\le \varepsilon$, where $C_\varepsilon=\cup_{i=1}^{n_\varepsilon}
(a_i,b_i).$ Since $f^{(n)}\le 2$, this implies that $m^{(n)}(C\setminus C_\varepsilon)\le 2\varepsilon$ for any $n=1,2,\ldots\ .$ Since
$|m^{(n)}((a,b))-m((a,b))|\le 1/2^{n-1}, $ $n=1,2,\ldots,$ for any interval $(a,b)\subset [0,1],$  this implies that
$|m(C_\varepsilon)-m^{(n)}(C_\varepsilon)|\le \varepsilon$ if $n\ge N_\varepsilon$, where $N_\varepsilon$ is any natural number
satisfying $1/2^{N_\varepsilon-1}\le \varepsilon.$ 
Therefore, if $n\ge N_\varepsilon$ then $|m^{(n)}(C)-m(C)|\le |m^{(n)}(C_\varepsilon)-m(C_\varepsilon)|+m(C\setminus C_\varepsilon)+m^{(n)}(C\setminus
C_\varepsilon)\le 4\varepsilon.$  This implies that $m^{(n)}(C)\to m(C)$ as $n\to\infty$.  Thus $m^{(n)}$ converge setwise to $m$ as $n\to\infty$.

Second, we verify that the transition kernel $q$ does not satisfy
the weak continuity property. Consider the posterior probability
distribution $z=(z(1),z(2))=(0.5,0.5)$ of the state at the current
step. Since the system does not move, this is the prior
probability distribution at the next step. If the action $0$ is
selected at the current step then nothing new can be learned about
the state during the next step. Thus $q(z|z,0)=1$. Let $y$
be an observation at the next step, and let $D$ be the event that 
the state is 2. At the next step, the prior probability of the
event $D$ is 0.5, because $z(2)=0.5.$ Now let an action $1/n$ be
selected at the current step. The new posterior state
probabilities depend on the event $A=\{f^{(n)}(y)=2\}.$ If the
event $D$ takes place (the state is 2), then the probability of
the event $A$ is 1 and the probability of the event ${\bar
A}=\{f^{(n)}(y)=0\}$ is 0.     If the event $\bar D$ takes place
(the new state is 1), then the probabilities of the events $A$ and
$\bar A$ are 0.5. Bayes's formula implies that the posterior
probabilities   are $(1/3,2/3),$ if $f^{(n)}(y)=2,$ and $(1,0),$
if $f^{(n)}(y)=0$.  Since $f^{(n)}(2)=2$ with probability $3/4$
and $f^{(n)}(y)=0$ with probability $1/4,$ then
$q((1/3,2/3)|z,1/n)=3/4$ and $q((1,0)|z,1/n)=1/4$.  So, all the
measures $q(\,\cdot\,|z,1/n)$ are constants and they are not equal
to the measure $q(\,\cdot\,|z,0),$ which is concentrated at the
point $z=(0.5,0.5).$ Thus the transition kernel $q$ on
$\P(\X)$ given $\P(\X)\times \A$ is not weakly continuous. 
\EndPf}
\end{example}

\begin{example}\label{exa}  Under conditions of Corollary~\ref{cor:inf-HL123}
there is no weakly continuous stochastic kernel
$H(\,\cdot\,|z,a,y)$ on $\X$ given $\P(\X)\times \A\times \Y$
satisfying \eqref{3.1}. {\rm Let the state and observation spaces
 $\X=\Y=\{1,2\}$; the action space $\A=[-1,1]$; the system do not
move, that is $P(1|1,a)=P(2|2,a)=1$ for all $a\in\A;$ for each
$y\in \Y$ the observation kernel  $Q(y|a,x)$  be continuous   in
 $a\in\A$, 

\[
Q(1|a,1)=\left\{
\begin{array}{ll}
|a|,&a\in[-1,0),\\
a^2,&a\in[0,1],
\end{array}\right.\quad
Q(1|a,2)=\left\{
\begin{array}{ll}
a^2,&a\in[-1,0),\\
|a|,&a\in[0,1],
\end{array}\right.
\]
\[
Q(2|a,1)=\left\{
\begin{array}{ll}
1-|a|,&a\in[-1,0),\\
1-a^2,&a\in[0,1],
\end{array}\right.\quad
Q(2|a,2)=\left\{
\begin{array}{ll}
1-a^2,&a\in[-1,0),\\
1-|a|,&a\in[0,1];
\end{array}\right.
\]
and  $z=(z(1),z(2))=\left(\frac12,\frac12\right)$ be the
probability
measure on $\X=\{1,2\}$. 

Formula (\ref{3.3}) with $B=\{1\}$ and $C=\{1\}$ implies
\begin{equation}\label{eq:e1}
R((1,1)|z,a)=\frac12Q(1|a,1)=\left\{
\begin{array}{ll}
\frac{|a|}{2},&a\in[-1,0),\\
\frac{a^2}{2},&a\in[0,1].
\end{array}\right.
\end{equation}
Setting $C=\{1\}$ in (\ref{3.5}), we obtain
\begin{equation}\label{eq:e2}
R'(1|z,a)=\frac12 Q(1|a,1)+\frac12
Q(1|a,2)=\frac{|a|+a^2}{2}, \quad a\in[-1,1].
\end{equation}
Formulas (\ref{eq:e1}) and (\ref{eq:e2}) imply that, if $H$ satisfies (\ref{3.4}), then
\[
H(1|z,a,1)=\frac{R((1,1)|z,a)}{R'(1|z,a)}=\left\{
\begin{array}{ll}
\frac{|a|}{|a|+a^2},&a\in[-1,0),\\
\frac{a^2}{|a|+a^2},&a\in(0,1].
\end{array}\right.
\]
Therefore,
\[
\lim\limits_{a\uparrow
0}H(1|z,a,1)=1\quad\mbox{and}\quad\lim\limits_{a\downarrow
0}H(1|z,a,1)=0.
\]
Thus, the stochastic kernel $H$ on $\X$ given $\P(\X)\times \A\times \Y$ is not weakly continuous in $a$, that is, $H:\P(\X)\times \A\times \Y\to \P(\X)$
is not a continuous mapping. In view of Corollary~\ref{cor:inf-HL123},  Assumption {\rm\bf{\bf(H)}} holds.\EndPf}
\end{example}

\begin{example}\label{exa3} Stochastic kernels $P$ on $\X$ given
$\X\times\A$ and $Q$ on $\Y$ given $\A\times \X$ are weakly
continuous, the stochastic kernel $R'$ on $\Y$ given $\P(\X)\times
\A$, defined by formula (\ref{3.5}), is weakly continuous, but it
is not setwise continuous.  Though Assumption {\rm\bf{\bf(H)}}
holds, the stochastic kernel $q$ on $\P(\X)$ given
$\P(\X)\times\A$, defined by formula (\ref{3.7}), is not weakly
continuous. 

{\rm  Let $\X=\{1,2\}$,
$\Y=\A=\{1,\frac{1}{2},\frac{1}{3},\dots\}\cup \{0\}$ with the
metric $\rho(a,b)=|a-b|$, $a,b\in \Y$, and $P(x|x,a)=1$,
$x\in\X,a\in\A$. Let also $Q(0|0,x)=1$,
 $Q(0|\frac{1}{m},x)=Q(\frac{1}{n}|0,x)=0$,  $x\in\X,$ and $Q(\frac1n|\frac1m,1)=a_{m,n}\sin^2(\frac{\pi
n}{2m})$, $Q(\frac1n|\frac1m,2)=a_{m,n}\cos^2(\frac{\pi n}{2m})$,
$m,n=1,2,\ldots$\,, where $a_{m,2mk+\ell}=\frac{1}{2^{k+1}m}$ for
$k=0,1,\ldots$\,, $\ell=1,2,\ldots,\, 2m$. Since
$\sum\limits_{\ell=1}^{2m}\sin^2(\frac{\pi
\ell}{2m})=\sum\limits_{\ell=1}^{2m}\cos^2(\frac{\pi
\ell}{2m})=\sum\limits_{\ell=1}^{m}(\sin^2(\frac{\pi
\ell}{2m})+\cos^2(\frac{\pi \ell}{2m}))=m$, then
$\sum\limits_{n=1}^{\infty}Q(\frac1n|\frac1m,x)=\sum\limits_{k=0}^{\infty}\sum\limits_{\ell=1}^{2m}Q(\frac{1}{2mk+\ell}|\frac1m,x)
=\sum\limits_{k=0}^{\infty}\frac{1}{2^{k+1}}=1$, $x\in\X$, and $Q$
is a stochastic kernel on $\Y$ given $\A\times \X$.  The
stochastic kernels $P$ on $\X$ given $\X\times \A$ and $Q$ on $\Y$
given $\A\times \X$ are weakly continuous. The former is true
because of the same reasons as in Example~\ref{exa1}. The latter
is true because $\limsup_{m\to\infty}Q(C|a_m,x)\le Q(C|0,x)$ for
any closed set $C$ in $\Y.$  Indeed, a set $C$ is closed in $\Y$
if and only if either (i) $0\in C$ or (ii) $0\notin C$ and $C$ is
finite. Let $C\subseteq\Y$ be closed. In case (i),
$\limsup_{m\to\infty}Q(C|a_m,x)\le 1=Q(C|0,x)$ as $a_m\to 0$,
$x\in \X$. In case (ii), $\lim_{m\to\infty}Q(C|a_m,x)=0=Q(C|0,x)$
as $a_m\to 0$, since
$\lim_{m\to\infty}Q(\frac1n|a_m,x)=0=Q(\frac1n|0,x)$ for
$n=1,2,\ldots$ and for $x\in\X.$

Formula (\ref{3.3}) implies that
$R(1,\frac1n|z,\frac1m)=z(1)a_{m,n}\sin^2(\frac{\pi n}{2m})$,
$R(2,\frac1n|z,\frac1m)=z(2)a_{m,n}\cos^2(\frac{\pi
n}{2m})$,\newline $R(1,0|z,\frac1m)=0$, $R(2,0|z,\frac1m)=0$, and
$R(1,\frac1n|z,0)=0$, $R(2,\frac1n|z,0)=0$, $R(1,0|z,0)=z(1)$,
$R(2,0|z,0)=z(2)$ for $m,n=1,2,\ldots$, $z=(z(1),z(2))\in\P(\X)$.
Formula (\ref{3.5}) yields  $R'(0|z,\frac1m)=0$,
$R'(\frac1n|z,\frac1m)=z(1)a_{m,n}\sin^2(\frac{\pi n}{2m})+
z(2)a_{m,n}\cos^2(\frac{\pi n}{2m})$,  and $R'(0|z,0)=1$,
$R'(\frac1n|z,0)=0$ for $m,n=1,2,\ldots$\,,
$z=(z(1),z(2))\in\P(\X)$. Therefore, $R'(0|z,\frac1m)\not\to
R'(0|z,0)$ as $m\to\infty$. Thus the stochastic kernel $R'$ on
$\Y$ given $\P(\X)\times \A$ \textit{is not setwise continuous}.
However, stochastic kernel $R'$ on $\Y$ given
$\P(\X)\times \A$ is weakly continuous.

 Observe that $\P(\X)=\{(z(1),z(2)):
z(1),z(2)\ge 0,\ z(1)+z(2)=1\}\subset\mathbb{R}^2$. Let
$z=(z(1),z(2))\in\P(\X)$. If $R'(y|z,a)>0$, in view of
(\ref{3.4}), $H(x'|z,a,y)=R((x',y)|z,a)/R'(y|z,a)$ for all
$x'\in\X,$ $a\in\A$, and $y\in\Y.$    Thus, if $R'(y|z,a)>0$ then
\[
H(z,a,y)=\begin{cases} \left(\frac{z(1)\sin^2(\frac{\pi
n}{2m})}{z(1)\sin^2(\frac{\pi n}{2m})+ z(2)\cos^2(\frac{\pi
n}{2m})}, \frac{z(2)\cos^2(\frac{\pi n}{2m})}{z(1)\sin^2(\frac{\pi
n}{2m})+ z(2)\cos^2(\frac{\pi n}{2m})}\right), &{\rm if\ }
a=\frac1m,\,  y=\frac1n,\, m,n=1,2,\ldots,\\
(z(1),z(2)), &{\rm if\ } a=y=0.
 \end{cases}\]
If $R'(y|z,a)=0$, we set $H(z,a,y)= z=(z(1),z(2)).$  In particular, $H(z,\frac1m,0)= z$ for all $m=1,2,\ldots.$

Observe that \textit{Assumption {\rm\bf{\bf(H)}} holds} because, if $R'(y|z,a)>0$ and   if sequences $\{z^{(N)}\}_{N=1,2,\ldots}\subseteq\P(\X)$ and
$\{a^{(N)}\}_{N=1,2,\ldots}\subseteq\A$ converge to $z\in\P(\X)$ and $a\in\A$ respectively as $N\to\infty$, then  $H(z^{(N)},a^{(N)},y)\to H(z,a,y)$ as
$N\to\infty.$ Indeed, it is sufficient to verify this property only for the following two cases: (i) $y=\frac1n$, $a=\frac1m$, and
$R'(\frac1n|z,\frac1m)>0$, where $m,n=1,2,\ldots$, and (ii) $y=a=0.$ In case (i),   $a^{(N)}=\frac1m$, when $N$ is large enough, and
$H(z^{(N)},\frac1m,\frac1n)\to H(z,\frac1m,\frac1n)$ as $N\to\infty$ because the function $H(z,\frac1m,\frac1n)$ is continuous in $z$, when
$R'(\frac1n|z,\frac1m)>0$. For case (ii), $H(z^{(N)},a^{(N)},0)=z^{(N)}\to z$ as $N\to\infty.$

Fix $z=(\frac12,\frac12)$. According to the above formulae,
$H(z,\frac1m,\frac1n)=(\sin^2(\frac{\pi n}{2m}),\cos^2(\frac{\pi
n}{2m}))$ and \linebreak
$R'(\frac1n|z,\frac1m)=\frac{a_{m,n}}{2}$. Consider a closed
subset $D=\{(z'(1),z'(2))\in \P(\X)\,: z'(1)\ge \frac34\}$ in
$\P(\X)$. Then $q(D|z,\frac1m)=\sum\limits_{n=1,2,\ldots}
\h\{\sin^2(\frac{\pi n}{2m})\ge
\frac34\}\frac{a_{m,n}}{2}=\sum\limits_{k=0}^{\infty}\sum\limits_{\ell=1}^{2m}\h\{\sin(\frac{\pi
\ell}{2m})\ge \frac{\sqrt
3}{2}\}\frac{a_{m,2mk}+\ell}{2}=\sum\limits_{\ell=1}^{2m}\h\{\sin(\frac{\pi
\ell}{2m})\ge \frac{\sqrt 3}{2}\}\frac{1}{2m}\sum\limits_{k=
0}^{\infty}\frac{1}{2^{k+1}}\to \frac13>0$ as $m\to\infty$, where
the limit takes place because
$|[\frac{2m}{3}]-\sum\limits_{\ell=1}^{2m}\h\{\sin(\frac{\pi
\ell}{2m})\ge \frac{\sqrt 3}{2}\}|\le 1$, where $[\cdot]$ is an
integer part of a number, and $ \sum\limits_{k=
0}^\infty\frac{1}{2^{k+1}}=1$. In addition, $q(D|z,0)=0$ since
$z\notin D$ and $q(z|z,0)={\bf I}\{H(z,0,0)=z\}R'(0|z,0)=1$. Thus,
$\lim\limits_{m\to\infty} q(D|z,\frac1m)=\frac13 >0= q(D|z,0)$ for
a closed set $D$ in $\P(\X).$ This implies that the stochastic
kernel $q$ on $\P(\X)$ given $\P(\X)\times\A$ \emph{is not weakly
continuous}. \EndPf
 }\end{example}

\section{Continuity of Transition Kernels for Posterior
Probabilities}\label{S5}

This section contains the proofs of Theorems~\ref{th:contqqq2} and
\ref{t:totalvar}. The following two versions of Fatou's lemma for a
sequence of measures $\{\mu^{(n)}\}_{n=1,2,\ldots}$ are used in the
proofs provided
below. 

\begin{lemma}\label{lem:F} {\rm (Generalized Fatou's Lemma).}
Let $\S$ be an arbitrary metric space, $\{\mu^{(n)}\}_{n=1,2,\ldots} \subset \mathbb{P}(\S)$, and $\{f^{(n)}\}_{n=1,2,\ldots}$ be a sequence of measurable
nonnegative $\overline{\mathbb{R}}$-valued functions on $\S$. Then:
\begin{itemize}
\item[(i)] {\rm(Royden \cite[p. 231]{Ro})} if $\{\mu^{(n)}\}_{n=1,2,\ldots}
\subset \mathbb{P}(\S)$ converges  setwise to $\mu\in
\mathbb{P}(\S)$, then
\begin{equation}\label{eq3aaa}\int_\S \liminf_{n\to\infty}f^{(n)}(s)\mu(ds)\le \liminf_{n\to\infty}\int_\S
f^{(n)}(s)\mu^{(n)}(ds);
\end{equation}
\item[(ii)] {\rm(Sch\"al~ \cite[Lemma 2.3(ii)]{Schal}, Jaskiewicz
and Nowak~\cite[Lemma~3.2]{in}, Feinberg et al.~\cite[Lemma 4]{FKZ}, \cite[Theorem~1.1]{TVP})} if $\{\mu^{(n)}\}_{n=1,2,\ldots} \subset \mathbb{P}(\S)$
converges weakly to $\mu\in \mathbb{P}(\S)$, then
\begin{equation}\label{eq3.1aaa}
\int_\S \ilim\limits_{n\to\infty,\, s'\to s}f^{(n)}(s')\mu(ds)\le \ilim\limits_{n\to \infty}\int_\S f^{(n)}(s)\mu^{(n)}(ds).
\end{equation}
\end{itemize}
\end{lemma}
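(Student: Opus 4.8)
The plan is to prove the two parts separately, after one common reduction. Denote by $d$ the metric of $\S$. In each part, replace $f^{(n)}$ by the truncation $f^{(n)}\wedge M$ for a fixed $M<\infty$: since $t\mapsto t\wedge M$ is continuous and nondecreasing it commutes with the pointwise $\ilim_{n}$ of part (i) and with the double lower limit $\ilim_{n\to\infty,\,s'\to s}$ of part (ii), while truncation only decreases $\int_\S f^{(n)}\,d\mu^{(n)}$; so it suffices to prove each inequality for the bounded functions $f^{(n)}\wedge M$ and then let $M\uparrow\infty$ by monotone convergence on the left-hand integral. Thus I may assume $0\le f^{(n)}\le M$. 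For part (i), which is the classical setwise statement, I would set $g_N:=\inf_{n\ge N}f^{(n)}$, so that $g_N\uparrow\ilim_{n}f^{(n)}$ and $g_N\le f^{(n)}$ for $n\ge N$, whence $\ilim_{n}\int_\S g_N\,d\mu^{(n)}\le\ilim_{n}\int_\S f^{(n)}\,d\mu^{(n)}$. For fixed $N$ each $g_N\wedge M$ is bounded measurable, so setwise convergence yields $\int_\S(g_N\wedge M)\,d\mu^{(n)}\to\int_\S(g_N\wedge M)\,d\mu$; combined with $\int_\S(g_N\wedge M)\,d\mu^{(n)}\le\int_\S g_N\,d\mu^{(n)}$ and a passage $M\uparrow\infty$ this gives $\int_\S g_N\,d\mu\le\ilim_{n}\int_\S g_N\,d\mu^{(n)}$, and a final passage $N\uparrow\infty$ closes part (i).

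For part (ii), write $\underline f(s):=\ilim_{n\to\infty,\,s'\to s}f^{(n)}(s')$. The first step is to check that $\underline f$ is lower semicontinuous. Writing $\phi_k(s):=\inf\{f^{(n)}(s'):n\ge k,\ d(s,s')<1/k\}$, so that $\underline f=\sup_k\phi_k$, a nested-ball estimate shows that whenever $s_m\to s$ and $m$ is large one has $\phi_{2k}(s_m)\ge\phi_k(s)$ (because $d(s_m,s')<1/(2k)$ forces $d(s,s')<1/k$); hence $\ilim_{m}\underline f(s_m)\ge\phi_k(s)$, and taking the supremum over $k$ gives lower semicontinuity. Since $\underline f$ is lsc, nonnegative and bounded by $M$, its Moreau--Yosida approximations $v_\ell(s):=\inf_{s'}\{\underline f(s')+\ell\,d(s,s')\}$ are bounded $\ell$-Lipschitz functions with $0\le v_\ell\uparrow\underline f$, so by monotone convergence $\int_\S\underline f\,d\mu=\sup_\ell\int_\S v_\ell\,d\mu$. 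It therefore suffices to prove, for every bounded Lipschitz $g$ with $0\le g\le\underline f$, that $\int_\S g\,d\mu\le\ilim_{n}\int_\S f^{(n)}\,d\mu^{(n)}$.

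Fix such a $g$ and $\varepsilon>0$. For each $s\in\S$, the inequality $g(s)\le\underline f(s)$ and the definition of the double lower limit yield $N(s)\in\mathbb N$ and a radius $\delta(s)\in(0,1]$ such that $f^{(n)}(s')>g(s')-2\varepsilon$ for all $n\ge N(s)$ and all $s'$ in the open ball $B(s,\delta(s))$, where $\delta(s)$ has been shrunk so that $g$ oscillates by less than $\varepsilon$ on the ball. The balls $\{B(s,\delta(s))\}_{s\in\S}$ cover $\S$; using separability of $\S$ (the Lindel\"of property, which holds in all applications of this paper) I extract a countable subcover $\{B_i\}$ with associated indices $N_i$, set $F_K:=\bigcup_{i\le K}B_i$, and choose $K$ with $\mu(\S\setminus F_K)<\varepsilon$. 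Then $F_K$ is open, $\S\setminus F_K$ is closed, and $f^{(n)}>g-2\varepsilon$ on $F_K$ once $n\ge\bar N:=\max_{i\le K}N_i$. Using $f^{(n)}\ge0$ and $0\le g\le M$,
\[
\int_\S f^{(n)}\,d\mu^{(n)}\ \ge\ \int_{F_K}(g-2\varepsilon)\,d\mu^{(n)}\ \ge\ \int_\S g\,d\mu^{(n)}-M\,\mu^{(n)}(\S\setminus F_K)-2\varepsilon,\qquad n\ge\bar N.
\]
Taking $\ilim_{n}$ and using weak convergence ($\int_\S g\,d\mu^{(n)}\to\int_\S g\,d\mu$ for the bounded continuous $g$) together with the portmanteau bound $\slim_{n}\mu^{(n)}(\S\setminus F_K)\le\mu(\S\setminus F_K)<\varepsilon$ for the closed set $\S\setminus F_K$, I obtain $\ilim_{n}\int_\S f^{(n)}\,d\mu^{(n)}\ge\int_\S g\,d\mu-(M+2)\varepsilon$; letting $\varepsilon\downarrow0$ and then taking the supremum over $g$ completes part (ii).

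The main obstacle is the coupling, inside $\underline f$, of the limit in $n$ with the spatial limit $s'\to s$: because $\underline f$ need not lie below any single $f^{(n)}$ pointwise, one cannot compare $\int_\S g\,d\mu^{(n)}$ with $\int_\S f^{(n)}\,d\mu^{(n)}$ through one fixed continuous minorant, and ordinary weak convergence does not apply directly. The local ball construction—which only uses the eventual domination $f^{(n)}>g-2\varepsilon$ valid on each small ball for all large $n$—combined with the portmanteau inequality on the \emph{closed} remainder $\S\setminus F_K$ and the reduction to bounded Lipschitz minorants, is exactly what resolves this difficulty. (For a genuinely nonseparable $\S$ the countable-subcover step would have to be carried out on the topological support of $\mu$, but this subtlety does not occur in the present setting, where $\S$ is separable.)
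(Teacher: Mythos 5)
Your proposal is correct, but it is worth noting at the outset that the paper does not prove this lemma at all: part (i) is quoted from Royden and part (ii) from Sch\"al, Jaskiewicz--Nowak, and Feinberg et al., so your blind proof is being compared against citations rather than an in-text argument. Measured against those sources, your proof is a sound, self-contained alternative. Part (i) is the standard monotone argument (truncate, set $g_N:=\inf_{n\ge N}f^{(n)}$, use setwise convergence of the integrals of the bounded measurable $g_N$, then let $N\uparrow\infty$ and $M\uparrow\infty$), and it is fine. For part (ii), each step checks out: the truncation $f^{(n)}\wedge M$ commutes with the double lower limit because $t\mapsto t\wedge M$ is continuous and nondecreasing; your nested-ball estimate $\phi_{2k}(s_m)\ge\phi_k(s)$ correctly yields lower semicontinuity (and hence Borel measurability) of $\underline f$; the Moreau--Yosida reduction to bounded Lipschitz minorants $g\le\underline f$ is valid on any metric space; and the covering argument (eventual domination $f^{(n)}>g-2\varepsilon$ on each ball, a countable subcover, the estimate $\int_\S f^{(n)}d\mu^{(n)}\ge\int_\S g\,d\mu^{(n)}-M\mu^{(n)}(\S\setminus F_K)-2\varepsilon$, weak convergence for the bounded Lipschitz $g$, and the portmanteau inequality on the closed remainder) assembles correctly. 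What the paper's citation route buys is brevity and stated generality; what your route buys is transparency --- the only nonelementary inputs are the portmanteau theorem and monotone convergence.

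The one point to tighten is the separability caveat you yourself flag: the lemma is stated for an \emph{arbitrary} metric space, your Lindel\"of extraction needs separability, and the fallback of passing to the topological support of $\mu$ is not available in ZFC for general nonseparable metric spaces (a Borel probability measure need not be concentrated on a separable subset without extra hypotheses). This is harmless for the paper, where every application has $\S$ separable. But your own construction removes the restriction with a small change, because your minorant $g=v_\ell$ is $\ell$-Lipschitz and therefore has oscillation at most $2\ell/N$ on \emph{every} ball of radius $1/N$, uniformly in the center. Define
\begin{equation*}
G_N:=\bigl\{s\in\S\,:\,f^{(n)}(s')>g(s')-2\varepsilon\ \text{for all }n\ge N\text{ and all }s'\text{ with }d(s,s')<1/N\bigr\}.
\end{equation*}
Then $\S\setminus G_N=\bigcup_{n\ge N}\{s:\,d(s,A_n)<1/N\}$ with $A_n:=\{f^{(n)}\le g-2\varepsilon\}$ is open, so $G_N$ is closed; the sets $G_N$ increase, and the uniform oscillation bound shows $\bigcup_N G_N=\S$, whence $\mu(\S\setminus G_N)\to 0$ by continuity from below --- no subcover needed. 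Replacing your $F_K$ by the open set $W_N:=\bigcup_{s\in G_N}B(s,1/(2N))\supseteq G_N$, on which the same domination holds for $n\ge N$, the rest of your estimate goes through verbatim with the portmanteau bound applied to the closed set $\S\setminus W_N$. With this substitution your proof covers the lemma exactly as stated.
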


\begin{proof}[Proof of Theorem~\ref{th:contqqq2}]
According to Parthasarathy \cite[Theorem~6.1, p.~40]{Part}, Shiryaev \cite[p.~311]{Sh}, Billingsley \cite[Theorem 2.1]{Bil}, the stochastic kernel
$q(dz'|z,a)$ on $\P(\X)$ given $\P(\X)\times \A$ is weakly continuous if and only if $q(D|z,a)$ is lower semi-continuous in $(z,a)\in (\P(\X)\times\X)$ for
every open set
$D$ in $\P(\X),$ that is, 
\begin{equation}\label{eq:q1222111}
\ilim_{n\to\infty}q(D|z^{(n)},a^{(n)})\ge q(D|z,a),
\end{equation}
for all $z,z^{(n)}\in\P(\X)$, and $a,a^{(n)}\in\A$, $n=1,2,\dots$, such that $z^{(n)}\to z$   weakly and
  $a^{(n)}\to a.$ 

  To prove (\ref{eq:q1222111}), suppose that
\[
\ilim_{n\to\infty}q(D|z^{(n)},a^{(n)})< q(D|z,a).
\]
Then there exists $\varepsilon^*>0$ and a subsequence $\{ z^{(n,1)},a^{(n,1)}\}_{n=1,2,\ldots}\subseteq\{z^{(n)},a^{(n)}\}_{n=1,2,\ldots}$  such that
\begin{equation}\label{eq:q2111}
q(D|z^{(n,1)},a^{(n,1)})\le q(D|z,a)-\varepsilon^*,\qquad\qquad  n=1,2,\ldots\ .
\end{equation}

If condition (ii) of Theorem~\ref{main1} holds, 
then formula (\ref{3.7}), the weak continuity of the stochastic kernel $R'$ on $\Y$ given $\P(\X)\times\A$ (this weak continuity is
proved in Hern\'{a}ndez-Lerma 
\cite[p.~92]{HLerma}), and Lemma~\ref{lem:F}(ii) 
contradict (\ref{eq:q2111}). If  condition (i) of
Theorem~\ref{main1} holds, 
then  there exists a subsequence
$\{z^{(n,2)},a^{(n,2)}\}_{n=1,2,\ldots}$ $\subseteq
\{z^{(n,1)},a^{(n,1)} \}_{n=1,2,\ldots}$ such that $
H(z^{(n,2)},a^{(n,2)},y)\to H(z,a,y)$ weakly as $n\to \infty,$
$R'(\,\cdot\,|z,a)$-almost surely in $y\in\Y.$
Therefore, since $D$ is an open set in $\P(\X)$,
\begin{equation}\label{eq:q0000}
\ilim_{n\to\infty}\h\{H(z^{(n,2)},a^{(n,2)},y)\in D\}\ge
\h\{H(z,a,y)\in D\},\quad R'(\,\cdot\,|z,a)\mbox{-almost surely in
}y\in\Y.
\end{equation}
Formulas (\ref{3.7}), (\ref{eq:q0000}), the setwise continuity of
the stochastic kernel $R'$ on $\Y$ given $\P(\X)\times\A$, and
Lemma~\ref{lem:F}(i) imply $
\ilim_{n\to\infty}q(D|z^{(n,2)},a^{(n,2)})\ge q(D|z,a), $ which
contradicts (\ref{eq:q2111}). Thus (\ref{eq:q1222111})
holds. 
\end{proof}

In order to prove Theorem~\ref{t:totalvar}, we need to formulate
and prove several auxiliary facts. 
Let $\S$ be a metric space, $\F(\S)$ and $\C(\S)$ be respectively
the spaces of all real-valued functions and all bounded continuous
functions defined on $\S$. A subset $\mathcal{A}_0\subseteq \F(\S)$
is said to be \textit{equicontinuous at a point $s\in\S$}, if $
\sup\limits_{f\in\mathcal{A}_0}|f(s')-f(s)|\to 0$ as $s'\to s. $ A
subset $\mathcal{A}_0\subseteq \F(\S)$ is said to be
\textit{uniformly bounded}, if there exists a constant constant
$M<+\infty $ such that $ |f(s)|\le M,$ for all $s\in\S$ and  for all
$f\in\mathcal{A}_0. $  Obviously, if a subset
$\mathcal{A}_0\subseteq \F(\S)$ is equicontinuous at all the points
$s\in\S$ and uniformly bounded, then $\mathcal{A}_0\subseteq
\C(\S).$

%
%

\begin{theorem}\label{kern}
Let $\S_1$, $\S_2$, and $\S_3$ be arbitrary metric spaces,
$\Psi(ds_2|s_1)$ be a weakly continuous stochastic kernel on $\S_2$
given $\S_1$, and a subset $\mathcal{A}_0\subseteq
\C(\S_2\times\S_3)$ be equicontinuous at all the points
$(s_2,s_3)\in\S_2\times\S_3$ and uniformly bounded. If $\S_2$ is
separable, then for every open set $\oo$ in $\S_2$ the family of
functions defined on $\S_1\times\S_3$,
\[
\mathcal{A}_\oo=\left\{(s_1,s_3)\to\int_{\oo}f(s_2,s_3)\Psi(ds_2|s_1)\,:\, f\in\mathcal{A}_0\right\},
\]
is equicontinuous at all the points $(s_1,s_3)\in\S_1\times\S_3$ and uniformly bounded.
\end{theorem}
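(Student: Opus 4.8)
The plan is to control the two sources of variation separately: the change in the integrand produced by moving $s_3$, and the change in the integrating measure produced by moving $s_1$. Uniform boundedness is immediate: if $|f|\le M$ on $\S_2\times\S_3$ for all $f\in\mathcal{A}_0$ and some $M<+\infty$, then $|\int_\oo f(s_2,s_3)\Psi(ds_2|s_1)|\le M\Psi(\oo|s_1)\le M$ for every $f$ and every $(s_1,s_3)$. For equicontinuity, fix a point $(s_1,s_3)$ and an arbitrary sequence $(s_1^{(n)},s_3^{(n)})\to(s_1,s_3)$; note $\Psi(\,\cdot\,|s_1^{(n)})$ converges weakly to $\Psi(\,\cdot\,|s_1)$. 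For $f\in\mathcal{A}_0$ split the increment as $I_n(f)+J_n(f)$, where
\[
I_n(f):=\int_\oo\bigl[f(s_2,s_3^{(n)})-f(s_2,s_3)\bigr]\Psi(ds_2|s_1^{(n)}),
\]
\[
J_n(f):=\int_\oo f(s_2,s_3)\Psi(ds_2|s_1^{(n)})-\int_\oo f(s_2,s_3)\Psi(ds_2|s_1).
\]
It then suffices to prove $\sup_{f\in\mathcal{A}_0}|I_n(f)|\to0$ and $\sup_{f\in\mathcal{A}_0}|J_n(f)|\to0$.

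The term $I_n$, carrying the $s_3$‑variation, I would dispose of by a single uniform majorant. Put $h_n(s_2):=\sup_{f\in\mathcal{A}_0}|f(s_2,s_3^{(n)})-f(s_2,s_3)|$; as a pointwise supremum of continuous functions $h_n$ is lower semi-continuous, hence Borel measurable, with $0\le h_n\le 2M$ and $|I_n(f)|\le\int_{\S_2}h_n\,d\Psi(\,\cdot\,|s_1^{(n)})$ for \emph{every} $f$. Equicontinuity of $\mathcal{A}_0$ at each $(s_2,s_3)$, via $|f(s_2',s_3^{(n)})-f(s_2',s_3)|\le|f(s_2',s_3^{(n)})-f(s_2,s_3)|+|f(s_2,s_3)-f(s_2',s_3)|$, yields $\slim_{n\to\infty,\,s_2'\to s_2}h_n(s_2')=0$ for every $s_2$. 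Applying the generalized Fatou lemma, Lemma~\ref{lem:F}(ii), to the nonnegative functions $2M-h_n$ and the weakly convergent measures $\Psi(\,\cdot\,|s_1^{(n)})$ gives $\slim_{n\to\infty}\int_{\S_2}h_n\,d\Psi(\,\cdot\,|s_1^{(n)})\le\int_{\S_2}\slim_{n,\,s_2'\to s_2}h_n(s_2')\,\Psi(ds_2|s_1)=0$, so $\sup_{f}|I_n(f)|\to0$.

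It remains to control $J_n$, where $s_3$ is frozen and only the measure moves; this is the substance of the statement. For the fixed value $s_3$ the family $\{f(\,\cdot\,,s_3):f\in\mathcal{A}_0\}\subseteq\C(\S_2)$ is equicontinuous and uniformly bounded. I would approximate the open set from below by the bounded Lipschitz functions $\phi_k(s_2):=\min\{1,\,k\,\mathrm{dist}(s_2,\S_2\setminus\oo)\}$, which increase pointwise to $\h\{s_2\in\oo\}$. For each fixed $k$ the family $\{\phi_k(\,\cdot\,)f(\,\cdot\,,s_3):f\in\mathcal{A}_0\}\subseteq\C(\S_2)$ is again equicontinuous and uniformly bounded, and for such families weak convergence of measures yields \emph{uniform} convergence of integrals: a subsequence argument combined with Arzel\`a--Ascoli (separability of $\S_2$ enters here) and tightness of the convergent sequence $\Psi(\,\cdot\,|s_1^{(n)})$ shows $\sup_{f}\bigl|\int_{\S_2}\phi_k f(\,\cdot\,,s_3)\,d\Psi(\,\cdot\,|s_1^{(n)})-\int_{\S_2}\phi_k f(\,\cdot\,,s_3)\,d\Psi(\,\cdot\,|s_1)\bigr|\to0$ as $n\to\infty$ for each fixed $k$. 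The error incurred by the truncation is the boundary-layer tail $\int_{\S_2}(\h\{\cdot\in\oo\}-\phi_k)\,|f(\,\cdot\,,s_3)|\,d\Psi\le M\int_{\S_2}(\h\{\cdot\in\oo\}-\phi_k)\,d\Psi$, concentrated on the shrinking set $\{s_2\in\oo:\mathrm{dist}(s_2,\S_2\setminus\oo)<1/k\}$.

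The main obstacle is precisely the uniform control, over $n$ and over $f$, of this boundary-layer tail as $k\to\infty$, i.e. showing $\sup_n\int_{\S_2}(\h\{\cdot\in\oo\}-\phi_k)\,d\Psi(\,\cdot\,|s_1^{(n)})\to0$. The nonnegative functions $\psi_k(s_1'):=\int_{\S_2}(\h\{\cdot\in\oo\}-\phi_k)\,d\Psi(\,\cdot\,|s_1')$ decrease pointwise to $0$, and the set $\{s_1^{(n)}\}\cup\{s_1\}$ is compact, so the target is a Dini-type uniform smallness of $\psi_k$ along the convergent sequence; since $\psi_k$ is only lower semi-continuous (as an integral over an open set), plain Dini does not apply, and this step must draw on the continuity properties of $\Psi$ to bound the mass that $\Psi(\,\cdot\,|s_1^{(n)})$ places in the boundary layer of $\oo$ uniformly in $n$. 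Once this uniform tail estimate is combined with the convergence of the $\phi_k$-truncations from the previous paragraph, one obtains $\sup_f|J_n(f)|\to0$, and together with $\sup_f|I_n(f)|\to0$ this establishes equicontinuity of $\mathcal{A}_\oo$ at $(s_1,s_3)$, completing the argument.
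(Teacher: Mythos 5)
Your uniform boundedness argument and your treatment of $I_n$ are correct and parallel the paper's handling of the $s_3$-variation: the paper likewise isolates the shift in $s_3$ and controls it with the generalized Fatou lemma, Lemma~\ref{lem:F}(ii), applied to the supremum over $f\in\mathcal{A}_0$ (its formula \eqref{eq:kern123***}). For $J_n$, the intermediate fact you want for fixed $k$ --- that weak convergence of measures yields convergence of integrals \emph{uniformly} over an equicontinuous, uniformly bounded family --- is true, and it is exactly what the paper invokes (Parthasarathy, Theorem~6.8, p.~51); note, however, that your proposed derivation of it via Arzel\`a--Ascoli plus tightness is itself suspect, since on a merely separable (not complete) metric space a weakly convergent sequence of probability measures need not be tight, so this fact should be cited rather than re-derived that way.

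The genuine gap is the step you flag and then assume: the uniform boundary-layer estimate $\sup_n\int_{\S_2}(\h\{s_2\in\oo\}-\phi_k(s_2))\,\Psi(ds_2|s_1^{(n)})\to0$ as $k\to\infty$. This is not a loose end that ``must draw on the continuity properties of $\Psi$''; as stated it is false. Take $\S_1=\{0\}\cup\{1/n: n=1,2,\ldots\}$, $\S_2=\mathbb{R}$, $\oo=(0,1)$, and the weakly continuous kernel $\Psi(\,\cdot\,|s_1)=\delta_{s_1}$ (Dirac measure at $s_1$), with $s_1^{(n)}=1/n\to s_1=0$: then $\int_{\S_2}(\h\{s_2\in\oo\}-\phi_k(s_2))\,\delta_{1/n}(ds_2)=1-\min\{1,k/n\}\to 1$ as $n\to\infty$ for every fixed $k$, so no truncation level works uniformly in $n$. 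Weak convergence gives upper control of mass only on closed sets, and mass sitting inside $\oo$ but ever closer to $\partial\oo$ escapes every fixed $\phi_k$; the configuration above is precisely the danger case in which the limit measure charges $\partial\oo$, and it shows that a single global truncation of the open set cannot close the argument. The paper goes around this obstruction differently: using separability of $\S_2$ and Lindel\"of's lemma it writes $\oo$ as a countable disjoint union of sets $A^{(j)}$ that are continuity sets of the \emph{limit} measure $\Psi(\,\cdot\,|s_1)$ (built from balls with $\Psi(\,\cdot\,|s_1)$-null spheres, Parthasarathy, p.~50); on each piece it gets $\Psi(A^{(j)}|s_1')\to\Psi(A^{(j)}|s_1)$ by the portmanteau theorem, normalizes $\Psi$ to a probability measure on $A^{(j)}$, and applies Parthasarathy's Theorem~6.8 together with the Fatou step to obtain the per-piece uniform convergence \eqref{eq:kern2}; it then sums over $j$, controlling the tail of the series through the limit measure via the domination of the $j$-th term by $2\mathcal{M}\Psi(A^{(j)}|s_1)$. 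Your decomposition into $I_n+J_n$ is fine, but closing $J_n$ requires replacing the Lipschitz truncations $\phi_k$ by this continuity-set partition of $\oo$; as written, your proof is incomplete at its central step.
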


\begin{proof}  The family $\mathcal{A}_\emptyset$ consists of a
single function, which is identically equal to 0.  Thus, the
statement of the theorem holds when $\oo=\emptyset.$  Let $\oo\ne
\emptyset.$ Since $\mathcal{A}_0\subseteq \C(\S_2\times\S_3)$ is
uniformly bounded, then
\begin{equation}\label{eq:kern123}
\mathcal{M}=\sup\limits_{f\in\mathcal{A}_0 }\sup\limits_{s_2\in\S_2}\sup\limits_{s_3\in\S_3}|f(s_2,s_3)|<\infty,
\end{equation}
and, since $\Psi(ds_2|s_1)$ is a stochastic kernel, the family of
functions $\mathcal{A}_\oo$ is uniformly bounded by $\mathcal{M}$.  

Let us fix an arbitrary nonempty open set $\oo\in\S_2$ and an
arbitrary point $(s_1,s_3)\in\S_1\times\S_3$. We shall prove that
$\mathcal{A}_\oo\subset \F(\S_1\times\S_3)$ is equicontinuous at the
point $(s_1,s_3)$. For any $s\in \S_2$ and $\delta>0$ denote by
$B_\delta(s)$ and $\bar{B}_\delta(s)$ respectively the open and
closed balls in the
metric space $\S_2$ of the radius $\delta$ with the center $s$ 
and by $S_\delta(s)$ the sphere in 
$\S_2$ of the radius $\delta$ with the center $s$. Note that
$S_\delta(s)=\bar{B}_\delta(s)\setminus B_\delta(s)$ is the
boundary of $B_\delta(s)$. Every ball $B_\delta(s)$ contains a
ball $B_{\delta'}(s)$, $0<\delta'\le\delta$, such that
\[
\Psi(\bar{B}_{\delta'}(s)\setminus B_{\delta'}(s)|s_1)=\Psi(S_{\delta'}(s)|s_1)=0,
\]
that is, $B_{\delta'}(s)$ is a continuity set for the probability 
measure $\Psi(\,\cdot\,|s_1)$; Parthasarathy \cite[p.~50]{Part}. Since $\oo$ is an open set in $\S_2$, for any $s\in \oo$ there exists $\delta_s>0$ such
that $B_{\delta_s}(s)$ is a continuity set for a probability measure $\Psi(\,\cdot\,|s_1)$ and $B_{\delta_s}(s)\subseteq\oo$. The family
$\{B_{\delta_s}(s)\,:\,
s\in \oo\}$ is a cover of $\oo$. Since $\S_2$ is a separable metric 
space, by Lindel\"of's lemma, there exists a sequence $\{s^{(j)}\}_{j=1,2,\ldots}\subset \oo$ such
that $\{B_{\delta_{s^{(j)}}}(s^{(j)})\,:\, j=1,2,\dots \}$ is a cover of 
$\oo$. The sets
\[
A^{(1)}:=B_{\delta_{s^{(1)}}}(s^{(1)}),\ A^{(2)}:=B_{\delta_{s^{(2)}}}(s^{(2)})\setminus B_{\delta_{s^{(1)}}}(s^{(1)}),\ \dots,\
A^{(j)}:=B_{\delta_{s^{(j)}}}(s^{(j)})\setminus\left(\cup_{i=1}^{j-1} B_{\delta_{s^{(i)}}}(s^{(i)})\right),\dots
\]
are continuity sets for the probability measure
$\Psi(\,\cdot\,|s_1)$. In view of Parthasarathy
\cite[Theorem~6.1(e), p.~40]{Part}, 
\begin{equation}\label{eq:kern1b}
\Psi(A^{(j)}|s_1')\to \Psi(A^{(j)}|s_1) \quad{\rm as}\  s_1'\to s_1,\quad j=1,2,\dots\ .
\end{equation}
Moreover,
\begin{equation}\label{eq:kern1a}
\cup_{j=1,2,\ldots}A^{(j)}=\oo\quad\mbox{and}\quad A^{(i)}\cap A^{(j)}=\emptyset \quad{\rm for\  all}\quad  i\ne j.
\end{equation}

The next step of the proof is to show that for each $j=1,2,\dots$
\begin{equation}\label{eq:kern2}
\sup\limits_{f\in\mathcal{A}_0 }\left|\int_{A^{(j)}} f(s_2,s_3')\Psi(ds_2|s_1')-\int_{A^{(j)}} f(s_2,s_3)\Psi(ds_2|s_1)\right|\to 0 \qquad {\rm as}\
(s_1',s_3')\to (s_1,s_3).
\end{equation}

Fix an arbitrary $j=1,2,\dots$\ . If $\Psi(A^{(j)}|s_1)=0$, then
formula (\ref{eq:kern2}) directly follows from (\ref{eq:kern1b})
and (\ref{eq:kern123}). Now let $\Psi(A^{(j)}|s_1)>0$. Formula
(\ref{eq:kern1b}) implies the existence of such $\delta>0$ that
$\Psi(A^{(j)}|s_1')>0$ for all $s_1'\in B_\delta(s_1)$. We endow
$A^{(j)}$ with the induced topology from $\S_2$ and set
\[
\Psi_j(C|s_1'):=\frac{\Psi(C|s_1')}{\Psi(A^{(j)}|s_1')},\quad s_1'\in B_\delta(s_1), \ C\in \B(A^{(j)}).
\]
Formula (\ref{eq:kern1b}) yields
\begin{equation}\label{eq:kern3a}
\Psi_j(ds_2|s_1')\mbox{ converges weakly to } \Psi_j(ds_2|s_1)\mbox{ in } \P(A^{(j)})\  {\rm as}\  s_1'\to s_1.
\end{equation}
%
%
According to Parthasarathy \cite[Theorem~6.8, p.~51]{Part},
\begin{equation}\label{eq:kern123*}
\sup\limits_{f\in\mathcal{A}_0 }\left|\int_{A^{(j)}} f(s_2,s_3)\Psi(ds_2|s_1')-\int_{A^{(j)}} f(s_2,s_3)\Psi(ds_2|s_1)\right|\to 0 \qquad {\rm as}\ s_1'\to
s_1.
\end{equation}
Equicontinuity of $\mathcal{A}_0\subseteq \C(\S_2\times\S_3)$ at
all the points $(s_2,s_3)\in\S_2\times\S_3$ and the inequality $
|f(s_2',s_3')-f(s_2',s_3)|\le
|f(s_2',s_3')-f(s_2,s_3)|+|f(s'_2,s_3)-f(s_2,s_3)|$ imply
\begin{equation}\label{eq:kern123**}
\slim\limits_{(s_2',s_3')\to(s_2,s_3)}\sup\limits_{f\in\mathcal{A}_0
}|f(s_2',s_3')-f(s_2',s_3)|=0\quad{\rm  for\ all\quad }
s_2\in\S_2.
\end{equation}
Thus, formulas (\ref{eq:kern123**}), (\ref{eq:kern3a}) and
Lemma~\ref{lem:F}(ii) imply 
\begin{equation}\label{eq:kern123***}\begin{aligned}
&\slim\limits_{(s_1',s_3')\to(s_1,s_3)} \sup\limits_{f\in\mathcal{A}_0 }\left|\int_{A^{(j)}} \left(f(s_2,s_3')-f(s_2,s_3)\right)\Psi(ds_2|s_1')\right|\\
%
%
&\le \int_{A^{(j)}}\slim\limits_{(s_2',s_3')\to(s_2,s_3)}\sup\limits_{f\in\mathcal{A}_0 }\left|f(s_2',s_3')-f(s_2',s_3)\right|\Psi(ds_2|s_1)=0.
\end{aligned}\end{equation} Formula (\ref{eq:kern2}) follows from
(\ref{eq:kern123*}) and (\ref{eq:kern123***}).

Since, for all $j=1,2,\dots$ and for all $(s_1',s_3')\in
\S_1\times\S_3$,
\[
\sup\limits_{f\in\mathcal{A}_0 }\left|\int_{A^{(j)}} f(s_2,s_3')\Psi(ds_2|s_1')-\int_{A^{(j)}} f(s_2,s_3)\Psi(ds_2|s_1)\right|\le
2\mathcal{M}\Psi(A^{(j)}|s_1),
\]
and $\sum\limits_{j=1}^{\infty} \Psi(A^{(j)}|s_1)= \Psi(\oo|s_1)\le1$, then equicontinuity of $\mathcal{A}_\oo$ at the point $(s_1,s_3)$ follows from
(\ref{eq:kern1a}) and (\ref{eq:kern2}). Indeed,
\[
\sup\limits_{f\in\mathcal{A}_0 }\left|\int_{\oo} f(s_2,s_3')\Psi(ds_2|s_1')-\int_{\oo} f(s_2,s_3)\Psi(ds_2|s_1)\right|
\]
\[
\le \sum\limits_{j=1,2,\dots } \sup\limits_{f\in\mathcal{A}_0 }\left|\int_{A^{(j)}} f(s_2,s_3')\Psi(ds_2|s_1')-\int_{A^{(j)}}
f(s_2,s_3)\Psi(ds_2|s_1)\right|\to0\quad {\rm as}\ (s_1',s_3')\to (s_1,s_3).
\]

As $(s_1,s_3)\in\S_1\times\S_3$ is arbitrary,
the above inequality implies that $\mathcal{A}_\oo$ is
equicontinuous at all the points $(s_1,s_3)\in \S_1\times\S_3$. In
particular, $\mathcal{A}_\oo\subseteq \C(\S_1\times\S_3)$.
\end{proof}

For a set $B\in\B(\X)$, let $\mathcal{R}_B$ be the following
family of functions defined on $\P(\X)\times \A$: 
\begin{equation}
\label{R-def} \mathcal{R}_B=\left\{(z,a)\to R( B\times C|z,a)\,:\, C\in\B(\Y)\right\}.
\end{equation}

\begin{lemma}\label{corRsetmin}
Let the stochastic kernel $P(dx'|x,a)$ on $\X$ given $\X\times\A$ be
weakly continuous and the stochastic kernel $Q(dy|a,x)$ on $\Y$
given $\A\times\X$ be continuous in the
 total  variation.  Consider the stochastic kernel $R(\,\cdot\,|z,a)$ on $\X\times\Y$ given $\P(\X)\times\A$
 defined in formula (\ref{3.3}). Then, for every pair of 
 open sets $\oo_1$ and $\oo_2$ in $\X$, the family of functions $\mathcal{R}_{\oo_1 \setminus \oo_2}$
defined on $\P(\X)\times \A$ is uniformly bounded and is equicontinuous at all the points $(z,a)\in \P(\X)\times\A$, that is, for all $z, z^{(n)} \in
\P(\X)$, $a, a^{(n)} \in \A$, $n = 1,2,\ldots$, such that $z^{(n)} \to z$ weakly and $a^{(n)} \to a$,
\begin{equation}
\label{eq:EC1} \sup\limits_{C\in\B(\Y)} |R((\oo_1\setminus\oo_2) \times C|z^{(n)}, a^{(n)}) - R((\oo_1\setminus\oo_2)\times C|z,a)| \to 0.
\end{equation}
\end{lemma}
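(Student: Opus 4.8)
The uniform boundedness of $\mathcal{R}_{\oo_1\setminus\oo_2}$ is immediate, since $R(B\times C|z,a)\in[0,1]$ for all measurable $B$ and $C$. So the whole content is the equicontinuity assertion (\ref{eq:EC1}), and my plan is to obtain it by invoking Theorem~\ref{kern} twice: once for the inner integration against $P$ and once for the outer integration against $z$, after reducing the difference $\oo_1\setminus\oo_2$ to genuinely open sets by linearity. The observation that makes the machinery applicable is that continuity of $Q$ in the total variation is \emph{exactly} the statement that the family
\[
\mathcal{Q}:=\left\{(x',a)\mapsto Q(C|a,x')\,:\,C\in\B(\Y)\right\}
\]
is equicontinuous at every point of $\X\times\A$; it is also uniformly bounded by $1$, so $\mathcal{Q}\subseteq\C(\X\times\A)$. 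This is precisely the type of family that Theorem~\ref{kern} consumes.

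For the first application I would take $\S_1:=\X\times\A$, $\S_2:=\X$, $\S_3:=\A$, use the weakly continuous kernel $\Psi(dx'|(x,\hat a)):=P(dx'|x,\hat a)$ on $\S_2$ given $\S_1$, and set $\mathcal{A}_0:=\mathcal{Q}$. Since $\X$ is separable, Theorem~\ref{kern} yields, for every open $\oo\subseteq\X$, that
\[
\left\{((x,\hat a),a)\mapsto\int_{\oo}Q(C|a,x')P(dx'|x,\hat a)\,:\,C\in\B(\Y)\right\}
\]
is equicontinuous on $(\X\times\A)\times\A$ and uniformly bounded. Composing with the continuous diagonal embedding $(x,a)\mapsto((x,a),a)$ collapses the two copies of the action variable and shows that, for every open $\oo\subseteq\X$, the family $\{(x,a)\mapsto\phi_{\oo}(x,a;C):C\in\B(\Y)\}$ with $\phi_{\oo}(x,a;C):=\int_{\oo}Q(C|a,x')P(dx'|x,a)$ is equicontinuous on $\X\times\A$ and uniformly bounded, hence contained in $\C(\X\times\A)$.

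Next I would handle the difference together with the outer integral. Writing $\oo_1=(\oo_1\setminus\oo_2)\sqcup(\oo_1\cap\oo_2)$ gives $\int_{\oo_1\setminus\oo_2}=\int_{\oo_1}-\int_{\oo_1\cap\oo_2}$, and since $\oo_1\cap\oo_2$ is open, the family $\{(x,a)\mapsto\psi(x,a;C):C\in\B(\Y)\}$ with $\psi(x,a;C):=\phi_{\oo_1}(x,a;C)-\phi_{\oo_1\cap\oo_2}(x,a;C)=\int_{\oo_1\setminus\oo_2}Q(C|a,x')P(dx'|x,a)$ is a difference of two equicontinuous uniformly bounded families and is therefore itself equicontinuous on $\X\times\A$ and uniformly bounded. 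I would then apply Theorem~\ref{kern} a second time, now with $\S_1:=\P(\X)$, $\S_2:=\X$, $\S_3:=\A$, the kernel $\Psi(dx|z):=z(dx)$ on $\X$ given $\P(\X)$ (weakly continuous because the identity map $z\mapsto z$ is continuous for the topology of weak convergence), and $\mathcal{A}_0:=\{\psi(\cdot,\cdot;C):C\in\B(\Y)\}$. Taking $\oo=\X$, which is open in $\X$, the theorem gives that
\[
\mathcal{R}_{\oo_1\setminus\oo_2}=\left\{(z,a)\mapsto\int_{\X}\psi(x,a;C)\,z(dx)\,:\,C\in\B(\Y)\right\}
\]
is equicontinuous at all points of $\P(\X)\times\A$ and uniformly bounded. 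Recalling that $\P(\X)\times\A$ is a metric space on which $(z^{(n)},a^{(n)})\to(z,a)$ means exactly $z^{(n)}\to z$ weakly and $a^{(n)}\to a$, this is precisely (\ref{eq:EC1}).

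The routine parts are the uniform bounds and the bookkeeping for $\oo_1\setminus\oo_2$. The genuinely delicate points I would state carefully are: (i) recognizing total-variation continuity of $Q$ as the equicontinuity of $\mathcal{Q}$, which is what makes Theorem~\ref{kern} applicable at all; (ii) the fact that in the inner integral the action $a$ appears simultaneously in the integrand $Q(C|a,x')$ and in the kernel $P(dx'|x,a)$ --- this is exactly what forces the use of the auxiliary variable $\S_3=\A$ in Theorem~\ref{kern} and the subsequent diagonal restriction, and it is the only place where the product structure of that theorem is essential; and (iii) checking that the evaluation kernel $z\mapsto z$ is a weakly continuous stochastic kernel on $\X$ given $\P(\X)$, which is what lets the outer averaging against the \emph{varying} measure $z$ be treated by the same theorem instead of by a separate argument.
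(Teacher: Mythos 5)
Your proposal is correct and follows essentially the same route as the paper: both observe that total-variation continuity of $Q$ means the family $\left\{(a,x)\mapsto Q(C|a,x):C\in\B(\Y)\right\}$ is uniformly bounded and equicontinuous, apply Theorem~\ref{kern} twice with exactly the same choices ($\S_1=\X\times\A$, $\S_2=\X$, $\S_3=\A$, $\Psi=P$, then $\S_1=\P(\X)$, $\S_2=\X$, $\S_3=\A$, $\Psi(B|z)=z(B)$, $\oo=\X$), and dispose of the set difference by writing it via genuinely open sets. The only (cosmetic) deviation is that you resolve the difference before the outer integration using $\oo_1$ and $\oo_1\cap\oo_2$, whereas the paper resolves it after, via the triangle inequality applied to $\mathcal{R}_{\oo_1\cup\oo_2}$ and $\mathcal{R}_{\oo_2}$; you also make explicit the diagonal restriction $(x,a)\mapsto((x,a),a)$ that the paper performs implicitly.
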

\begin{proof}  Since $R$ is a
stochastic kernel, all the functions in the family $\mathcal{R}_{\oo_1 \setminus \oo_2}$ are nonnegative and bounded above by 1.
Thus, this family is
uniformly bounded. The remaining proof establishes the equicontinuity of $\mathcal{R}_{\oo_1 \setminus \oo_2}$ at all
the points $(z,a)\in \P(\X)\times\A$.
First we show that 
$\mathcal{R}_{\oo}$ is equicontinuous at all the points $(z,a)$ when
$\oo$ is an open set in $\X$. 
Theorem~\ref{kern}, with $\S_1=\X\times\A$, $\S_2=\X$, $\S_3=\A$, $\oo=\oo$,  $\Psi=P,$ and 
$\mathcal{A}_0=\left\{(a,x)\to Q(C|a,x)\,:\,
C\in\B(\Y)\right\}\subseteq \C(\A\times\X)$, implies that the
 the family of functions
$\mathcal{A}_{\oo}^1=\left\{(x,a)\to\int_{\oo}Q(C|a,x')P(dx'|x,a)\,:\,C\in\B(\Y)\right\}$
is equicontinuous at all the points 
$(x,a)\in \X\times\A$. In particular, $\mathcal{A}_{\oo}^1\subseteq
 \C(\A\times\X)$. Thus, Theorem~\ref{kern}, with $\S_1=\P(\X)$, $\S_2=\X$, $\S_3=\A$, $\oo=\X$, $\Psi(B|z)=z(B)$, $B\in\B(\X)$, $z\in\P(\X)$, and
$\mathcal{A}_0=\mathcal{A}_{\oo}^1$, implies that the family
$\mathcal{R}_{\oo}$ is equicontinuous at all the points
$(z,a) \in\P(\X)\times\A$.  
Second, let $\oo_1$ and $\oo_2$ be arbitrary open sets in $\X$. Then the families of functions $\mathcal{R}_{\oo_1}$, $\mathcal{R}_{\oo_2}$, and
$\mathcal{R}_{\oo_1 \cup \oo_2}$ are equicontinuous at all the points $(z, a)\in \P(\X)\times\A$. Thus, for all $z, z^{(n)} \in \P(\X)$, $a, a^{(n)} \in
\A$, $n = 1,2,\ldots$, such that $z^{(n)} \to z$ weakly and $a^{(n)} \to a$,
\begin{equation}
\begin{aligned}
\label{eq:EC}
&\sup\limits_{C\in\B(\Y)} |R((\oo_1\setminus\oo_2)  \times C|z^{(n)}, a^{(n)}) - R((\oo_1\setminus\oo_2)\times C|z,a)| \\
&\le \sup\limits_{C\in\B(\Y)}|R((\oo_1\cup\oo_2)\times C|z^{(n)},a^{(n)})-R((\oo_1\cup\oo_2)\times C|z,a)|\\
&+\sup\limits_{C\in\B(\Y)}|R(\oo_2\times C|z^{(n)},a^{(n)})-R(\oo_2\times C|z,a)|\to 0,
\end{aligned}
\end{equation}
that is, the family of functions
$\mathcal{R}_{\oo_1\setminus\oo_2}$ 
is equicontinuous at all the points $(z,a)\in \P(\X)\times\A$.
\end{proof}

\begin{corollary}\label{corR'}
Let assumptions of Lemma~\ref{corRsetmin} hold. Then the stochastic
kernel $R'(dy|z,a)$ on $\Y$ given $\P(\X)\times\A$, defined in
formula (\ref{3.5}), is continuous in  the total  variation.
\end{corollary}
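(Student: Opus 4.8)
The plan is to deduce this corollary as an immediate specialization of Lemma~\ref{corRsetmin}, without any new estimate. The starting observation is that, comparing \eqref{3.3} with \eqref{3.5}, for every $C\in\B(\Y)$, $z\in\P(\X)$, and $a\in\A$ we have
\[
R'(C|z,a)=\int_{\X}\int_{\X}Q(C|a,x')P(dx'|x,a)z(dx)=R(\X\times C|z,a);
\]
that is, $R'(\,\cdot\,|z,a)$ is the $\Y$-marginal of $R(\,\cdot\,|z,a)$. Consequently the family $\mathcal{R}_{\X}=\{(z,a)\to R(\X\times C|z,a):C\in\B(\Y)\}$ introduced in \eqref{R-def} coincides with the family $\{(z,a)\to R'(C|z,a):C\in\B(\Y)\}$.

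First I would observe that the whole space $\X$ is open in $\X$ and that $\emptyset$ is open, so $\X=\X\setminus\emptyset$ is of the form $\oo_1\setminus\oo_2$ with $\oo_1=\X$ and $\oo_2=\emptyset$ both open in $\X$. Applying Lemma~\ref{corRsetmin} with this choice gives the equicontinuity \eqref{eq:EC1} of $\mathcal{R}_{\X}$ at every point: whenever $z^{(n)}\to z$ weakly and $a^{(n)}\to a$,
\[
\sup_{C\in\B(\Y)}\left|R(\X\times C|z^{(n)},a^{(n)})-R(\X\times C|z,a)\right|\to 0,
\]
which, by the marginal identity above, is exactly
\[
\sup_{C\in\B(\Y)}\left|R'(C|z^{(n)},a^{(n)})-R'(C|z,a)\right|\to 0.
\]

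It then remains only to match this with the definition of convergence in the total variation. For two probability measures $\mu,\nu$ on $\Y$, the supremum $\sup_{C\in\B(\Y)}|\mu(C)-\nu(C)|$ equals one half of $\sup\{\int_{\Y}f\,d\mu-\int_{\Y}f\,d\nu:f:\Y\to[-1,1]\text{ Borel}\}$, by considering the Hahn decomposition of the signed measure $\mu-\nu$, which has total mass zero. Hence the displayed convergence is precisely convergence of $R'(\,\cdot\,|z^{(n)},a^{(n)})$ to $R'(\,\cdot\,|z,a)$ in the total variation; since the topology on $\P(\X)\times\A$ is that of weak convergence on $\P(\X)$ together with the given metric on $\A$, this says exactly that $R'$ is continuous in the total variation. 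There is essentially no obstacle here: the statement is a direct corollary, and the only points deserving a word are the identification of $\X$ as a difference of two open sets, so that Lemma~\ref{corRsetmin} applies, and the standard identity (up to the harmless factor of two) relating the supremum over Borel sets to the total-variation distance.
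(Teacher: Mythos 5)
Your proposal is correct and follows exactly the paper's own one-line proof, which applies Lemma~\ref{corRsetmin} with $\oo_1=\X$ and $\oo_2=\emptyset$ and reads off the total-variation convergence of $R'(\,\cdot\,|z^{(n)},a^{(n)})$ from the uniform bound \eqref{eq:EC1}. The extra details you supply (the marginal identity $R'(C|z,a)=R(\X\times C|z,a)$ and the factor-of-two equivalence between $\sup_{C\in\B(\Y)}|\mu(C)-\nu(C)|$ and the paper's definition of total-variation convergence) are accurate and merely make explicit what the paper leaves implicit.
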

\begin{proof}
This corollary follows from  Lemma~\ref{corRsetmin} applied to
$\oo_1=\X$ and $\oo_2 = \emptyset$.
\end{proof}

\begin{theorem}\label{setwise}
Let $\S$ be an arbitrary metric space, $\{h,\, h^{(n)}\}_{n=1,2,\ldots}$ be 
Borel-measurable uniformly bounded real-valued functions on $\S$, $\{\mu^{(n)}\}_{n=1,2,\ldots}\subset \P(\S)$ converges in  the total  variation to
$\mu\in\P(\S)$, and
\begin{equation}\label{sw1}
\sup\limits_{S\in\B(\S)}\left|\int_{S}h^{(n)}(s)\mu^{(n)}(ds)- \int_{S}h(s)\mu(ds)\right|\to 0 \quad {\rm as}\quad n\to\infty.
\end{equation}
Then $\{h^{(n)}\}_{n=1,2,\ldots}$ converges in probability $\mu $ to  $h$, and therefore there is a subsequence $\{n_k\}_{k=1,2,\ldots}$ such that
$\{h^{(n_k)}\}_{k=1,2,\ldots}$ converges $\mu$-almost surely to $h.$ 
\end{theorem}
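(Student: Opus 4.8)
The plan is to prove the two assertions in turn: first that $\{h^{(n)}\}_{n=1,2,\ldots}$ converges to $h$ in $\mu$-probability, and then to obtain the $\mu$-almost sure convergence of a subsequence as an immediate consequence, since convergence in probability always admits an almost surely convergent subsequence.

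To prove convergence in probability, I would first rewrite hypothesis (\ref{sw1}), which mixes the integrators $\mu^{(n)}$ and $\mu$, as a uniform statement involving $\mu$ alone. Let $M<+\infty$ be a common bound with $|h(s)|\le M$ and $|h^{(n)}(s)|\le M$ for all $s\in\S$ and all $n$. For every $S\in\B(\S)$ the Borel function $s\mapsto M^{-1}h^{(n)}(s)\mathbf{1}_S(s)$ takes values in $[-1,1]$, so the definition of convergence in the total variation (applied also with $f$ replaced by $-f$ to remove the sign) yields $\left|\int_S h^{(n)}\,d(\mu^{(n)}-\mu)\right|\le M\varepsilon_n$, where $\varepsilon_n\to 0$ is the supremum appearing in that definition and, crucially, this bound is uniform in $S$. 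Combining it with (\ref{sw1}) via the triangle inequality gives
\[
\gamma_n:=\sup_{S\in\B(\S)}\left|\int_S\left(h^{(n)}(s)-h(s)\right)\mu(ds)\right|\to 0\qquad\text{as }n\to\infty.
\]

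The heart of the argument is then to extract convergence in probability from this uniform control by testing against level sets. Fix $\varepsilon>0$ and set $S_n^+:=\{s\in\S:h^{(n)}(s)-h(s)\ge\varepsilon\}$ and $S_n^-:=\{s\in\S:h^{(n)}(s)-h(s)\le-\varepsilon\}$, both Borel. On $S_n^+$ the integrand is at least $\varepsilon$, so $\varepsilon\,\mu(S_n^+)\le\int_{S_n^+}(h^{(n)}-h)\,d\mu\le\gamma_n$, and symmetrically $\varepsilon\,\mu(S_n^-)\le\gamma_n$. Adding these bounds gives $\mu(\{s:|h^{(n)}(s)-h(s)|\ge\varepsilon\})\le 2\gamma_n/\varepsilon\to 0$, which is exactly convergence in $\mu$-probability.

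Finally, for the almost sure statement I would apply the classical extraction: using the convergence in probability just established, choose indices $n_1<n_2<\cdots$ with $\mu(\{s:|h^{(n_k)}(s)-h(s)|\ge 1/k\})\le 2^{-k}$; the Borel--Cantelli lemma then forces $h^{(n_k)}\to h$ $\mu$-almost surely. I do not anticipate a serious obstacle. The only genuinely delicate point is the reduction step, where the uniform boundedness of the $h^{(n)}$ is what allows the discrepancy between $\mu^{(n)}$ and $\mu$ to be absorbed into the total-variation distance uniformly over all test sets $S$; the level-set estimate and the Borel--Cantelli extraction are then routine.
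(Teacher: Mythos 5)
Your proof is correct and takes essentially the same route as the paper: both arguments test the hypothesis against the level sets $\{s\in\S:\pm(h^{(n)}(s)-h(s))\ge\varepsilon\}$, use the total-variation convergence of $\mu^{(n)}$ to $\mu$ (together with uniform boundedness) to absorb the discrepancy between the two integrators, and then extract the almost surely convergent subsequence in the standard way. The only difference is organizational and harmless: you swap $\mu^{(n)}$ for $\mu$ once at the outset, applying total-variation convergence to the uniformly bounded test functions $M^{-1}h^{(n)}\h\{\,\cdot\in S\}$ to get the single-measure quantity $\gamma_n\to 0$, whereas the paper applies it to the fixed function $h$ and to the sets $S^{(n,\pm)}$, first bounding $\mu^{(n)}(S^{(n,\pm)})$ and only at the end transferring to $\mu(S^{(n)})$.
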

\begin{proof}
Fix an arbitrary $\varepsilon
>0$ and set
\[
S^{(n,+)}:=\{s\in\S\,:\, h^{(n)}(s)-h(s)\ge \varepsilon\}, \ S^{(n,-)}:=\{s\in\S\,:\, h(s)-h^{(n)}(s)\ge \varepsilon\},
\]
\[
S^{(n)}:=\{s\in\S\,:\, |h^{(n)}(s)-h(s)|\ge \varepsilon\}=S^{(n,+)}\cup S^{(n,-)}, \quad n=1,2,\ldots\ .
\]
Note that for all $n=1,2,\ldots$
\begin{equation}\label{sw5}\begin{aligned}
\varepsilon \mu^{(n)}(S^{(n,+)})\le\int_{S^{(n,+)}}
h^{(n)}(s)\mu^{(n)}(ds) & -\int_{S^{(n,+)}} h(s)\mu^{(n)}(ds)\\
\le\left|\,\int_{S^{(n,+)}} h^{(n)}(s)\mu^{(n)}(ds)-\int_{S^{(n,+)}} h(s)\mu(ds)\right|&+ \left|\,\int_{S^{(n,+)}} h(s)\mu^{(n)}(ds) -\int_{S^{(n,+)}}
h(s)\mu(ds)\right|
 .\end{aligned}
\end{equation}
The convergence in the total variation of $\mu^{(n)}$ to $\mu\in \P(\S)$  implies that
\begin{equation}\label{sw5a}
\left|\,\int_{S^{(n,+)}} h(s)\mu^{(n)}(ds)-\int_{S^{(n,+)}} h(s)\mu(ds)\right|\to 0 {\rm\ and\ } |\mu^{(n)}(S^{(n,+)})-\mu(S^{(n,+)})|\to 0 {\rm\ as\ }
n\to\infty.
\end{equation}
Formulas (\ref{sw1})--(\ref{sw5a}) yield
$\left|\int_{S^{(n,+)}} h^{(n)}(s)\mu^{(n)}(ds)-\int_{S^{(n,+)}} h(s)\mu(ds)\right|\to 0$ as $n\to\infty$, and, in view of (\ref{sw5}),
\begin{equation}\label{sw6}
\mu^{(n)}(S^{(n,+)})\to 0\quad {\rm as} \quad n\to\infty.
\end{equation}
Being applied to the functions $\{-h,-h^{(n)}\}_{n=1,2,\ldots}$,
formula (\ref{sw6}) implies that $\mu^{(n)}(S^{(n,-)})\to 0$ as
$n\to\infty.$ This fact, (\ref{sw6}) and
%
%
%
the convergence in  the total  variation of $\mu^{(n)}$ to $\mu$
in $\P(\S)$ imply
\begin{equation*}\begin{aligned}
\mu(S^{(n)})=\mu(S^{(n,+)})&+\mu(S^{(n,-)})\\
\le |\mu(S^{(n,+)})-\mu^{(n)}(S^{(n,+)})|+ |\mu(S^{(n,-)})-\mu^{(n)}(S^{(n,-)})|&+\mu^{(n)}(S^{(n,+)})+\mu^{(n)}(S^{(n,-)}) \to0 {\rm\ as\ }  n\to\infty.
\end{aligned}\end{equation*}
Since $\varepsilon>0$ is arbitrary, $\{h^{(n)}\}_{n=1,2,\ldots}$
converges to $h$ in probability $\mu$ and, therefore,
$\{h^{(n)}\}_{n=1,2,\ldots}$ contains a subsequence
$\{h^{(n_k)}\}_{k=1,2,\ldots}$ that converges $\mu$-almost surely to
$h.$
\end{proof}

\begin{lemma}
\label{R-H} If the topology on $\X$ has a countable base $\tau_b=\{\oo^{(j)}\}_{j=1,2,\ldots}$ such that, for each finite intersection $\oo=\cap_{i=1}^ N
{\oo}^{(j_i)}$ of its elements $\oo^{(j_i)}\in\tau_b,$ $i=1,2,\ldots,N$,  the family of functions
$\mathcal{R}_{\oo}$ defined in \eqref{R-def} 
is equicontinuous at all the points $(z,a) \in \P(\X) \times \A$,
then for any sequence $\{(z^{(n)}, a^{(n)})\}_{n =1,2,\ldots},$
such that $\{z^{(n)}\}_{n =1,2,\ldots} \subseteq \P(\X)$ converges
weakly to $z \in \P(\X)$ and $\{a^{(n)}\}_{n =1,2,\ldots}
\subseteq \A $ converges to
$a$, there exists a subsequence $\{(z^{(n_k)}, a^{(n_k)})\}_{k =1,2,\ldots}$ and a set $C^* \in \B(\Y)$ such that  
\begin{equation}
\label{result} R'(C^* | z,a)  = 1 \mbox{ and } H(\, \cdot\, | z^{(n_k)}, a^{(n_k)},y ) \mbox{ converges weakly to }  H(\, \cdot\, | z,a,y) \mbox{ for all
} y \in C^*,
\end{equation}
and, therefore, Assumption {\rm\bf ({H})} holds.
\end{lemma}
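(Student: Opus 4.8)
The plan is to derive Assumption~\textbf{(H)} by feeding the equicontinuity hypothesis into Theorem~\ref{setwise}, via the disintegration \eqref{3.4}, and then to promote the resulting almost-sure convergence on a countable $\pi$-system of sets to full weak convergence of the posterior measures. First I would fix sequences $z^{(n)}\to z$ weakly and $a^{(n)}\to a$, and set $\mu^{(n)}:=R'(\,\cdot\,|z^{(n)},a^{(n)})$ and $\mu:=R'(\,\cdot\,|z,a)$. Applying the hypothesis with $\oo=\X$ (the empty intersection of base elements, which we may include among the finite intersections) shows that the family $\mathcal{R}_\X$ is equicontinuous at $(z,a)$; since $R'(C|z,a)=R(\X\times C|z,a)$, this is exactly the statement that $\mu^{(n)}\to\mu$ in the total variation. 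I would then let $\mathcal{U}=\{\oo_m\}_{m=1,2,\ldots}$ enumerate the countably many finite intersections $\cap_{i=1}^N\oo^{(j_i)}$ of base elements, and record that $\mathcal{U}$ is a countable base for the topology of $\X$ that is closed under finite intersections.

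Next, for each $m$ I would introduce the uniformly bounded (by $1$) Borel functions $h_m^{(n)}(y):=H(\oo_m|z^{(n)},a^{(n)},y)$ and $h_m(y):=H(\oo_m|z,a,y)$ on $\Y$. By the disintegration \eqref{3.4} with $B=\oo_m$, for every $C\in\B(\Y)$,
\[
\int_C h_m^{(n)}(y)\,\mu^{(n)}(dy)=R(\oo_m\times C\,|\,z^{(n)},a^{(n)}),\qquad \int_C h_m(y)\,\mu(dy)=R(\oo_m\times C\,|\,z,a),
\]
so the equicontinuity of $\mathcal{R}_{\oo_m}$ supplied by the hypothesis is precisely condition \eqref{sw1} of Theorem~\ref{setwise}. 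Since $\mu^{(n)}\to\mu$ in the total variation and the $h_m^{(n)},h_m$ are uniformly bounded, Theorem~\ref{setwise} yields, for each fixed $m$, that $h_m^{(n)}$ converges to $h_m$ in probability $\mu$.

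I would then produce a single subsequence by a diagonal argument. Because there are only countably many indices $m$ and each sequence $\{h_m^{(n)}\}_n$ converges in $\mu$-probability, I can extract nested subsequences (one for each $m$) and diagonalize to obtain one subsequence $\{n_k\}$ along which $h_m^{(n_k)}\to h_m$ holds $\mu$-almost surely for \emph{every} $m$ simultaneously. Intersecting the countably many corresponding full-measure sets gives a set $C^*\in\B(\Y)$ with $R'(C^*|z,a)=\mu(C^*)=1$ such that $H(\oo_m|z^{(n_k)},a^{(n_k)},y)\to H(\oo_m|z,a,y)$ for every $m$ and every $y\in C^*$.

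Finally, I would upgrade this to weak convergence of the posterior measures. Fixing $y\in C^*$, convergence holds on the countable base $\mathcal{U}$, which is closed under finite intersections; by inclusion–exclusion this forces convergence of $H(\,\cdot\,|z^{(n_k)},a^{(n_k)},y)$ to $H(\,\cdot\,|z,a,y)$ on every finite union of elements of $\mathcal{U}$, and since each open set of $\X$ is an increasing countable union of such finite unions, the portmanteau criterion $\ilim_{k\to\infty}H(\oo|z^{(n_k)},a^{(n_k)},y)\ge H(\oo|z,a,y)$ for open $\oo$ follows (Parthasarathy~\cite[Theorem~6.1]{Part}), which is equivalent to the weak convergence $H(\,\cdot\,|z^{(n_k)},a^{(n_k)},y)\to H(\,\cdot\,|z,a,y)$. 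This establishes \eqref{result} and hence Assumption~\textbf{(H)}. I expect this last step to be the main obstacle: the finite-intersection form of the hypothesis is exactly what makes the inclusion–exclusion and monotone-approximation argument work, and some care is needed because we only have convergence on base sets rather than on an arbitrary convergence-determining class, so the base must be closed under finite intersections for the portmanteau inequality to be recovered.
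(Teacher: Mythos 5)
Your proposal is correct, and it runs on the same engine as the paper's proof: via the disintegration \eqref{3.4}, equicontinuity of $\mathcal{R}_{\oo}$ at $(z,a)$ is exactly condition \eqref{sw1} of Theorem~\ref{setwise}, which gives convergence in $R'(\,\cdot\,|z,a)$-probability of the posterior masses, and a diagonal extraction over a countable class of sets produces one subsequence and one full-measure set $C^*$. Where you genuinely diverge is in the combinatorial route from the countable class to all open sets. The paper does not work with the $\pi$-system of finite intersections at the limit: it first upgrades the hypothesis, via inclusion--exclusion (inequality \eqref{e:17}) and the set-difference bound \eqref{eq:EC}, to equicontinuity of $\mathcal{R}_B$ for the countable class $\mathcal{A}_2$ of differences $\tilde{\oo}\setminus\oo'$ with $\tilde{\oo}\in\tau_b$ and $\oo'$ a finite union of base elements; it then applies Theorem~\ref{setwise} to these differences and, writing an arbitrary open $\oo$ as a countable \emph{disjoint} union of $\mathcal{A}_2$-sets, obtains the portmanteau liminf inequality from countable additivity together with Fatou's lemma for series. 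You instead keep the hypothesis as stated, obtain almost sure convergence on the $\pi$-system of finite intersections, and recover open sets at the level of the limit measures: inclusion--exclusion yields convergence on finite unions of base elements (legitimate because the $\pi$-system is closed under intersections), and continuity from below along an increasing exhaustion of $\oo$ by such finite unions gives $\ilim_{k\to\infty}H(\oo|z^{(n_k)},a^{(n_k)},y)\ge H(\oo|z,a,y)$. Both routes are valid; yours avoids verifying equicontinuity on set differences and avoids Fatou for series, at the price of pushing inclusion--exclusion to the limit measures --- which works precisely because you have two-sided limits on the $\pi$-system, whereas the paper's Fatou step needs only one-sided bounds on disjoint pieces.

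One point you should make explicit: Theorem~\ref{setwise} requires $R'(\,\cdot\,|z^{(n)},a^{(n)})\to R'(\,\cdot\,|z,a)$ in the total variation, and you obtain this by reading $\X$ as the empty intersection of base elements. As the lemma is stated, the intersections have $N\ge 1$, and $\X$ need not be a finite intersection of base elements (consider the base of open balls in $\mathbb{R}$). This is not a real gap --- the paper's own proof invokes Theorem~\ref{setwise} with these measures without comment, relying on the context of Theorem~\ref{t:totalvar} where Lemma~\ref{corRsetmin} gives equicontinuity of $\mathcal{R}_{\oo}$ for \emph{every} open $\oo$, including $\oo=\X$ --- and the convergence does follow from the stated hypothesis: take finite unions $U_m$ of base elements increasing to $\X$, note that $\mathcal{R}_{U_m}$ is equicontinuous at $(z,a)$ by inclusion--exclusion, and use the three-term bound $\sup_{C}|R(\X\times C|z^{(n)},a^{(n)})-R(\X\times C|z,a)|\le \sup_{C}|R(U_m\times C|z^{(n)},a^{(n)})-R(U_m\times C|z,a)|+R((\X\setminus U_m)\times\Y|z^{(n)},a^{(n)})+R((\X\setminus U_m)\times\Y|z,a)$, where the middle term is controlled since $R((\X\setminus U_m)\times\Y|z^{(n)},a^{(n)})=1-R(U_m\times\Y|z^{(n)},a^{(n)})\to R((\X\setminus U_m)\times\Y|z,a)$, which tends to $0$ as $m\to\infty$. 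Either state the $N=0$ convention or insert this short derivation; with that, your argument is complete.
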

As clear from the proof of Lemma~\ref{R-H}, 
the intersection assumption is equivalent to the similar
assumption for finite unions. However, in this paper we use the
intersection assumption.  
\begin{proof}
According to 
 Billingsley \cite[Theorem~2.1]{Bil} or Shiryaev \cite[p.~311]{Sh},
\eqref{result} holds if  there exists a subsequence
$\{(z^{(n_k)}, a^{(n_k)})\}_{k =1,2,\ldots}$ of the sequence $\{(z^{(n)}, a^{(n)})\}_{n =1,2,\ldots}$ and a set $C^* \in \B(\Y)$ such that 
\begin{equation}
\label{eq:q1EF}
R'(C^*|z,a)=1\quad{\rm and} \quad \ilim\limits_{k\to\infty}H({\mathcal O}|z^{(n_k)},a^{(n_k)},y)\ge H({\mathcal O}|z,a,y)\ {\rm for\ all}\ y\in C^*,
\end{equation}
for all open sets $\mathcal{O}$ in $\X$.  The rest of the proof establishes the existence of a subsequence $\{(z^{(n_k)}, a^{(n_k)})\}_{k =1,2,\ldots}$ of
the sequence $\{(z^{(n)}, a^{(n)})\}_{n =1,2,\ldots}$ and a set $C^* \in \B(\Y)$ such that \eqref{eq:q1EF} holds for all open sets $\mathcal{O}$ in $\X$.

Let $\mathcal{A}_1$ be a family of all the subsets of $\X$ that
are
 finite unions of sets from $\tau_b$, and let
$\mathcal{A}_2$ be a family of all subsets $B$ of $\X$ such that $B = \mathcal{\tilde{O}} \setminus \mathcal{O} '$ with $\mathcal{\tilde{O}}  \in \tau_b$
and $\mathcal{O} ' \in \mathcal{A}_1$. Observe that: (i) both $\mathcal{A}_1$ and
$\mathcal{A}_2$ are countable, 
(ii) any open set $\oo$ in $\X$ can be represented as
\begin{equation}
\label{eq:equi} \oo = \bigcup_{j =1,2,\ldots} \oo^{(j,1)} = \bigcup_{j =1,2,\ldots} B^{(j,1)},\qquad{\rm for\ some}\qquad \oo^{(j,1)}\in\tau_b,\
j=1,2,\ldots,
\end{equation}
where  $B^{(j,1)}=\oo^{(j,1)} \setminus \left (\bigcup_{i=1}^{j-1} \oo^{(i,1)} \right)$  are disjoint elements of $\mathcal{A}_2$ (it is allowed that
$\oo^{(j,1)}=\emptyset $ or $B^{(j,1)}=\emptyset$ for some
$j=1,2,\ldots$). 

To prove \eqref{eq:q1EF} for all open sets $\mathcal{O}$ in $\X$, we first show that there exists a subsequence $\{(z^{(n_k)}, a^{(n_k)})\}_{k
=1,2,\ldots}$ of the sequence $\{(z^{(n)}, a^{(n)})\}_{n =1,2,\ldots}$ and a set $C^* \in \B(\Y)$ such that \eqref{eq:q1EF} holds for all $\mathcal{O}\in
\mathcal{A}_2.$ 

Consider an arbitrary $\oo^*\in {\mathcal A}_1.$ Then $\mathcal{\oo}^*=\cup_{i=1}^n \mathcal{\oo}^{(j_i)}$ for some $n=1,2,\ldots,$ where $\oo^{(j_i)}\in
\tau_b,$ $i=1,\ldots,n.$  Let $\mathcal{A}^{(n)}=\left\{\cap_{m=1}^k \oo^{(i_m)} :\,\{i_1,i_2,\ldots,i_k\}\subseteq \{j_1,j_2,\ldots,j_n\} \right\}$ be the
finite set of possible intersections of $\oo^{(j_1)},\ldots,\oo^{(j_n)}.$ The principle of inclusion-exclusion implies that for
$\mathcal{\oo}^*=\cup_{i=1}^n \mathcal{\oo}^{(j_i)},$ $C\in \B(\Y)$, $z,z'\in\P(\X),$ and $a,a'\in \A$,
\begin{equation}\label{e:17}
|R(\oo^* \times C|z, a) - R(\oo^*\times C|z',a')|\le\sum_{B\in\mathcal{A}^{(n)}} |R(B \times C|z, a) - R(B\times C|z',a')|.\end{equation} In view of the
assumption of the lemma regarding finite intersections of the elements of the base $\tau_b$, for each $\oo^*\in {\mathcal A}_1$ the family
$\mathcal{R}_{\oo^*}$ is equicontinuous at all the points $(z,a)\in \P(\X)\times \A$. Inequality (\ref{eq:EC}) implies that for each $B\in \mathcal{A}_2$
the family $\mathcal{R}_{B}$ is equicontinuous at all the points $(z,a)\in \P(\X)\times \A$,
 that is, \eqref{eq:EC1} holds with arbitrary
$\oo_1\in\tau_b$ and $ \oo_2 \in\mathcal{A}_1$. This fact along with the definition of $H$ (see \eqref{3.4}) means that
\begin{equation}
\label{eq:ECBj} \lim_{n \to \infty}\sup_{C\in\B(\Y)}\left|\int_{C} H(B|z^{(n)}, a^{(n)}, y) R'(dy|z^{(n)}, a^{(n)})- \int_{C}H(B|z, a, y) R'(dy|z,a)\right|
= 0,
\end{equation}
for any $B\in\mathcal{A}_2$.


Since the set ${\mathcal A}_2$ is countable, let $\mathcal{A}_2 :=
\{B^{(j)}:\,j=1,2,\ldots\}.$  Denote 
$z^{(n,0)}=z^{(n)}$, $a^{(n,0)}=a^{(n)}$ for all $n=1,2,\ldots.$  For $j=1,2,\ldots$,   from
\eqref{eq:ECBj} and Theorem~\ref{setwise} with $\S=
\Y$, $s = y$, $h^{(n)}(s) = H(B^{(j)}|z^{(n,j-1)}, a^{(n,j-1)}, y)$, $\mu^{(n)}(\cdot) = R'(\,\cdot\,|z^{(n,j-1)},
a^{(n,j-1)})$, $h(s) = H(B^{(j)} |z, a, y)$, and $\mu(\,\cdot\,) 
= R'(\,\cdot\,| z,a)$, 
there exists a subsequence $\{(z^{(n,j)}, a^{(n,j)})\}_{n =1,2,\ldots}$ of the sequence $\{(z^{(n,j-1)}, a^{(n,j-1)})\}_{n =1,2,\ldots}$ and a set
$C^{(*,j)} \in \B(
\Y)$ such that
\begin{equation}
\label{B-j} R'(C^{(*,j)} \mid z,a)  = 1 \mbox{ and } \lim_{n \to \infty}H( B^{(j)} \mid z^{(n,j)}, a^{(n,j)}, y ) = H(B^{(j)} \mid z,a,y) \mbox{ for all }
y \in C^{(*,j)}.
\end{equation}

Let $C^* := \bigcap_{j = 1}^\infty C^{(*,j)}$.  Observe that $R'(C^*| z,a) = 1$. Let $z^{(n_k)}=z^{(k,k)}$ and $a^{(n_k)}=a^{(k,k)},$ $k=1,2,\ldots\ .$ As
follows from Cantor's diagonal argument, \eqref{eq:q1EF} holds with $\oo=B^{(j)}$ for
all $j =1,2,\ldots\ .$ 
In other words, \eqref{eq:q1EF} holds for all $\oo\in {\cal A}_2.$

Let $\oo$ be an arbitrary open set in $\X$ and
$B^{(1,1)},B^{(2,1)},\ldots$ be disjoint elements of ${\cal A}_2$
satisfying \eqref{eq:equi}. Countable additivity of probability
measures $H(\cdot|\cdot,\cdot)$ implies that for all $y \in C^*$ 
\begin{multline*}
\begin{aligned}
\ilim_{k\to \infty}H(\mathcal{O}|z^{(n_k)}, a^{(n_k)},y)
&=\ilim_{k\to \infty}\sum_{j=1}^\infty H(B^{(j,1)}|z^{(n_k)},
a^{(n_k)},y)
\\ \ge
\sum_{j=1}^\infty \ilim_{k\to\infty}H(B^{(j,1)}|z^{(n_k)},
a^{(n_k)},y)& =\sum_{j=1}^\infty
H(B^{(j,1)}|z,a,y)=H(\mathcal{O}|z,a,y),
\end{aligned}
\end{multline*}
where the inequality follows from Fatou's lemma. 
Since $R'(C^* | z,a) =
1$, \eqref{eq:q1EF} holds.
\end{proof}

\begin{proof}[Proof of Theorem~\ref{t:totalvar}]  The setwise continuity
of the stochastic kernel $R'$ follows from  Corollary~\ref{corR'} that states the continuity of $R'$ in the total variation.  The validity of Assumption
({\bf H}) follows from Lemma~\ref{corRsetmin} and Lemma~\ref{R-H}.  In particular, $\tau_b$ is any countable base of the state space $\X$, and, in view of
Lemma~\ref{corRsetmin}, the family ${\mathcal R}_\oo$ is equicontinuous at all the points $(z,a)\in\P(\X)\times\A$  for each open set $\oo$ in $\X,$ which
implies that the assumptions of Lemma~\ref{R-H} hold.
\end{proof}

\section{Preservation of Properties of One-Step Costs
and Proof of Theorem~\ref{th:wstar}}\label{S6}

As shown in this section, the reduction of a POMDP to the COMDP
preserves properties of one-step cost functions that are needed
for the existence of optimal policies.  These properties include
inf-compactness and $\K$-inf-compactness.   In particular, in this
section we prove Theorem~\ref{th:wstar} and thus complete the
proof of Theorem~\ref{main1}.


We recall that an $\overline{\mathbb{R}}$-valued function $f,$
defined on a nonempty subset $U$ of a metric space $\mathbb{U},$
is called \textit{inf-compact on $U$} if all the level sets
$\{y\in U \, : \, f(y)\le \lambda\}$, $\lambda\in\mathbb{R}$, are
compact. A function $f$ is called \emph{lower semi-continuous}, if
all the level sets are closed.

The notion of a $\K$-inf-compact function $c(x,a),$ defined in
Section~\ref{S2} for a function $c:\X\times\A\to
\overline{\mathbb{R}},$ is also applicable to a function
$f:\S_1\times \S_2\to \overline{\mathbb{R}}$, where $\S_1$ and
$\S_2$ are metric spaces, or certain more general toplogical
spaces; see Feinberg et al. \cite{JMAA, JMAA1} for details, where
the properties of $\K$-inf-compact functions are described.
%
%
%
%
In particular, according to Feinberg et al. \cite[Lemma 2.1]{JMAA},
if a function $f$ is inf-compact on $\S_1\times\S_2$ then it is
$\K$-inf-compact on $\S_1\times\S_2$. According to Feinberg et al.
\cite[Lemmas 2.2, 2.3]{JMAA}, a $\K$-inf-compact function $f$ on
$\S_1\times\S_2$ is lower semi-continuous on $\S_1\times\S_2$, and,
in addition, for each $s_1\in\S_1$, the function  $f(s_1,\cdot)$ is
inf-compcat on $\S_2$.

\begin{lemma}\label{lem:lsc}
If the function $c:\X\times\A\to \R$ is bounded below and lower
semi-continuous on $\X\times\A$, then the function
$\c:\P(\X)\times\A\to\R$ defined in (\ref{eq:c}) is bounded below
and lower semi-continuous  on $\P(\X)\times\A$.
\end{lemma}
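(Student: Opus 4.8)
The plan is to derive the lower semi-continuity of $\c$ from the generalized Fatou's lemma for weakly converging measures, Lemma~\ref{lem:F}(ii). Boundedness below is immediate: if $c(x,a)\ge -K$ for all $(x,a)\in\X\times\A$, then for every $(z,a)\in\P(\X)\times\A$ one has $\c(z,a)=\int_\X c(x,a)\,z(dx)\ge -K$, since $z$ is a probability measure; hence $\c$ is bounded below by the same constant $-K$. To treat lower semi-continuity I would first shift the cost to make it nonnegative: set $\hat c(x,a):=c(x,a)+K\ge 0$, which is again lower semi-continuous, and observe that $\int_\X \hat c(x,a)\,z(dx)=\c(z,a)+K$. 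Thus it suffices to prove that $(z,a)\mapsto\int_\X \hat c(x,a)\,z(dx)$ is lower semi-continuous.

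Since $\X$ is separable, $\P(\X)$ is metrizable in the weak topology, so $\P(\X)\times\A$ is a metric space and lower semi-continuity is equivalent to sequential lower semi-continuity. I would fix $(z,a)\in\P(\X)\times\A$ and an arbitrary sequence $(z^{(n)},a^{(n)})\to(z,a)$, that is, $z^{(n)}\to z$ weakly and $a^{(n)}\to a$, and apply Lemma~\ref{lem:F}(ii) with $\S=\X$, $\mu^{(n)}=z^{(n)}$, $\mu=z$, and the nonnegative measurable functions $f^{(n)}(x)=\hat c(x,a^{(n)})$. This gives
\[
\int_\X \ilim\limits_{n\to\infty,\,x'\to x}\hat c(x',a^{(n)})\,z(dx)\le \ilim\limits_{n\to\infty}\int_\X \hat c(x,a^{(n)})\,z^{(n)}(dx)=\ilim\limits_{n\to\infty}\bigl(\c(z^{(n)},a^{(n)})+K\bigr).
\]

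The main obstacle will be the pointwise bound on the integrand on the left-hand side, namely that $\ilim_{n\to\infty,\,x'\to x}\hat c(x',a^{(n)})\ge \hat c(x,a)$ for every $x\in\X$. This is exactly where the joint lower semi-continuity of $c$ on $\X\times\A$ is used. Unpacking the double liminf, one considers pairs $(x',a^{(n)})$ with $x'$ ranging over shrinking balls around $x$ and $n\to\infty$; because $a^{(n)}\to a$, every such pair converges to $(x,a)$ in $\X\times\A$, so the lower semi-continuity of $\hat c=c+K$ at $(x,a)$ yields the inequality, treating separately the cases $c(x,a)<+\infty$ and $c(x,a)=+\infty$ via the neighborhood formulation of lower semi-continuity. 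Substituting this bound into the Fatou inequality above and cancelling the constant $K$ gives $\c(z,a)\le\ilim_{n\to\infty}\c(z^{(n)},a^{(n)})$, which is precisely the sequential lower semi-continuity of $\c$ at $(z,a)$. As $(z,a)$ is arbitrary, this completes the proof.
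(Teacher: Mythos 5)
Your proposal is correct and follows essentially the same route as the paper, whose proof consists precisely of invoking the generalized Fatou Lemma~\ref{lem:F}(ii); you have simply made the application explicit (the shift by $K$ to obtain nonnegativity, the choice $f^{(n)}(x)=\hat c(x,a^{(n)})$, and the pointwise bound $\ilim_{n\to\infty,\,x'\to x}\hat c(x',a^{(n)})\ge\hat c(x,a)$ via joint lower semi-continuity and $a^{(n)}\to a$). All the details you supply, including the metrizability of $\P(\X)$ justifying the sequential formulation and the handling of the $+\infty$ case, are sound.
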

\begin{proof}
The statement of this lemma directly follows from generalized
Fatou's Lemma~\ref{lem:F}(ii). 
\end{proof}

The inf-compactness of $c$ on $\X\times\A$ implies the
inf-compactness of $\c$ on $\P(\X)\times \A$. We recall that an
inf-compact function on $\X\times\A$  with values in
$\R=\mathbb{R}\cup\{+\infty\}$ is bounded below on $\X\times\A$.

\begin{theorem}
If $c:\X\times\A\to \R$ is an inf-compact function on
$\X\times\A$, then the function $\c:\P(\X)\times\A\to\R$ defined
in (\ref{eq:c}) is inf-compact  on $\P(\X)\times\A$.
\end{theorem}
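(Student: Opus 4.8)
The plan is to prove that every level set $\{(z,a)\in\P(\X)\times\A:\ \c(z,a)\le\lambda\}$, $\lambda\in\mathbb{R}$, is compact, where $\c$ is defined in (\ref{eq:c}). Since $\P(\X)$ is a separable metric space and $\A$ is metric, $\P(\X)\times\A$ is metric, so it suffices to show each level set is sequentially compact. An inf-compact function with values in $\R$ is bounded below, say $c\ge -K$; replacing $c$ by $c+K$ replaces $\c$ by $\c+K$ (because $z$ is a probability measure) and merely translates all level sets of $c$ and of $\c$ without affecting inf-compactness. Hence I may assume $c\ge 0$. Moreover an inf-compact $c$ is lower semi-continuous (its level sets are compact, hence closed), so Lemma~\ref{lem:lsc} applies and $\c$ is lower semi-continuous; consequently the limit of any convergent sequence lying in a level set of $\c$ again lies in that level set, and it remains only to extract convergent subsequences.

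Fix $\lambda\ge 0$ and a sequence $\{(z^{(n)},a^{(n)})\}_{n=1,2,\ldots}$ with $\int_\X c(x,a^{(n)})z^{(n)}(dx)=\c(z^{(n)},a^{(n)})\le\lambda$. For $\mu>0$ put $G_\mu:=\{(x,a)\in\X\times\A:\ c(x,a)\le\mu\}$, which is compact by inf-compactness, and let $K_\mu$ and $L_\mu$ be the projections of $G_\mu$ onto $\X$ and $\A$; both are compact, being continuous images of a compact set. The crucial observation is that $\{x\in\X:\ c(x,a)\le\mu\}\subseteq K_\mu$ for \emph{every} $a\in\A$, so the ``small-cost'' slice sits inside a compact set that is independent of $a$, even though the slice itself moves with $a^{(n)}$. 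Together with Markov's inequality $z^{(n)}(\{x:\ c(x,a^{(n)})>\mu\})\le\lambda/\mu$ (valid since $c\ge 0$), this yields both compactness facts I need.

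First, if $x\notin K_\mu$ then $(x,a)\notin G_\mu$ for all $a$, so $c(x,a^{(n)})>\mu$, giving $z^{(n)}(\X\setminus K_\mu)\le\lambda/\mu$ for all $n$; as $\mu\to\infty$ this shows $\{z^{(n)}\}$ is tight, hence relatively compact in $\P(\X)$ by Prokhorov's theorem (Parthasarathy~\cite[Chapter~II]{Part}). Second, fixing any $\mu>\lambda$, Markov's inequality gives $z^{(n)}(\{x:\ c(x,a^{(n)})\le\mu\})\ge 1-\lambda/\mu>0$, so this set is nonempty; choosing $x^{(n)}$ in it yields $(x^{(n)},a^{(n)})\in G_\mu$, whence $a^{(n)}\in L_\mu$ for all $n$, so $\{a^{(n)}\}$ lies in the fixed compact set $L_\mu$.

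Passing to a subsequence, I obtain $z^{(n)}\to z$ weakly in $\P(\X)$ and $a^{(n)}\to a$ in $\A$, hence $(z^{(n)},a^{(n)})\to(z,a)$ in $\P(\X)\times\A$. Lower semi-continuity of $\c$ gives $\c(z,a)\le\liminf_{n\to\infty}\c(z^{(n)},a^{(n)})\le\lambda$, so $(z,a)$ belongs to the level set. Thus each level set of $\c$ is sequentially compact, hence compact, and $\c$ is inf-compact on $\P(\X)\times\A$. The step that requires the most care is the passage from the integral bound on $\c$ to the uniform tightness of $\{z^{(n)}\}$: this is exactly where the $a$-independence of the enclosing compact $K_\mu=\pi_\X(G_\mu)$ is indispensable, since the individual small-cost sets $\{x:\ c(x,a^{(n)})\le\mu\}$ vary with $n$.
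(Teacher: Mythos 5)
Your proof is correct, and it differs from the paper's at exactly one substantive point: how relative compactness of the measures $\{z^{(n)}\}_{n=1,2,\ldots}$ is obtained. The paper's proof follows the same overall skeleton as yours (sequential compactness of the level sets of $\c$, a compactness argument for the actions, and closure of the level set via lower semi-continuity of $\c$ from Lemma~\ref{lem:lsc}), but for tightness it introduces the marginal $\psi(x)=\inf_{a\in\A}c(x,a)$ on $\X_{<+\infty}:=\X\setminus\X_{+\infty}$, where $\X_{+\infty}:=\{x\in\X\,:\,c(x,a)=+\infty\mbox{ for all }a\in\A\}$, invokes Feinberg et al.~\cite[Corollary~3.2]{FKZ} for the inf-compactness of $\psi$, and then cites the moment-function tightness criterion of Hern\'andez-Lerma and Lasserre~\cite[Proposition~E.8]{HLerma1} together with Prohorov's theorem; this forces the extra bookkeeping of showing $z^{(n)}(\X_{+\infty})=0$, passing to $\P(\X_{<+\infty})$, and re-attaching the limit measure to $\P(\X)$. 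You replace all of this by the normalization $c\ge 0$ (legitimate, since an $\R$-valued inf-compact function is bounded below --- a fact the paper itself records just before the theorem --- and shifting by a constant merely translates the level sets of $c$ and of $\c$), Markov's inequality, and the observation that every $a$-section $\{x\in\X:\,c(x,a)\le\mu\}$ lies in the fixed compact projection $K_\mu$ of the level set $\{c\le\mu\}$ onto $\X$, which yields $z^{(n)}(\X\setminus K_\mu)\le\lambda/\mu$ uniformly in $n$ and hence tightness directly, with no marginal function, no external corollary, and no $\X_{+\infty}$ case split (points of $\X_{+\infty}$ are automatically outside $K_\mu$). Your argument for the actions --- a nonempty small-cost slice for $\mu>\lambda$ forces $a^{(n)}\in L_\mu$, a compact projection --- is essentially the paper's, which picks $x^{(k)}$ with $c(x^{(k)},a^{(n_k)})\le\lambda$ and appeals to inf-compactness of $c$. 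What the paper's route buys is reuse of machinery that appears elsewhere in this literature (the inf-compact marginal and the moment criterion); what yours buys is a shorter, self-contained, and arguably cleaner tightness argument, at the small cost of the preliminary reduction to nonnegative $c$.
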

\begin{proof}
Let $c:\X\times\A\to \R$ be an inf-compact function on $\X\times\A$.
Fix an arbitrary $\lambda\in \mathbb{R}$. To prove that the level
set $\D_{\c}(\lambda;\P(\X)\times\A)=\{(z,a)\in\P(\X)\times\A:\,
{\bar c}(z,a)\le\lambda\}$ is compact, consider an arbitrary
sequence $\{z^{(n)},a^{(n)}\}_{n=1,2,\ldots}\subset
\D_{\c}(\lambda;\P(\X)\times\A)$. It is enough to show that
$\{z^{(n)},a^{(n)}\}_{n=1,2,\ldots}$ has a limit point $(z,a)\in
\D_{\c}(\lambda;\P(\X)\times\A)$.

Let us show that the sequence of probability measures
$\{z^{(n)}\}_{n=1,2,\ldots}$ has a limit point $z\in\P(\X)$. Define 
$\X_{<{+\infty}}:=\X\setminus \X_{+\infty}$, where $
\X_{+\infty}:=\{x\in\X\,:\, c(x,a)={+\infty}\mbox{ for all
}a\in\A\}$.

The inequalities
\begin{equation}\label{eq:inf-comp}
\int_{\X}c(x,a^{(n)})z^{(n)}(dx)\le \lambda,\quad n=1,2,\dots,
\end{equation}
imply that $z^{(n)}(\X_{+\infty})=0$ for any $n=1,2,\dots\ .$  Thus (\ref{eq:inf-comp}) transforms into
\begin{equation}\label{eq:inf-comp1}
\int_{\X_{<{+\infty}}}c(x,a^{(n)})z^{(n)}(dx)\le \lambda, \qquad
n=1,2,\dots\ .
\end{equation}
By definition of inf-compactness, the function $c:\X\times\A\to
\R$ is inf-compact on $\X_{<{+\infty}}\times\A$. According to
Feinberg et al.~\cite[Corollary~3.2]{FKZ},  the real-valued
function 
$\psi(x)=\inf\limits_{a\in\A}c(x,a)$,
$x\in\X_{<{+\infty}}$, with values in $\mathbb{R}$,
is inf-compact on $\X_{<{+\infty}}$. Furthermore,
(\ref{eq:inf-comp1}) implies that $
\int_{\X_{<{+\infty}}}\psi(x)z^{(n)}(dx)\le \lambda,$ 
$n=1,2,\dots\ . $ Thus, Hern\'{a}ndez-Lerma and
Lasserre~\cite[Proposition~E.8]{HLerma1} and Prohorov's
Theorem~\cite[Theorem~E.7]{HLerma1}, yield relative compactness of
the sequence $\{z^{(n)}\}_{n=1,2,\ldots}$ in
$\P(\X_{<{+\infty}})$. Thus there exists a subsequence
$\{z^{(n_k)}\}_{k=1,2,\ldots}\subseteq \{z^{(n)}\}_{n=1,2,\ldots}$
and a probability measure $z\in \P(\X_{<{+\infty}})$ such that
$z^{(n_k)}$ converges to $z$ in $\P(\X_{<{+\infty}})$. Let us set
$z( \X_{+\infty})=0$. As $z^{(n)}(\X_{+\infty})=0$ for all
$n=1,2,\dots$, then the sequence of probability measures
$\{z^{(n_k)}\}_{k=1,2,\ldots}$ converges weakly  and its limit
point $z$ belongs to $\P(\X)$.

The sequence $\{a^{(n_k)}\}_{k=1,2,\ldots}$ has a limit point
$a\in\A$. Indeed,  inequality (\ref{eq:inf-comp}) implies that for
any $k=1,2,\dots$ there exists at least one $x^{(k)}\in\X$ such
that $c(x^{(k)},a^{(n_k)})\le \lambda$. The inf-compactness of
$c:\X\times\A\to \R$ on $\X\times\A$ implies that
$\{a^{(k)}\}_{k=1,2,\ldots}$ has a limit point $a\in\A$. To finish
the proof note that Lemma~\ref{lem:lsc}, generalized
Fatou's Lemma~\ref{lem:F}(ii), 
and (\ref{eq:inf-comp}) imply that $ \int_\X c(x,a)z(dx)\le \lambda. $
\end{proof}



\begin{proof}[Proof of Theorem~\ref{th:wstar}] If $c$ is bounded below on $\X\times\A$, then formula (\ref{eq:c}) implies
that $\c$ is bounded below on $\P(\X)\times\A$ by the same lower
bound as $c.$ Thus, it is enough to prove the $\K$-inf-compactness
of $\c$ on $\P(\X)\times\A$.

Let a sequence of probability measures $\{z^{(n)} \}_{
n=1,2,\ldots}$ on $\X$ weakly converges to $z\in\P(\X)$. Consider
an arbitrary sequence $\{a^{(n)} \}_{ n=1,2,\ldots}\subset \A$
satisfying the condition that the sequence
$\{\c(z^{(n)},a^{(n)})\}_{n=1,2,\ldots}$ is bounded above. Observe
that $\{a^{(n)} \}_{ n=1,2,\ldots}$ has a limit point $a\in \A.$
Indeed, boundedness  below of the $\R$-valued function $c$ on
$\X\times\A$ and generalized Fatou's
Lemma~\ref{lem:F}(ii) 
imply that for some $\lambda<{+\infty}$
\begin{equation}\label{eq:k1}
\int_{\X}\underline{c}(x)z(dx)\le \ilim\limits_{n\to{\infty}}
\int_{\X}c(x,a^{(n)})z^{(n)}(dx)\le\lambda,
\end{equation}
where
\begin{equation}\label{eq:k2}
\underline{c}(x):=\ilim\limits_{y\to x,\,n\to{\infty}}
c(y,a^{(n)}).
\end{equation}
Inequality (\ref{eq:k1}) implies the existence of $x^{(0)}\in\X$
such that $\underline{c}(x^{(0)})\le\lambda$. Therefore, formula
(\ref{eq:k2}) implies the existence of a subsequence $\{a^{(n_k)}
\}_{k=1,2,\ldots}\subseteq\{a^{(n)} \}_{ n=1,2,\ldots}$ and a
sequence $\{y^{(k)} \}_{k=1,2,\ldots}\subset\X$ such that $
y^{(k)}\to x^{(0)}$ as $k\to{\infty}$ and $
c(y^{(k)},a^{(n_k)})\le \lambda+1$
for $k=1,2,\ldots\ .$  
Since $c:\X\times\A\to \R$ is $\K$-inf-compact on $\X\times\A$,
the sequence $\{a^{(n_k)} \}_{k=1,2,\ldots}$ has a limit point
$a\in\A$, which is the limit point of the initial sequence
$\{a^{(n)} \}_{ n=1,2,\ldots}$. Thus, the function
$\bar{c}$ is $\K$-inf-compact on $\P(\X)\times\A.$ 
\end{proof}

 Arguments similar to the proof of Theorem~\ref{th:wstar} imply
 the
inf-compactness of $\c(z,a)$ in $a\in\A$ for any $z\in\P(\X)$, if
$c(x,a)$ is inf-compact in $a\in\A$ for any $x\in\X$.

\begin{theorem}\label{cor:inf-HL}
If the function $c(x,a)$ is inf-compact in $a\in\A$ for each
$x\in\X$ and bounded below on $\X\times\A$, then the function
$\c(z,a)$ 
is inf-compact in $a\in\A$ for each $z\in\P(\X)$ and bounded 
below on $\P(\X)\times\A$.
\end{theorem}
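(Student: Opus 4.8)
The plan is to fix $z\in\P(\X)$ and a finite $\lambda$, and to prove that the level set $L_\lambda=\{a\in\A:\c(z,a)\le\lambda\}$ is compact by establishing its sequential compactness; since $\A$ is metrizable, this is equivalent to compactness. Boundedness below of $\c$ is immediate: formula (\ref{eq:c}) integrates $c$ against a probability measure, so $\c(z,a)\ge -K$ whenever $c\ge -K$ on $\X\times\A$, and the same constant serves as a lower bound.

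The heart of the argument follows the scheme of the proof of Theorem~\ref{th:wstar}, but specialized so that only the pointwise inf-compactness in $a$ is invoked. Take a sequence $\{a^{(n)}\}_{n=1,2,\ldots}\subseteq L_\lambda$. Since $c$ is bounded below, the ordinary Fatou lemma applied to the fixed measure $z$ yields
\[
\int_\X\ilim_{n\to\infty}c(x,a^{(n)})\,z(dx)\le\ilim_{n\to\infty}\int_\X c(x,a^{(n)})\,z(dx)\le\lambda.
\]
Because $z$ is a probability measure, this inequality forces the existence of a single point $x^{(0)}\in\X$ with $\ilim_{n\to\infty}c(x^{(0)},a^{(n)})\le\lambda$: otherwise $\ilim_{n\to\infty}c(x,a^{(n)})-\lambda>0$ for every $x$, and integrating this strictly positive integrand against $z$ would contradict the displayed bound. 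Passing to a subsequence, I may therefore assume $c(x^{(0)},a^{(n_k)})\le\lambda+1$ for all $k$.

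Now I would invoke the hypothesis that $c(x^{(0)},\cdot)$ is inf-compact on $\A$: the level set $\{a\in\A:c(x^{(0)},a)\le\lambda+1\}$ is compact and contains the tail $\{a^{(n_k)}\}_{k=1,2,\ldots}$, so a further subsequence $\{a^{(n_{k_j})}\}_{j=1,2,\ldots}$ converges to some $a\in\A$. It then remains to check $a\in L_\lambda$, i.e. that $\c(z,\cdot)$ is lower semi-continuous in $a$. This follows because inf-compactness of $c(x,\cdot)$ implies its lower semi-continuity in $a$ for every $x$, and Fatou's lemma then gives
\[
\c(z,a)=\int_\X c(x,a)\,z(dx)\le\int_\X\ilim_{j\to\infty}c(x,a^{(n_{k_j})})\,z(dx)\le\ilim_{j\to\infty}\c(z,a^{(n_{k_j})})\le\lambda .
\]
Hence $a\in L_\lambda$, every sequence in $L_\lambda$ has a limit point in $L_\lambda$, and $\c(z,\cdot)$ is inf-compact.

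The main obstacle, and the reason the argument departs from the proof of Theorem~\ref{th:wstar} rather than copying it, is that the present hypothesis supplies only inf-compactness of $c$ in the action variable at each fixed state, not $\K$-inf-compactness on $\X\times\A$. The generalized Fatou inequality of Lemma~\ref{lem:F}(ii) would produce a limit point $x^{(0)}$ together with a sequence $y^{(k)}\to x^{(0)}$ along which $c(y^{(k)},a^{(n_k)})$ stays bounded, and controlling a \emph{varying} $y^{(k)}$ would require the full strength of $\K$-inf-compactness. Using the ordinary Fatou lemma for the fixed measure $z$ keeps the state argument pinned at the single point $x^{(0)}$, which is precisely what allows the weaker pointwise inf-compactness to close the proof.
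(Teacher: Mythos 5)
Your proof is correct and takes essentially the same route as the paper's: the classic Fatou lemma applied with the fixed measure $z$ produces a single state $x^{(0)}$ with $\liminf_{n\to\infty}c(x^{(0)},a^{(n)})\le\lambda$, and the pointwise inf-compactness of $c(x^{(0)},\cdot)$ then yields the convergent subsequence, exactly as in the paper. Your only addition is the explicit closedness check that the limit point remains in the level set (lower semi-continuity of $\c(z,\cdot)$ in $a$ via Fatou and the lower semi-continuity of each $c(x,\cdot)$), a step the paper leaves implicit; this is a useful completion of the same argument rather than a different one.
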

\begin{proof}  Fix $z\in\P(\X)$ and consider a sequence $\{a^{(n)}\}_{n=1,2,\ldots}$
in $\A$ such that $c(z,a^{(n)})\le \lambda$ for some
$\lambda<\infty,$  ${n=1,2,\ldots}\ .$  The classic Fatou's lemma
implies that (\ref{eq:k1}) holds with $z^{(n)}=z$, ${n=1,2,\ldots},$
and $\underline{c}(x)=\liminf_{n\to{\infty}} c(x,a^{(n)}),$
$x\in\X.$ Thus, there exists $x^{(0)}\in\X$ such that
$\liminf_{n\to{\infty}} c(x^{(0)},a^{(n)})\le \lambda$. This
together with the inf-compactness of $c(x^{(0)},a)$ in $a\in\A$
implies that the sequence $\{a^{(n)}\}_{n=1,2,\ldots}$ has a limit
point in $\A$.
\end{proof}

\begin{proof}[Proof of Theorem~\ref{main1}]
Theorem~\ref{main1} follows from Theorems~\ref{teor4.3}, \ref{th:wstar}, and \ref{th:contqqq2}.
\end{proof}


\section{Combining Assumption~{\bf {\bf(H)}} and the Weak
Continuity~of~$H$}\label{S7}

Theorem~\ref{main1} assumes either the weak continuity of $H$ or Assumption~{\bf {\bf(H)}} together with the setwise continuity of $R'$. For some
applications, see e.g., Subsection~\ref{S8.2} that deals with inventory control, the filtering kernel $H$ satisfies Assumption~{\bf {\bf(H)}} for some
observations and it is weakly continuous for other observations. The following theorem is applicable to such situations.

\begin{theorem}\label{cor:main}
Let the observation space $\Y$ be partitioned into two disjoint
subsets $\Y_1$ and $\Y_2$ such that $\Y_1$ is open in $\Y$. If the
following assumptions hold:
\begin{enumerate}[(a)]
\item the stochastic kernels $P$ on $\X$ given $\X\times\A$ and
$Q$ on $\Y$ given $\A\times\X$ are weakly continuous; \item the
measure $R'(\,\cdot\,|z,a)$ on $(\Y_2,\B(\Y_2))$ is setwise
continuous in $(z,a)\in \P(\X)\times \A,$ that is, for every
sequence $\{(z^{(n)},a^{(n)})\}_{n=1,2,\ldots}$  in
$\P(\X)\times\A$ converging to $(z,a)\in\P(\X)\times\A$  and for
every $C\in\B(\Y_2),$ we have
$R'(C|z^{(n)},a^{(n)})\to R'(C|z,a);$ 

 \item there exists a stochastic kernel
$H$ on $\X$ given $\P(\X)\times\A\times\Y$ satisfying (\ref{3.4})
such that:

(i) the stochastic kernel $H$ on $\X$ given
$\P(\X)\times\A\times\Y_1$ is weakly continuous;

(ii)  Assumption {\rm\bf({\bf H})} holds on $\Y_2$, that is, if a
sequence $\{z^{(n)}\}_{n=1,2,\ldots}\subseteq\P(\X)$ converges
weakly to $z\in\P(\X)$ and a sequence
$\{a^{(n)}\}_{n=1,2,\ldots}\subseteq\A$ converges to $a\in\A$,
then there exists a subsequence $\{(z^{(n_k)},a^{(n_k)})\}_{k=1,2,\ldots}\subseteq
\{(z^{(n)},a^{(n)})\}_{n=1,2,\ldots}$ and a measurable subset $C$ of $\Y_2$ such that
$R'(\Y_2\setminus C|z,a)=0$ and  
$H(z^{(n_k)},a^{(n_k)},y)$  converges weakly to
$H(z,a,y)$ for all $y\in C$; 
\end{enumerate}
then the stochastic kernel $q$ on $\P(\X)$ given $\P(\X)\times\A$
is weakly continuous.  If, in addition to the above conditions,
assumptions (a) and (b) from Theorem~\ref{main1} hold, then the
COMDP $(\P(\X),\A,q,\c)$ satisfies Assumption {\rm\bf(${\rm \bf
W^*}$)} and therefore statements (i)--(vi) of
Theorem~\ref{teor4.3} hold.

\end{theorem}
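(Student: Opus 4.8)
The plan is to establish the weak continuity of $q$ exactly as in the proof of Theorem~\ref{th:contqqq2}: by the lower semi-continuity characterization of weak continuity (Parthasarathy, Billingsley), it suffices to show $\ilim_{n\to\infty}q(D|z^{(n)},a^{(n)})\ge q(D|z,a)$ for every open set $D\subseteq\P(\X)$ and all $z^{(n)}\to z$ weakly, $a^{(n)}\to a$. I would argue by contradiction, fixing such a $D$ and a subsequence along which $q(D|z^{(n)},a^{(n)})\le q(D|z,a)-\varepsilon^*$ for some $\varepsilon^*>0$. Writing $f^{(n)}(y)=\h\{H(z^{(n)},a^{(n)},y)\in D\}$ and $f(y)=\h\{H(z,a,y)\in D\}$, formula (\ref{3.7}) represents $q(D|\cdot)$ as $\int_\Y f^{(n)}(y)\,R'(dy|z^{(n)},a^{(n)})$, and the central idea is to split this integral along the partition $\Y=\Y_1\cup\Y_2$ and treat the two pieces with the two different Fatou lemmas of Lemma~\ref{lem:F}.

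For the open piece $\Y_1$, I would first use that the weak continuity of $P$ and $Q$ makes $R'$ weakly continuous (Hern\'andez-Lerma~\cite[p.~92]{HLerma}, as in the proof of Theorem~\ref{th:contqqq2}). Set $h^{(n)}(y)=f^{(n)}(y)\h\{y\in\Y_1\}$. Since $\Y_1$ is open, each $y\in\Y_1$ has a neighborhood contained in $\Y_1$, so for $y'\to y\in\Y_1$ the weak continuity of $H$ on $\P(\X)\times\A\times\Y_1$ (assumption (c)(i)) gives $H(z^{(n)},a^{(n)},y')\to H(z,a,y)$ weakly, and openness of $D$ yields $\ilim_{n,\,y'\to y}h^{(n)}(y')\ge f(y)\h\{y\in\Y_1\}$; for $y\in\Y_2$ the lower bound is $0$ since $h^{(n)}\ge 0$. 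Applying Lemma~\ref{lem:F}(ii) to $h^{(n)}$ and the weakly convergent measures $R'(\,\cdot\,|z^{(n)},a^{(n)})$ gives $\int_{\Y_1}f(y)\,R'(dy|z,a)\le\ilim_{n}\int_{\Y_1}f^{(n)}(y)\,R'(dy|z^{(n)},a^{(n)})$, and this estimate holds along every subsequence.

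For the closed piece $\Y_2$, I would pass to the subsequence furnished by Assumption {\bf(H)} on $\Y_2$ (assumption (c)(ii)), along which $H(z^{(n)},a^{(n)},y)\to H(z,a,y)$ weakly for $R'(\,\cdot\,|z,a)$-almost every $y\in\Y_2$; openness of $D$ then gives $\ilim_n f^{(n)}(y)\ge f(y)$ for $R'(\,\cdot\,|z,a)$-a.e.\ $y\in\Y_2$. By assumption (b) the restrictions of $R'(\,\cdot\,|z^{(n)},a^{(n)})$ to $\Y_2$ converge setwise to the restriction of $R'(\,\cdot\,|z,a)$, so the setwise Fatou Lemma~\ref{lem:F}(i) yields $\int_{\Y_2}f(y)\,R'(dy|z,a)\le\ilim_n\int_{\Y_2}f^{(n)}(y)\,R'(dy|z^{(n)},a^{(n)})$. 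Adding the two estimates and using superadditivity of $\ilim$ gives $\ilim_n q(D|z^{(n)},a^{(n)})\ge q(D|z,a)$ along this subsequence, which contradicts the defining inequality for $\varepsilon^*$; hence $q$ is weakly continuous. The final assertion then follows by combining this with Theorem~\ref{th:wstar} (which transfers the $\K$-inf-compactness of $c$ to $\c$, using assumption (b) of Theorem~\ref{main1}) to verify Assumption~{\bf(${\rm \bf W^*}$)} for the COMDP, and invoking Theorem~\ref{teor4.3}.

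I expect the main obstacle to be the bookkeeping around the two convergence modes. Lemma~\ref{lem:F}(i) is stated for probability measures, whereas the restrictions to $\Y_2$ are only subprobability measures; I would therefore either invoke the finite-measure version of the setwise Fatou lemma or renormalize, using that $R'(\Y_2|z^{(n)},a^{(n)})\to R'(\Y_2|z,a)$ (the $C=\Y_2$ case of assumption (b)) and handling the degenerate zero-mass case separately. A second delicate point is extracting a single subsequence along which both the weak-convergence estimate on $\Y_1$ and the setwise estimate on $\Y_2$ hold simultaneously; since the $\Y_1$ estimate survives passage to any subsequence, it suffices to extract the subsequence dictated by Assumption {\bf(H)} on $\Y_2$ and check that the two lower bounds combine as claimed.
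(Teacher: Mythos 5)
Your proposal is correct and follows essentially the same route as the paper's proof: the contradiction setup via the lower semi-continuity criterion, the split of the integral in \eqref{3.7} over $\Y_1\cup\Y_2$, Lemma~\ref{lem:F}(ii) with the weak continuity of $R'$ and the openness of $\Y_1$ and $D$ for the first piece, and Assumption~\textbf{(H)} on $\Y_2$ with the setwise Fatou Lemma~\ref{lem:F}(i) for the second. Both of the delicate points you flag are resolved in the paper exactly as you anticipate: it normalizes $R'$ on $\Y_2$ by $R'(\Y_2|z,a)$ (treating the case $R'(\Y_2|z,a)=0$ separately, and using $R'(\Y_2|z^{(n)},a^{(n)})\to R'(\Y_2|z,a)>0$ from assumption (b)), and it needs only a $\limsup$ bound on the $\Y_2$ piece along the sub-subsequence furnished by Assumption~\textbf{(H)}, combined with the subsequence-stable $\liminf$ bound on $\Y_1$ — the same superadditivity bookkeeping you describe.
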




\begin{proof} 
%
The stochastic kernel $q(dz'|z,a)$ on $\P(\X)$ given $\P(\X)\times \A$ is weakly continuous if and only if for every open set $D$ in $\P(\X)$ the function
$q(D|z,a)$ is lower semi-continuous in $(z,a)\in \P(\X)\times \A$; Billingsley \cite[Theorem 2.1]{Bil}.  Thus, if $q$ is not weakly continuous,
 there exist an open set $D$ in $\P(\X)$ and  sequences
$z^{(n)}\to z$ weakly  and  $a^{(n)}\to a$, where $z,z^{(n)}\in \P(\X)$ and $a,a^{(n)}\in\A$, $n=1,2,\ldots,$  such that
\begin{equation*}
\ilim_{n\to\infty}q(D|z^{(n)},a^{(n)})< q(D|z,a).
\end{equation*}
%
Then there exists $\varepsilon^*>0$ and a subsequence $\{z^{(n,1)},a^{(n,1)}\}_{n=1,2,\ldots}\subseteq\{z^{(n)},a^{(n)}\}_{n=1,2,\ldots}$  such that for
all $n=1,2,\ldots$
\begin{multline}\label{ceq:q2111}
\int_{\Y_1}\h\{H(z^{(n,1)},a^{(n,1)},y)\in D\}R'(dy|z^{(n,1)},a^{(n,1)})+\int_{\Y_2}\h\{H(z^{(n,1)},a^{(n,1)},y)\in D\}R'(dy|z^{(n,1)},a^{(n,1)})\\
\qquad=q(D|z^{(n,1)},a^{(n,1)})\le q(D|z,a)-\varepsilon^*\quad\quad\quad\quad\quad\quad\quad\quad\quad\quad\quad\quad\quad\quad\quad\quad\quad\quad
\end{multline}\noindent \[ =\int_{\Y_1}\h\{H(z,a,y)\in D\}R'(dy|z,a)+\int_{\Y_2}\h\{H(z,a,y)\in
D\}R'(dy|z,a)-\varepsilon^*,
\]
where the stochastic kernel $H$ on $\X$ given
$\P(\X)\times\A\times\Y$ satisfies (\ref{3.4}) and assumption (c)
of Theorem~\ref{cor:main}.

Since $\Y_1$ is an open set in $\Y$ 
%
and the stochastic kernel $H$ on $\X$ given $\P(\X)\times\A\times\Y_1$ is weakly continuous, for all $ y\in \Y_1 $
\begin{equation}\label{eq:corm3}
\ilim_{\substack{n\to\infty \\ y'\to y}}\h\{H(z^{(n,1)},a^{(n,1)},y')\in D\}= \ilim_{\substack{n\to\infty \\ y'\to y,\,y'\in \Y_1}}
\h\{H(z^{(n,1)},a^{(n,1)},y')\in D\}\ge \h\{H(z,a,y)\in D\}.
\end{equation}

The weak continuity of the stochastic kernels $P$ and $Q$ on $\X$
given $\X\times\A$ and on $\Y$ given $\A\times\X$ respectively
imply the weak continuity of the stochastic kernel $R'$ on $\Y$ given $\P(\X)\times\A$; Hern\'{a}ndez-Lerma \cite[p.~92]{HLerma}. Therefore,
\begin{equation}
\begin{aligned}\label{eq:corm1}
&\ilim\limits_{n\to\infty}\int_{\Y_1}\h\{H(z^{(n,1)},a^{(n,1)},y)\in D\}R'(dy|z^{(n,1)},a^{(n,1)})\\
&\ge \int_{\Y_1}\ilim\limits_{n\to\infty, \, y'\to y}\h\{H(z^{(n,1)},a^{(n,1)},y')\in D\}R'(dy|z^{(n,1)},a^{(n,1)})\\ &\ge \int_{\Y_1}\h\{H(z,a,y)\in
D\}R'(dy|z,a),
\end{aligned}
\end{equation}
where the first inequality follows from Lemma~\ref{lem:F}(ii) and the second one follows from formula (\ref{eq:corm3}).

The inequality
\begin{equation}\label{eq:corm2}
\slim\limits_{n\to\infty}\int_{\Y_2}\h\{H(z^{(n,1)},a^{(n,1)},y)\in
D\}R'(dy|z^{(n,1)},a^{(n,1)})\ge \int_{\Y_2}\h\{H(z,a,y)\in
D\}R'(dy|z,a)
\end{equation}
together with (\ref{eq:corm1}) 
contradicts (\ref{ceq:q2111}).  This contradiction implies that 
$q(\,\cdot\,|z,a)$ is a weakly continuous stochastic kernel on
$\P(\X)$ given $\P(\X)\times\A$.

To complete the proof of Theorem~\ref{cor:main}, we prove inequality (\ref{eq:corm2}). If $R'(\Y_2|z,a)=0$, then inequality (\ref{eq:corm2}) holds. Now let
$R'(\Y_2|z,a)>0$. Since $R'(\Y_2|z^{(n,1)},a^{(n,1)})\to R'(\Y_2|z,a)$ as $n\to\infty$, there exists $N=1,2,\ldots$ such that
$R'(\Y_2|z^{(n,1)},a^{(n,1)})>0$ for any $n\ge N$. We endow $\Y_2$ with the same metric as in $\Y$ and set
\[
R_1'(C|z',a'):=\frac{R'(C|z',a')}{R'(\Y_2|z',a')}, \quad z'=z,z^{(n,1)},\ a'=a,a^{(n,1)}, \ n\ge N,\ C\in \B(\Y_2).
\]
Assumption (b) of Theorem~\ref{cor:main} means that the stochastic
kernel $R_1'(dy|z,a)$ on $\Y_2$ given $\P(\X)\times\A$ is setwise
continuous. Assumption (ii) of Theorem~\ref{cor:main} implies the 
existence of a subsequence
$\{z^{(n,2)},a^{(n,2)}\}_{n=1,2,\ldots}\subseteq
\{z^{(n,1)},a^{(n,1)} \}_{n=1,2,\ldots}$ and a measurable subset
$C$ of $\Y_2$ such that $R_1'(\Y_2\setminus C|z,a)=0$ and $
H(z^{(n,2)},a^{(n,2)},y)$ converges weakly to $H(z,a,y)$ as
$n\to\infty$ for all $y\in C$.
Therefore, since $D$ is an open set in $\P(\X)$, we have
\begin{equation}\label{ceq:q0000}
\ilim_{k\to\infty}\h\{H(z^{(n,2)},a^{(n,2)},y)\in D\}\ge \h\{H(z,a,y)\in D\},\qquad y\in C.
\end{equation}
Formulas (\ref{3.7}), (\ref{ceq:q0000}), the setwise continuity of the stochastic kernel $R_1'$ on $\Y_2$ given $\P(\X)\times\A$, and Lemma~\ref{lem:F}(i)
imply 
\begin{multline*}
\frac{1}{R'(\Y_2|z,a)} \ilim\limits_{k\to\infty}\int_{\Y_2}\h\{H(z^{(n,2)},a^{(n,2)},y)\in D\}R'(dy|z^{(n,2)},a^{(n,2)})\\ \ge
\ilim\limits_{k\to\infty}\frac{\int_{\Y_2}\h\{H(z^{(n,2)},a^{(n,2)},y)\in D\}R'(dy|z^{(n,2)},a^{(n,2)})}{R'(\Y_2|z^{(n,2)},a^{(n,2)})}\ge
\frac{\int_{\Y_2}\h\{H(z,a,y)\in D\}R'(dy|z,a)}{R'(\Y_2|z,a)},
\end{multline*}
and thus (\ref{eq:corm2}) holds.
\end{proof}


\begin{corollary}\label{cor:main1}
Let the observation space $\Y$ be partitioned into two disjoint subsets $\Y_1$ and $\Y_2$ such that $\Y_1$ is  open in $\Y$ and $\Y_2$ is countable. If the
following assumptions hold:
\begin{enumerate}[(a)]
\item the stochastic kernels $P$ on $\X$ given $\X\times\A$ and
$Q$ on $\Y$ given $\A\times\X$ are weakly continuous; \item
$Q(y|a,x)$ is a continuous function on $\A\times\X$ for each $y\in
\Y_2;$ \item there exists a stochastic kernel $H$ on $\X$ given
$\P(\X)\times\A\times\Y$ satisfying (\ref{3.4}) such that the
stochastic kernel $H$ on $\X$ given $\P(\X)\times\A\times\Y_1$ is
weakly continuous;
\end{enumerate}
then assumptions (b) and (ii) of Theorem~\ref{cor:main} hold, and the stochastic kernel $q$ on $\P(\X)$ given $\P(\X)\times\A$
is weakly continuous.  If, in addition to the above conditions,
assumptions (a) and (b) from Theorem~\ref{main1} hold, then the
COMDP $(\P(\X),\A,q,\c)$ satisfies Assumption {\rm\bf(${\rm \bf
W^*}$)} and therefore statements (i)--(vi) of
Theorem~\ref{teor4.3} hold.
\end{corollary}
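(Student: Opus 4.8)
The plan is to deduce assumptions (b) and (ii) of Theorem~\ref{cor:main} from the hypotheses of the corollary; since assumption (a) of Theorem~\ref{cor:main} coincides with assumption (a) here and assumption (c)(i) of Theorem~\ref{cor:main} is assumption (c) here, Theorem~\ref{cor:main} will then yield the weak continuity of $q$, and, after adjoining assumptions (a),(b) of Theorem~\ref{main1}, Theorem~\ref{th:wstar} gives the $\K$-inf-compactness of $\c$, so that the COMDP satisfies Assumption \textbf{(${\rm \bf W^*}$)} and Theorem~\ref{teor4.3} applies.

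The first step is a pointwise continuity fact. For each $y\in\Y_2$ put $g_y(x,a):=\int_\X Q(y|a,x')P(dx'|x,a)$, so that $R'(y|z,a)=\int_\X g_y(x,a)z(dx)$ by (\ref{3.5}). Since $Q(y|\cdot,\cdot)$ is bounded by $1$ and jointly continuous on $\A\times\X$ (assumption (b)) and $P$ is weakly continuous (assumption (a)), along any $(x^{(n)},a^{(n)})\to(x,a)$ the functions $Q(y|a^{(n)},\cdot)$ converge continuously to $Q(y|a,\cdot)$; applying Lemma~\ref{lem:F}(ii) to $Q(y|a^{(n)},\cdot)$ and to $1-Q(y|a^{(n)},\cdot)$ gives both a $\ilim$ and a $\slim$ bound, hence $g_y$ is continuous and bounded on $\X\times\A$. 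Running the same argument once more, now with $z^{(n)}\to z$ weakly and $a^{(n)}\to a$, shows that $R'(y|z,a)$ is continuous in $(z,a)\in\P(\X)\times\A$ for every $y\in\Y_2$.

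The second step establishes assumption (b) of Theorem~\ref{cor:main}, the setwise continuity of $R'$ on $\Y_2$, and this is the crux of the argument. Fix $z^{(n)}\to z$ weakly and $a^{(n)}\to a$. The kernel $R'$ on $\Y$ is weakly continuous (Hern\'{a}ndez-Lerma \cite[p.~92]{HLerma}), and $\Y_2$ is closed in $\Y$ precisely because $\Y_1$ is open; hence the portmanteau theorem (Billingsley \cite[Theorem~2.1]{Bil}) gives $\slim_{n\to\infty}R'(\Y_2|z^{(n)},a^{(n)})\le R'(\Y_2|z,a)$. On the other hand, Fatou's lemma for the counting measure on the countable set $\Y_2$, combined with the atomwise convergence $R'(y|z^{(n)},a^{(n)})\to R'(y|z,a)$ from the first step, gives $\ilim_{n\to\infty}R'(\Y_2|z^{(n)},a^{(n)})\ge R'(\Y_2|z,a)$. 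Thus the total $\Y_2$-masses converge; together with convergence of every atom this yields, by Scheffé's lemma on the countable space $\Y_2$, that $R'(\,\cdot\,|z^{(n)},a^{(n)})$ converges to $R'(\,\cdot\,|z,a)$ in the total variation on $\Y_2$, in particular setwise. It is exactly here that the openness of $\Y_1$ is indispensable: it prevents mass from escaping from $\Y_2$ into $\Y_1$ in the limit, which is what upgrades atomwise convergence to setwise convergence.

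The third step verifies assumption (ii) of Theorem~\ref{cor:main}. Set $C:=\{y\in\Y_2:\,R'(y|z,a)>0\}$, so $R'(\Y_2\setminus C|z,a)=0$. Since $\{y\}$ is an atom, (\ref{3.4}) gives $H(B|z,a,y)R'(y|z,a)=R(B\times\{y\}|z,a)$, so for $y\in C$ the probability measure $H(\,\cdot\,|z,a,y)$ is the normalization of $\nu_y(\,\cdot\,):=R(\,\cdot\,\times\{y\}|z,a)$, and the same identity holds along the sequence once $R'(y|z^{(n)},a^{(n)})>0$, which occurs for all large $n$ by the first step. For fixed bounded continuous $f$ on $\X$, the map $(x,a)\mapsto\int_\X f(x')Q(y|a,x')P(dx'|x,a)$ is continuous and bounded by the same reasoning as in the first step, whence $\int_\X f\,dR(\,\cdot\,\times\{y\}|z^{(n)},a^{(n)})\to\int_\X f\,d\nu_y$; thus $R(\,\cdot\,\times\{y\}|z^{(n)},a^{(n)})$ converges weakly to $\nu_y$ with converging positive total masses, and therefore $H(\,\cdot\,|z^{(n)},a^{(n)},y)$ converges weakly to $H(\,\cdot\,|z,a,y)$ for every $y\in C$ (no passage to a subsequence being required). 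This is assumption (ii), and an appeal to Theorem~\ref{cor:main} completes the proof.
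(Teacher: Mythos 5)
Your proof is correct and follows essentially the same route as the paper: both verify assumptions (b) and (ii) of Theorem~\ref{cor:main}, obtaining (ii) from the atomwise identity $H(B|z,a,y)=R(B\times\{y\}|z,a)/R'(y|z,a)$ together with continuity of numerator and denominator (via repeated application of weak continuity of $P$ and continuity of $Q(y|\cdot,\cdot)$), and (b) by combining the atomwise convergence $R'(y|z^{(n)},a^{(n)})\to R'(y|z,a)$ with an upper bound on the total mass of $\Y_2$ coming from its closedness in $\Y$. The only differences are cosmetic: where you invoke the portmanteau theorem for the weakly continuous kernel $R'$ and Scheff\'e's lemma on the countable set $\Y_2$, the paper derives upper semi-continuity of $R'(\Y_2|z,a)$ from the upper semi-continuity of $Q(\Y_2|a,x)$ and generalized Fatou's lemma, and then reuses the uniform-tail argument from the proof of Corollary~\ref{cor:inf-HL123}.
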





\begin{proof} To prove the corollary,
it is sufficient to verify  conditions (b) and (ii) of
Theorem~\ref{cor:main}. 
For each $B\in\B(\X)$ and for each $y\in\Y_2$, Hern\'{a}ndez-Lerma
\cite[Proposition~C.2(b), Appendix~C]{HLerma}, being  repeatedly
applied to formula \eqref{3.3} with $C=\{y\},$ implies  the
continuity of $R(B\times\{y\}|z,a)$ in $(z,a)\in\P(\X)\times\A$.
 In particular, the function $R'(y|\,\cdot\, ,\,
\cdot\,)$ is continuous on $\P(\X)\times\A$.  If $R'(\,y\,|z,a)>0$
then, in view of \eqref{3.4},
$H(B|z,a,y)=R(B\times\{y\}|z,a)/R'(\,y\,|z,a)$, and, if $y$ is
fixed, this function is continuous at the point $(z,a).$ Thus,
condition (ii) of Theorem~\ref{cor:main} holds.  Since the set
$\Y_2$ is closed in $\Y$, the function $Q(\Y_2|a,x)$ is upper
semi-continuous in $(a,x)\in \A\times\X.$
  Generalized Fatou's Lemma~\ref{lem:F}, being
repeatedly applied to \eqref{3.5} with $C=\Y_2$, implies that
$R'(\Y_2|z,a)$ is upper semi-continuous in
$(z,a)\in\P(\X)\times\A$.  This implies that, for every
$Y\subseteq\Y_2$ and for every sequence $\{
z^{(n)},a^{(n)}\}_{n=1,2,\ldots}\subset \P(\X)\times\A$ converging
to $(z,a)\in \P(\X)\times\A,$
\[|R'(Y|z^{(n)},a^{(n)})-R'(Y|z,a)|\le
\sum_{y\in\Y_2}|R'(y|z^{(n)},a^{(n)})-R'(y|z,a)|\to 0\quad {\rm
as} \quad n\to\infty,
\]
where the convergence takes place because of the same arguments as
in the proof of Corollary 3.8.  Thus, condition (b) of
Theorem~\ref{cor:main} holds.
\end{proof}


\section{Examples of Applications}\label{S8}
To illustrate theoretical results, they are applied in this
section to three particular models: (i) problems defined by
stochastic equations; see Striebel~\cite{Strieb},
Bensoussan~\cite{Bens2}, and Hern\'{a}ndez-Lerma
\cite[p.~83]{HLerma}, (ii) inventory control, and (iii) Markov
Decision Model with incomplete information.

\subsection{Problems Defined by Stochastic Equations} \label{S8.1}
Let $\{\xi_t\}_{t=0,1,\ldots}$ be a sequence of identically
distributed finite random variables with values in $\mathbb{R}$
and with the distribution $\mu.$
 Let $\{\eta_t\}_{t=0,1,\ldots}$ be a sequence of
random variables uniformly distributed on $(0,1)$. An initial
state $x_0$ is a  random variable with values in $\mathbb{R}$. It
is assumed that the random variables $x_0,\xi_0,
\eta_0,\xi_1,\eta_1,\ldots$ are defined on the same probability space and mutually independent. 

Consider a stochastic partially observable control system
\begin{equation}\label{eqe1}
x_{t+1}=F(x_{t},a_{t},\xi_{t}),\,\, t=0,1,\dots,
\end{equation}
\begin{equation}\label{eqe2}
y_{t+1}=G(a_{t},x_{t+1},\eta_{t+1}),\,\, t=0,1,\dots,
\end{equation}
where $F$ and $G$ are given measurable functions from
$\mathbb{R}\times\mathbb{R}\times \mathbb{R}$ to $\mathbb{R}$ and
from $\mathbb{R}\times\mathbb{R}\times (0,1)$ to $\mathbb{R}$
respectively. The initial observation is $y_0=G_0(x_0,\eta_0),$
where $G_0$ is a measurable function from $\mathbb{R}\times (0,1)$
to $\mathbb{R}$. The states $x_t$ are not observable, while the
states $y_t$ are observable. The goal is to minimize the expected
total discounted costs.

Instead of presenting formal definitions of functions $a_t$, we
describe the above problem  as a POMDP with the state space
$\X=\mathbb{R}$, observation space $\Y=\mathbb{R}$, and action
space $\A=\mathbb{R}$. The transition law is
\begin{equation}\label{ex1dp}
P(B|x,a)=\int_{\mathbb{R}} \h\{F(x,a,s)\in B\}\mu(ds),\quad B\in
\mathcal{B}(\mathbb{R}),\ x\in\mathbb{R}, \ a\in\mathbb{R}.
\end{equation}
The observation kernel is
\[
Q(C|a,x)=\int_{(0,1)} \h\{G(a,x,s)\in C\}\lambda(ds),\quad C\in
\mathcal{B}(\mathbb{R}),\ a\in\mathbb{R}, \ x\in\mathbb{R},
\]
where $\lambda\in \P((0,1))$ is the Lebesgue measure on $(0,1)$.
The initial state distribution $p$ is the distribution of the
random variable $x_0$, and the initial observation kenel
$Q_0(C|x)=\int_{(0,1)}{\bf I}\{G_0(x,s)\in C\}\lambda(ds)$ for all
$C\in\B(\R)$ and for all $x\in\X.$

Assume that  $(x,a)\to F(x,a,s)$ is a continuous mapping on
$\mathbb{R}\times\mathbb{R}$ for $\mu$-a.e. $s\in \mathbb{R}$.
Then the stochastic kernel $P(dx'|x,a)$ on $\mathbb{R}$ given
$\mathbb{R}\times\mathbb{R}$ is weakly continuous;
Hern\'{a}ndez-Lerma \cite[p.~92]{HLerma}. 

Assume that: (i) $G$ is a continuous mapping on
$\mathbb{R}\times\mathbb{R}\times (0,1)$, (ii)  the partial
derivative  $g(x,y,s)=\frac{\partial G(x,y,s)}{\partial s}$ exists
everywhere and is  continuous, and (iii) there exists a constant
$\beta>0$ such that $|g(a,x,s)|\ge \beta$ for all
$a\in\mathbb{R}$, $x\in\mathbb{R}$, and $s\in (0,1)$.
Denote by $\g$ the inverse function for $G$ with respect the last variable. Assume that $\g$ is continuous.


Let us prove that under these assumptions the 
observation kernel $Q$ on $\mathbb{R}$ given $\mathbb{R}\times\mathbb{R}$ is continuous in the total variation.
For each $\varepsilon \in (0,\frac12)$, for each Borel set $C\in
\B(\mathbb{R})$, and for all
$(a',x'),(a,x)\in\mathbb{R}\times\mathbb{R}$
\begin{equation*}
\begin{aligned}
&\left|Q(C|a',x')-Q(C|a,x)\right| = \left|\int_{0}^{1}
\h\{G(a',x',s)\in C\}\lambda(ds)-\int_{0}^{1} \h\{G(a,x,s)\in
C\}\lambda(ds)\right|
\\
&\le 4\varepsilon+ \left|\int_{\varepsilon}^{1-\varepsilon}
\h\{G(a',x',s)\in
C\}\lambda(ds)-\int_{\varepsilon}^{1-\varepsilon} \h\{G(a,x,s)\in
C\}\lambda(ds)\right|
\\
&=4\varepsilon+  \left|\int_{G(a',x',[\varepsilon,1-\varepsilon])} \frac{\h\{\tilde{s}\in
C\}}{g(a',x',\g(a',x',\tilde{s}))}\tilde{\lambda}(d\tilde{s})-\int_{G(a,x,[\varepsilon,1-\varepsilon])} \frac{\h\{\tilde{s}\in
C\}}{g(a,x,\g(a,x,\tilde{s}))}\tilde{\lambda}(d\tilde{s})\right|
\\
&\le 4\varepsilon+ \frac{\left|G(a',x',\varepsilon)- G(a,x,\varepsilon)\right|+\left|G(a',x',1-\varepsilon)- G(a,x,1-\varepsilon)\right|}{\beta}
\\
&+\frac{1}{\beta^2}\int_{G(a,x,[\varepsilon,1-\varepsilon])\cap G(a',x',[\varepsilon,1-\varepsilon])}
\left|{g(a',x',\g(a',x',\tilde{s}))}-{g(a,x,\g(a,x,\tilde{s}))}\right|\tilde{\lambda}(d\tilde{s}),
\end{aligned}
\end{equation*}
where $\tilde{\lambda}$ is the Lebesgue measure on $\mathbb{R}$,
the second equality holds because of the changes
$\tilde{s}=G(a',x',s)$ and $\tilde{s}=G(a,x,s)$ in the
corresponding integrals, and the second inequality follows from
direct estimations. Since, the function $G$ is  continuous, %
$G(a',x',\varepsilon)\to G(a,x,\varepsilon)$ and
$G(a',x',1-\varepsilon)\to G(a,x,1-\varepsilon)$ as
$(a',x')\to(a,x)$, for any $(a,x,\varepsilon)\in
\mathbb{R}\times\mathbb{R}\times (0,\frac12)$.  Thus, if
\begin{equation}\label{eq:DefddD}
\int_{\mathbb{R}} D(a,x,a',x',\varepsilon,\tilde{s})\tilde{\lambda}(d\tilde{s})\to 0\qquad {\rm as}\ (a',x')\to (x,a),
\end{equation}
where
$D(a,x,a',x',\varepsilon,\tilde{s}):=\left|{g(a',x',\g(a',x',\tilde{s}))}-{g(a,x,\g(a,x,\tilde{s}))}\right|$,
when $\tilde{s}\in G(a,x,[\varepsilon,1-\varepsilon])\cap
G(a',x',[\varepsilon,1-\varepsilon])$, and
$D(a,x,a',x',\varepsilon,\tilde{s})=0$ otherwise, then
\[\lim\limits_{(a',x')\to(a,x)}\sup\limits_{C\in\B(\mathbb{R})}\left|Q(C|a',x')-Q(C|a,x)\right|= 0.\]

So, to complete the proof of the continuity in the total variation
of the observation kernel $Q$ on $\mathbb{R}$ given
$\mathbb{R}\times\mathbb{R}$, it is sufficient to verify
\eqref{eq:DefddD}. We fix an arbitrary vector
$(a,x,\varepsilon)\in\mathbb{R}\times\mathbb{R}\times(0,\frac12)$
and consider arbitrary converging sequences
 $a^{(n)}\to a$ and $x^{(n)}\to x$. 
Let
$(a',x')=(a^{(n)},x^{(n)})$, $n=1,2,\ldots\ .$ Since the sets
$K:=\{(a^{(n)},x^{(n)})\,:\, n=1,2,\ldots\}\cup\{(a,x)\}$ and
$[\varepsilon,1-\varepsilon]$ are compact and the function $g$ is
continuous on $\mathbb{R}\times\mathbb{R}\times (0,1)$, the
function $|g|$ is bounded above on the compact set $K\times
[\varepsilon,1-\varepsilon]$ by a positive constant $M$. Thus, the
integrand in \eqref{eq:DefddD} is bounded above by $2M$ on the
compact set $G(K\times [\varepsilon, 1-\varepsilon])$ and is equal
to  $0$ on its complement. Since $G$, $g,$ and $\g$ are continuous
functions, for each ${\tilde s}\in\mathbb{R}$ the integrand in
\eqref{eq:DefddD} converges
 to 0 as  $(a',x')\to (a,x)$. Therefore, \eqref{eq:DefddD}
follows from the dominated convergence theorem, because the
Lebeasgue measure of the set $G(K\times [\varepsilon,
1-\varepsilon])$ is finite since this set is compact.

Finally, we assume that the one-period cost function $c:\mathbb{R}\times\mathbb{R}\to\R$ is bounded below and $\K$-inf-compact. Thus, the assumptions  of
Theorem~\ref{main} are satisfied.
   Therefore, for this COMDP there exists a stationary optimal policy, the optimality
equations hold, and  value  iterations converge to the optimal
value.

We remark that the one-dimensional Kalman filter in discrete time
satisfies the above assumptions. In this case,
$F(x_t,a_t,\xi_t)=d^*x_t+b^*a_t+\xi_t$ and
$G(a_t,x_{t+1},\eta_{t+1})=h^*x_{t+1}+c^*\Phi^{-1}(\eta_{t+1}),$
where $c^*\ne 0$ and $\Phi^{-1}$ is the inverse to the cumulative
distribution function of a standard normal distribution
($\Phi^{-1}(\eta_{t+1})$ is a standard normal random variable). In
particular,
$|g(a,x,s)|=|c^*|(2\pi)^{\frac{1}{2}}e^{\frac{\Phi^{-1}(s)^2}{2}}\ge
|c^*|(2\pi)^{\frac{1}{2}} >0$ for all
$s\in [0,1]$. 
Thus, if the cost function $c(x,a)$ is $\K$-inf-compact, then the
conclusions of Theorem~\ref{main} hold for the Kalman filter.  In
particular, the quadratic cost function $c(x,a)=c_1x^2+c_2a^2$ is
$\K$-inf-compact if $c_1\ge 0$ and $c_2>0$.  Thus, the linear
quadratic Gaussian control problem is a particular case of this
model. The one-step cost functions $c(x,a)=(a-x)^2$ and
$c(x,a)=|x-a|,$ which are typically used for identification
problems, are also $\K$-inf-compact.  However, these two functions
are not inf-compact. This illustrates the usefulness of the notion
of $\K$-inf-compactness.

\subsection{Inventory Control with Incomplete Records}\label{S8.2}  This example is
motivated by Bensoussan et al. \cite{Bens3}--\cite{Bens5}, where
several inventory  control  problems for periodic review systems,
when the Inventory Manager (IM) may not have complete information
about inventory levels, are studied. In Bensoussan et
al.~\cite{Bens3}, \cite{Bens5}, a  problem with backorders is
considered.  In the model considered in \cite{Bens3}, the IM does
not know the inventory level, if it is nonnegative, and the IM
knows the inventory level, if it is negative. In the model
considered in \cite{Bens5},  the IM only knows  whether the
inventory level is negative or nonnegative.  In \cite{Bens4} a
problem with lost sales is studied, when the IM only knows whether
a lost sale took place or not. The underlying mathematical
analysis is summarized in \cite{BensOx}, where additional
references can be found.  The analysis includes transformations of
density functions of demand distributions.

The current example studies periodic review systems with backorders and lost sales, when some inventory levels are observable and some are not.  The goal
is to minimize the expected total costs.  Demand distribution  may not have densities.

In the case of full observations, we model the problem as an MDP
with a state space $\X=\mathbb{R}$ (the current inventory level),
action space $\A=\mathbb{R}$ (the ordered amount of inventory),
and action sets $\A(x)=\A$ available at states $x\in \X$. If in a
state $x$ the amount of inventory $a$ is ordered, then the
holding/backordering cost $h(x)$,  ordering cost $C(a),$ and lost
sale cost $G(x,a)$ are incurred, where it is assumed that $h,$
$C,$ and $G$ are
nonnegative lower semi-continuous functions 
with values in ${\R}$ and $C(a)\to +\infty$ as $|a|\to \infty.$
Observe that the one-step cost function $c(x,a)=h(x)+C(a)+G(x,a)$
is $\K$-inf-compact on $\X \times \A$.  Typically $G(x,a)=0$ for
$x\ge 0.$

Let $D_t, t = 0,1,\ldots,$ be i.i.d. random variables with the distribution function $F_D$, where $D_t$ is the demand at epoch $t=0,1,\ldots\ .$  The
dynamics of the system is defined by $x_{t+1}=F(x_t,a_t,D_t),$ where $x_t$ is the current inventory level  and $a_t$ is the ordered (or scrapped) inventory
at epoch
$t=0,1,\ldots\ .$ 
For problems with backorders $F(x_t,a_t,D_t)=x_t+a_t-D_t$ and for
problems with lost sales $F(x_t,a_t,D_t)=|x_t+a_t-D_t|^+$.  In
both cases, $F$ is a continuous function defined on
$\mathbb{R}^3$.  To simplify and unify the presentation, we do not
follow the common agreement that
 $\X=[0,\infty)$
for models with lost sales. 
However,
 for problems with lost sales it is assumed that the initial state distribution $p$ is concentrated on $[0,\infty)$, and this implies that states $x<0$ will never
be visited. We assume that the distribution function $F_D$ is atomless (an equivalent assumption is that the function $F_D$ is continuous).
The state transition law $P$ on $\X$ given $\X\times\A$ is
\begin{equation}
\label{eq:P} P(B|x,a)=\int_{\mathbb{R}}\h\{F(x,a,s)\in B\}dF_D(s),\quad B\in \B(\X),\ x\in\X, \ a\in\A.
\end{equation}
Since we do not assume that demands are nonnegative, this model also covers cash balancing problems and problems with returns; see Feinberg and
Lewis~\cite{FL} and references therein.  In a particular case, when $C(a)=+\infty$ for $a<0$,  orders with negative sizes are infeasible, and, if an order
is placed, the ordered amount of inventory should be positive.

As mentioned above, some states (inventory levels) $x\in\X=\mathbb{R}$ are observable and some are not.  Let inventory be stored in containers. From a
mathematical prospective, containers are  elements of a finite or countably infinite partition of $\X=\mathbb{R}$ into disjoint convex sets, and each of
these sets is not a singleton. In other words, each
 container $B_{i+1}$  is an interval (possibly open, closed, or
semi-open) with ends $d_i$ and $d_{i+1}$ such that $-\infty\le
d_i<d_{i+1}\le +\infty$, and the union of these disjoint intervals
is $\mathbb{R}.$ In addition, we assume that
$d_{i+1}-d_i\ge\gamma$ for some constant $\gamma>0$ for all
containers, that is, the sizes of all the containers are uniformly
bounded below by a positive number.  We also follow an agreement
that the 0-inventory level belongs to a container with end points
$d_0$ and $d_1$, and a container with end points $d_i$ and
$d_{i+1}$ is labeled as the $(i+1)$-th container $B_{i+1}$. Thus,
container $B_1$ is the interval in the partition containing point
0. Containers' labels can be nonpositive.  If there is a container
with the smallest (or largest) finite label $n$ then
$d_{n-1}=-\infty$ (or $d_n=+\infty$, respectively).  If there are
containers with  labels $i$ and $j$ then there are containers with
all the labels between $i$ and $j$. In addition each container is
either transparent or nontransparent. If the inventory level $x_t$
belongs to a nontransparent container, the IM only  knows which
container the inventory level belongs to. If an inventory level
$x_t$ belongs to a transparent container, the IM knows that the
amount of inventory is exactly $x_t$.

For each nontransparent container with end points $d_i$ and
$d_{i+1}$, we fix an arbitrary point $b_{i+1}$ satisfying
$d_i<b_{i+1}<d_{i+1}$. For example, it is possible to set
$b_{i+1}=0.5d_i+0.5d_{i+1},$ when $\max\{|d_i|, |d_{i+1}|\}<\infty.$
If an inventory level belongs to a nontransparent container $B_{i}$,
the IM observes $y_t=b_i.$ Let $L$ be the set of labels of the
nontransparent containers.  We set $Y_L=\{b_i\,:\,i\in L\}$ and
define the observation set $\Y=\T\cup Y_L$, where $\T$ is the union
of all transparent containers $B_i$ (transparent elements of the
partition).  If the observation $y_t$ belongs to a transparent
container (in this case, $y_t\in\T$), then the IM knows that the
inventory level $x_t=y_t$. If $y_t\in Y_L$ (in this case, $y_t=b_i$
for some $i$), then the IM knows that the inventory
level belongs to the container $B_i$, 
and this container is nontransparent. Of course, the distribution
of this level can be computed. 

Let $\rho$ be the Euclidean distance on $\mathbb{R}:$ $\rho
(a,b)=|a-b|$ for $a,b\in \Y$. On the state space $\X=\mathbb{R}$ we
consider the metric $\rho_\X(a,b)=|a-b|,$ if $a$ and $b$ belong to
the same container, and $\rho_\X(a,b)=|a-b|+1$ otherwise, where
$a,b\in \X$. The space $(\X,\rho_\X)$ is a Borel subset of a Polish
space (consisting of closed containers, that is, each finite point
$d_i$ is represented by two points: one belonging to the container
$B_i$ and another one to the container $B_{i+1}$).
 We notice that
 $\rho_\X(x^{(n)},x)\to 0$ as $n\to\infty$  if and only if $|x^{(n)}-x|\to
0$ as $n\to\infty$ and the sequence $\{x^{(n)}\}_{n=N,N+1,\ldots}$
belongs to the same container as $x$ for a sufficiently large $N$.
Thus, convergence on $\X$ in the metric $\rho_{\X}$ implies
convergence in the Euclidean metric.  In addition, if $x\ne d_i$
for all containers $i$, then $\rho_\X(x^{(n)},x)\to 0$ as
$n\to\infty$ if and only if $|x^{(n)}-x|\to 0$ as $n\to\infty.$
Therefore, for any open set $B$ in $(\X,\rho_\X)$, the set
$B\setminus(\cup_i\{d_i\})$ is open in $(\X,\rho).$  We notice
that each container $B_i$ is an open and closed set in
$(\X,\rho_\X).$

Observe that the state transition law $P$ given by \eqref{eq:P} is weakly continuous in $(x,a)\in\X\times\A$. Indeed, let $B$ be an open set in
$(\X,\rho_\X)$ and  $\rho_\X(x^{(n)},x)\to 0$ and $|a^{(n)}-a|\to 0$ as $n\to\infty$. The set $B^\circ:=B\setminus(\cup_i\{d_i\})=B\cap (\cup_i
(d_i,d_{i+1}))$ is open in $(\X,\rho).$ Since $F$ (as a function from $(\X,\rho_\X)\times(\A,\rho)\times(\mathbb{R},\rho)$ into $(\X,\rho_\X)$) is a
continuous function in the both models, with backorders and lost sales, Fatou's lemma yields
\begin{multline*}
\liminf_{n\to\infty}P(B^\circ|x^{(n)},a^{(n)})=\liminf_{n\to\infty}\int_{\mathbb{R}}\h\{F(x^{(n)},a^{(n)},s)\in B^\circ\}dF_D(s)\\ \ge
\int_{\mathbb{R}}\liminf_{n\to\infty}\h\{F(x^{(n)},a^{(n)},s)\in B^\circ\}dF_D(s)\ge \int_{\mathbb{R}}\h\{F(x,a,s)\in B^\circ\}dF_D(s)= P(B^\circ|x,a).
\end{multline*}
Therefore, $ \liminf_{n\to\infty}P(B|x^{(n)},a^{(n)})\ge P(B|x,a)
$ because for the model with backorders $P(x^*|x',a')=0$ for all
$x^*,x',a'\in \mathbb{R}$ in view of the continuity of the
distribution function $F_D$, and, for the model with {lost sales},
$P(x^*|x',a')=0$ for any $x',a'\in \mathbb{R}$ and $x^*\ne 0$, and
$P(0|x',a')=1-F_D(x'+a')$ is continuous in $(x',a')\in
\X\times\A$. Since $B$ is an arbitrary open set in $(\X,\rho_\X)$,
the stochastic kernel $P$ on $\X$ given $\X\times\A$ is weakly
continuous. Therefore,
$\limsup_{n\to\infty}P(B|x^{(n)},a^{(n)})\le P(B|x,a)$, for any
closed set $B$ in $(\X,\rho_\X)$.  Since any container $B_i$ is
simultaneously open and closed in $(\X,\rho_{\X})$, we have
$P(B_i|x^{(n)},a^{(n)}) \to P(B_i|x,a)$ as $n\to\infty$.

Set $\Psi(x)=x,$ if the inventory level $x$ belongs to a transparent container, and $\Psi(x)=b_i,$ if the inventory level belongs to a nontransparent
container $B_i$ with a label $i$. As follows from the definition of the metric $\rho_\X$, the function $\Psi:(\X,\rho_\X)\to (\Y,\rho)$ is continuous.
Therefore, the observation kernels $Q_0$  on $\Y$ given $\X$ and $Q$  on $\Y$ given $\A\times\X$, $Q_0(C|x):=Q(C|a,x):=\h\{\Psi(x)\in C\}$, $C\in \B(\Y)$,
$a\in\A$, $x\in\X$, are weakly continuous.

If all the containers are nontransparent, the observation set
$\Y=Y_L$ is countable,  and conditions of
Corollary~\ref{cor:inf-HL123} hold.  In particular, the function
$Q(b_i|a,x)=\h\{x\in B_i\}$ is continuous, if the metric $\rho_\X$
is considered on $\X.$ If some containers are transparent and some
are not, the conditions of Corollary~\ref{cor:main1} hold. To
verify this, we set $\Y_1:=\T$ and $\Y_2:=Y_L$ and note that
$\Y_2$ is countable  and the function $Q(b_i|x)={\bf I}\{x\in
B_i\}$ is continuous for each $b_i\in Y_L$ because $B_i$ is open
and closed in $(\X,\rho_\X).$ Note that $H(B|z,a,y)=P( B|y,a)$ for
any $B\in \B(\X)$, $C\in\B(\Y)$, $z\in\P(\X)$, $a\in \A$, and
$y\in\T$. The kernel $H$ is weakly continuous on
$\P(\X)\times\A\times \Y_1$. In addition, $\T=\cup_i B^t_i$, where
$B_i^t$ are transparent containers, is an open set in
$(\X,\rho_\X).$ Thus, if either Assumption ({\bf D}) or Assumption
({\bf P}) holds, then POMDP ($\X$, $\Y$, $\A$, $P$, $Q$, $c$)
satisfies the assumptions of Corollary~\ref{cor:main1}. Thus, for
the corresponding COMDP, there are stationary optimal policies,
optimal policies satisfy the optimality equations, and value
iterations converge to the optimal value.

The models studied in Bensoussan et al. \cite{Bens3, Bens4, Bens5} correspond to the partition $B_1=(-\infty,0]$ and $B_2=(0,+\infty)$   with the container
$B_2$ being nontransparent and with the container $B_1$ being either nontransparent (backordered amounts are not known \cite{Bens5}) or transparent (models
with lost sales \cite{Bens4},
 backorders are observable \cite{Bens3}). 
Note that, since $F_D$ is atomless, the probability that
$x_t+a_t-D_t=0$ is $0$, $t=1,2,\ldots\ .$

The model provided in this subsection is applicable to other inventory control problems, and the conclusions of Corollary~\ref{cor:main1} hold for them
too. For example, for problems with backorders, a nontransparent container $B_{0}=(-\infty,0)$ and a transparent container $B_1=[0, +\infty)$ model a
periodic review inventory control system for which nonnegative inventory levels are known, and, when the inventory level is negative,  it is known that
they are backorders, but their values are unknown.

\subsection{Markov Decision Model  with Incomplete
Information (MDMII) }\label{8.3} An MDMII is a particular version of a POMDP studied primarily before the POMDP model was introduced in its current formulation.  The reduction of MDMIIs with Borel state and action sets to MDPs was described  by  Rhenius~\cite{Rh} and
Yushkevich~\cite{Yu}; see also   Dynkin and  Yushkevich~\cite[Chapter 8]{DY}.  MDMIIs with  transition probabilities having densities  were studied by Rieder~\cite{Ri}; see also B\"auerle and Rieder~\cite[Part II]{BR}. An MDMII is
defined by an \textit{observed state space} $\Y$, an \textit{unobserved state space} $\W$, an \textit{action space} $\A$, nonempty \textit{sets of
available actions} $A(y),$ where $y\in\Y$,  a stochastic kernel $P$ on $\Y\times\W$ given $\Y\times\W\times\A$, and a one-step  cost function $c:\, G\to
\R,$ where $G=\{(y,w,a)\in \Y\times\W\times\A:\, a\in A(y)\}$ is the graph of the mapping $A(y,w)=A(y),$ $(y,w)\in \Y\times\W.$ Assume that:

(i) $\Y$, $\W$ and $\A$ are Borel subsets of Polish spaces. For
all $y\in \Y$ a nonempty Borel subset $A(y)$ of $\mathbb{A}$
represents the \textit{set of actions} available at $y;$

(ii) the graph  of the mapping $A:\Y\to 2^\A$, defined as $ {\rm Gr} ({A})=\{(y,a) \, : \,  y\in \Y, a\in A(y)\}$ is measurable, that is, ${\rm Gr}(A)\in
{\mathcal B}(\Y\times\A)$, and this graph allows a measurable selection, that is,  there exists a measurable mapping $\phi:\Y\to \mathbb{A}$ such that
$\phi(y)\in A(y)$ for all $y\in \Y$;

(iii) the transition kernel $P$ on $\X$ given $\Y\times\W\times\A$
is weakly continuous in $(y,w,a)\in \Y\times\W\times\A$;

(iv) the one-step cost $c$ is $\K$-inf-compact on $G$, that is,
for each compact set $K\subseteq\Y\times\W$ and for each
$\lambda\in \mathbb{R}$, the set ${\cal
D}_{K,c}(\lambda)=\{(y,w,a)\in G:\, c(y,w,a)\le\lambda\}$ is
compact.

Let us define $\X=\Y\times\W,$ and for $x=(y,w)\in \X$ let us
define $Q(C|x)= {\bf I}\{y\in C\}$ for all $C\in {\cal B}(\Y).$
Observe that this $Q$ corresponds to the  continuous function $y=
F(x),$ where $F(y,w)=y$ for all $x=(y,w)\in\X$ (here $F$ is a
projection of $\X=\Y\times\W$ on $\Y$).  Thus, as explained in
Example~\ref{exa1}, the stochastic kernel $Q(dy|x)$ is weakly
continuous in $x\in\X.$ Then by definition, an MDMII is a POMDP with
the state space $\X$, observation set $\Y$, action space $\A$,
available action sets $A(y)$, transition kernel $P$, observation
kernel $Q(dy|a,x):=Q(dy|x)$, and one-step cost function $c$.
However, this model differs from our basic definition of a POMDP
because action sets $A(y)$ depend on observations and one-step
costs $c(x,a)=c(y,w,a)$ are not defined when $a\notin A(y).$ To
avoid this difficulty, we set $c(y,w,a)=+\infty$ when $a\notin
A(y)$.  The extended function $c$ is $\K$-inf-compact on
$\X\times\A$ because the set ${\cal D}_{K,c}(\lambda)$ remains
unchanged for each $K\subseteq\Y\times\W$ and for each
$\lambda\in\mathbb{R}.$

Thus, an MDMII is a special case of a POMDP $(\X,\Y, \A,P,Q,c)$,
when $\X=\Y\times\W$ and observation kernels $Q$ and $Q_0$ are
defined by the projection of $\X$ on $\Y.$ The observation kernel
$Q(\,\cdot\,|x)$ is weakly continuous in $x\in \X$. As Example
\ref{exa1} demonstrates, in general this  is not sufficient for
the weak continuity of $q$ and therefore for the existence of
optimal policies.  The following example confirms this conclusion
for MDMIIs by demonstrating even the stronger assumption, that $P$
is setwise continuous, is not sufficient for the weak continuity
of the transition probability $q$.

\begin{example}\label{exa7.1} Setwise  continuity of a transition
probability $P$ on $\X$ given $\X\times\A$ for an MDMII is not sufficient for the weak continuity of the transition probability $q$ for the corresponding
COMDP. {\rm Set $\W=\{1,2\}$, $\Y=[0,1]$, $\X=\Y\times\W$, and $\A=\{0\}\cup\{\frac1n : {n=1,2,\dots}\}$. Let  $m$ be the Lebesgue measure on $\Y=[0,1]$
and $m^{(n)}$ be an absolutely continuous measure on $\Y=[0,1]$ with the density $f^{(n)}$ defined in (\ref{eqfnex1}).
As shown in Example~\ref{exa1}, the sequence of probability
measures $\{m^{(n)}\}_{n=1,2,\ldots}$ converges setwise to the
Lebesgue measure $m$ on $\Y=[0,1]$. Recall that $Q(C|a,y,w)={\bf
I}\{y\in C\}$ for $C\in\B(\Y)$. In this example, the setwise
continuous transition probability $P$ is chosen to satisfy the
following properties: $P(B|y,w,a)=P(B|w,a)$ for all $B\in\B(\X)$,
$y\in\Y,$ $w\in\W,$ $a\in\A$, that is, the transition
probabilities do not depend on observable states, and
$P(\Y\times\{w'\}|w,a)=0$, when $w'\ne w$ for all $w,w'\in\W$,
$a\in\A$, that is, the unobservable states do not change. For
$C\in \B(\Y)$, $w\in\W$, and $a\in \A$, we set
\[
P( C\times \{w\}|w,a)=\left\{
\begin{array}{ll}
m^{(n)}(C), &  \,w=2, \, a=\frac1n,\, n=1,2,\dots;\\
m(C),&\mbox{otherwise.}
\end{array}
\right.
\]
Fix $z\in\P(\X)$ defined by
\[
z(C\times \{w\})=0.5(\h\{w=1\}+\h\{w=2\})m(C),\quad w\in\W,\,
C\in\B(\Y).
\]
Direct calculations according to formulas (\ref{3.3})--(\ref{3.7})
imply that for $C,C'\in\B(\Y)$ and $w\in\W$
\[
R(C\times\{w\}\times C'|z,a)=
\left\{
\begin{array}{ll}
0.5 m^{(n)}(C\cap C'), &  \mbox{if }w=2\mbox{ and }a=\frac1n;\\
0.5m(C\cap C'), &  \mbox{otherwise:}
\end{array}
\right.
\]
which implies $R'(C'|z,\frac1n)=0.5 (m( C')+ m^{(n)}( C'))$,
$R'(C'|z,0)= m(C')$, and therefore we can choose 

\[H(C\times\{w\}|z,a,y)=\begin{cases} 0.5\,{\bf I}\{y\in  C\},
&{\rm if\ } a=0;\\{\bf I}\{y\in  C, f^{(n)}(y)=0\}+\frac{1}{3}{\bf
I}\{y\in C, f^{(n)}(y)=2\}, &{\rm if\ } w=1,\ a=\frac1n;\\
\frac{2}{3}{\bf I}\{y\in C, f^{(n)}(y)=2\}, &{\rm if\ } w=2,\ a=\frac1n;
\end{cases}
\]
%
%
%
where 
$ y\in \Y$ and $n=1,2,\ldots\ .$ The subset of atomic probability
measures on $\X$
\[
D:=\left\{z^{(y)}\in\P(\X)\,:\,  z^{(y)}( y,1)=\frac13, \ z^{(y)}(
y,2)=\frac23,\ y\in\Y\right\}
\]
is closed in $\P(\X)$. Indeed, an integral of  any bounded
continuous function $g$ on $\X$ with respect to a measure
$z^{(y)}\in D$ equals $\frac13g(y,1)+\frac23g(y,2)$, $y\in\Y.$
Therefore, a sequence $\{z^{(y^{(n)})}\}_{n=1,2,\ldots}$ of
measures from $D$ weakly converges to $z'\in\P(\X)$ if and only if
$y^{(n)}\to y\in\Y$ as $n\to\infty$ for some $y\in Y,$ and thus
$z'=z^{(y)}\in D.$ Since $D$ is a closed set in $\P(\X)$, if the
stochastic kernel $q$ on $\P(\X)$ given $\P(\X)\times A$ is weakly
continuous then $ \limsup_{n\to\infty} q(D|z,\frac1n)\le
q(D|z,0);$ Billingsley~\cite[Theorem 2.1(iii)]{Bil}. However,
$q(D|z,\frac1n)=z(f^{(n)}(y)=2)=0.5[m(f^{(n)}(y)=2)+m^{(n)}(f^{(n)}(y)=2)]=\frac34,$
$n=1,2,\ldots,$ and $q(D|z,0)=0.$ Thus, the stochastic kernel $q$
on $\P(\X)$ given $\P(\X)\times\A$ is not weakly
continuous.\EndPf}
\end{example}
%
%

Thus, the natural question is which conditions are needed for the
existence of optimal policies for the COMDP corresponding to an
MDMII? The first author of this paper learned about this question
from Alexander A. Yushkevich around the time when Yushkevich was
working on \cite{Yu}. The following theorem provides such a
condition.  For each open set $\oo$ in $\W$ and for any $C\in{\cal
B}(\Y)$, consider a family of functions
$\mathcal{P}^*_\oo=\{  (x,a)\to P(C\times\oo|x,a):\, C\in
\B(\Y)\}$ mapping $\X\times\A$ into $[0,1]$. Observe that
equicontinuity at all the points $(x,a)\in \X\times\A$ of the family
of functions $\mathcal{P}^*_\oo$ is a weaker assumption, than the
continuity of the stochastic kernel $P$ on $\X$ given $\X\times\A$
in the total variation.

\begin{theorem}\label{teor:Ren} Consider the  expected discounted cost criterion  with the discount factor $\alpha\in [0,1)$ and,
if the cost function $c$ is nonnegative, then $\alpha=1$ is also
allowed. If for each nonempty open set $\oo$ in $\W$ the family of
functions $\mathcal{P}^*_\oo$ is equicontinuous at all the points
$(x,a)\in \X\times\A$,
then the POMDP ($\X$,$\Y$,$\A$,$P$,$Q$,$c$) satisfies 
assumptions (a), (b), and (i) of Theorem~\ref{main1}, and
therefore the conclusions of that theorem hold.
\end{theorem}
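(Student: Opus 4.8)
The plan is to verify that the MDMII, regarded as the POMDP $(\X,\Y,\A,P,Q,c)$ with $\X=\Y\times\W$ and $Q(C|a,x)=\mathbf{I}\{y\in C\}$ for $x=(y,w)$, satisfies assumptions (a), (b), and (i) of Theorem~\ref{main1}, and then to invoke that theorem. Assumption (a) is immediate: the hypothesis on $\alpha$ together with the standing requirement that $c$ be bounded below (and nonnegative in the case $\alpha=1$) is exactly Assumption \textbf{(D)} or \textbf{(P)}. Assumption (b) was already established above, where the extension of $c$ by $+\infty$ off the graph $G$ is shown to be $\K$-inf-compact on $\X\times\A$. So the real work is to establish condition (i) of Theorem~\ref{main1}, namely that $R'$ is setwise continuous and that Assumption \textbf{(H)} holds.

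First I would record the explicit form of the kernels $R$ and $R'$ for an MDMII. Because $Q(C|a,x')=\mathbf{I}\{y'\in C\}$, formula (\ref{3.3}) gives, for every open $U$ in $\Y$ and open $V$ in $\W$,
\begin{equation*}
R((U\times V)\times C|z,a)=\int_{\X}P((U\cap C)\times V|x,a)\,z(dx),\qquad C\in\B(\Y),
\end{equation*}
and, with $U=\Y$, $V=\W$, that $R'(C|z,a)=\int_{\X}P(C\times\W|x,a)\,z(dx)$. Thus the family $\mathcal{R}_{U\times V}$ and the family $\{(z,a)\to R'(C|z,a):C\in\B(\Y)\}$ are obtained by integrating, against the belief $z$, the functions $(x,a)\to P(C'\times V|x,a)$ that appear in the hypothesis.

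The key device is to transport the assumed equicontinuity on $\X\times\A$ to equicontinuity on $\P(\X)\times\A$ by integration against $z$, using Theorem~\ref{kern} with the identity kernel --- the same step that appears in the proof of Lemma~\ref{corRsetmin}. For a fixed nonempty open $V$ in $\W$, the family $\mathcal{P}^*_V$ is by hypothesis equicontinuous and uniformly bounded by $1$, hence contained in $\mathbb{C}(\X\times\A)$; applying Theorem~\ref{kern} with $\S_1=\P(\X)$, $\S_2=\X$ (separable, being a Borel subset of a Polish space), $\S_3=\A$, $\oo=\X$, the weakly continuous kernel $\Psi(B|z)=z(B)$, and $\mathcal{A}_0=\mathcal{P}^*_V$, I obtain that the family $\{(z,a)\to\int_{\X}P(C'\times V|x,a)\,z(dx):C'\in\B(\Y)\}$ is equicontinuous at every $(z,a)$. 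Taking $V=\W$ identifies this family with $\{(z,a)\to R'(C|z,a):C\in\B(\Y)\}$, whose equicontinuity is exactly continuity of $R'$ in the total variation, and in particular the setwise continuity required in condition (i).

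For the remaining half I would apply Lemma~\ref{R-H} using the product base $\tau_b=\{\oo_\Y\times\oo_\W\}$ of $\X$ built from countable bases of $\Y$ and $\W$. Every finite intersection of base elements is again a product $U\times V$ of open sets, and the displayed formula exhibits $\mathcal{R}_{U\times V}$ as a subfamily (with $C'=U\cap C$) of the equicontinuous family produced above, so $\mathcal{R}_{U\times V}$ is equicontinuous at all $(z,a)$ (the case $V=\emptyset$ being trivial, since then $\mathcal{R}_{U\times V}=\{0\}$). Lemma~\ref{R-H} then yields Assumption \textbf{(H)}, and with (a), (b), (i) in hand, Theorem~\ref{main1} gives the conclusion. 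I expect the main obstacle to be the correct application of Theorem~\ref{kern}: one must recognize the deterministic ``identity'' kernel $\Psi(B|z)=z(B)$ as weakly continuous, check that the index family $\{(x,a)\to P(C'\times V|x,a):C'\in\B(\Y)\}$ is uniformly bounded and genuinely continuous so that Theorem~\ref{kern} applies, and verify that the target families $\mathcal{R}_{U\times V}$ and the $R'$-family arise as subfamilies of the lifted family, so that equicontinuity is inherited.
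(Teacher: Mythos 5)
Your proposal is correct and follows essentially the same route as the paper's proof: both write out $R((U\times V)\times C|z,a)=\int_{\X}P((U\cap C)\times V|x,a)\,z(dx)$ and $R'(C|z,a)=\int_{\X}P(C\times\W|x,a)\,z(dx)$, lift the hypothesized equicontinuity from $\X\times\A$ to $\P(\X)\times\A$ via Theorem~\ref{kern} with $\Psi(B|z)=z(B)$ (giving continuity of $R'$ in the total variation, hence setwise continuity), and then invoke Lemma~\ref{R-H} with the countable product base of $\X=\Y\times\W$, whose finite intersections are again open rectangles, to obtain Assumption \textbf{(H)}. The only cosmetic difference is that you apply Theorem~\ref{kern} once per open $V$ with $\mathcal{A}_0=\mathcal{P}^*_V$ and extract $\mathcal{R}_{U\times V}$ as the subfamily $C'=U\cap C$, whereas the paper feeds the pre-intersected family $\{(x,a)\to P((U\cap C)\times V|x,a):C\in\B(\Y)\}$ directly into Theorem~\ref{kern}; these are interchangeable, and your explicit handling of the case $V=\emptyset$ is a small point the paper leaves implicit.
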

\begin{proof}
Assumptions (a) and (b) of Theorem~\ref{main1} are obviously held,
and the rest of the proof verifies assumption (i).
From (\ref{3.3}) and (\ref{3.5}),
\[
R(C_1\times B\times C_2|z,a)=\int_{\X}P((C_1\cap C_2)\times
B|x,a)z(dx),\quad\ B\in \mathcal{B}(\W),\ C_1,C_2\in
\mathcal{B}(\Y),\ z\in\P(\X),\ a\in \A,
\]
\[
R'(C|z,a)=\int_{\X}P(C\times \W|x,a)z(dx),\qquad
 C\in \mathcal{B}(\Y),\ z\in\P(\X),\ a\in
\A.\] For any nonempty open sets $\oo_1$ in $\Y$ and $\oo_2$ in
$\W$ respectively, Theorem~\ref{kern}, with $\S_1 = \P(\X)$, $\S_2
= \X$, $\S_3 = \A$, $\oo = \X$, $\Psi(B | z) = z(B)$, and
$\mathcal{A}_0 = \{(x,a) \to P((\oo_1\cap C) \times \oo_2)|x,a): C
\in \B(\Y)\}$, implies the equicontinuity of the family of
functions
\[
\mathcal{R}_{\oo_1\times\oo_2}=\left\{(z,a)\to R(\oo_1\times
\oo_2\times C|z,a)\,:\, C\in\B(\Y)\right\},
\]
defined on $\P(\X) \times \A$, at all the points $(z,a) \in \P(\X)
\times \A$. Being applied to $\oo_1 = \Y$ and $\oo_2 = \W$, this
fact implies that the stochastic kernel $R'$ on $\Y$ given
$\P(\X)\times\A$ is continuous in the total variation. In
particular, the stochastic kernel $R'$ 
is setwise continuous.

Now, we show that Assumption {\bf {\bf(H)}} holds.  Since the metric spaces $\Y$ and $\W$ are separable, there exist countable bases $\tau_b^\Y$ and
$\tau_b^\W$ of the topologies for the separable metric spaces $\Y$ and $\W$, respectively. Then $\tau_b=\{\oo^\Y\times\oo^\W\,:\, \oo^\Y\in \tau_b^\Y,\,
\oo^\W\in \tau_b^\W\}$ is a countable base of the topology of  the separable metric space $\X=\Y\times\W$. Therefore, Assumption {\bf {\bf(H)}} follows
from  Lemma~\ref{R-H}, the equicontinuity of the family of functions $\mathcal{R}_{\oo_1\times\oo_2}$ for any open sets $\oo_1$ in $\Y$ and $\oo_2$ in
$\W$, and the property that, for any finite subset $N$ of $\{1,2,\ldots\}$,
\[\bigcap_{j \in N} (\oo^\Y_j \times \oo^\W_j) = ( \bigcap_{j \in N} \oo^\Y_j ) \times (\bigcap_{j \in N} \oo^\W_j)=\oo_1\times\oo_2, \qquad \oo^\Y_j \in \tau_b^\Y, \oo^\W_j \in \tau_b^\W \mbox{ for all } j \in N, \]
where $\oo_1=\cap_{j \in N} \oo^\Y_j$ and $\oo_2=\cap_{j \in N} \oo^\W_j$ are open subsets of $\Y$ and $\W,$ respectively.
\end{proof}

 {\bf Acknowledgements.} The first author thanks  Alexander A. Yushkevich 11/19/1930 -- 03/18/2012 who introduced
 him to the  theory of MDPs in general and in particular to the question on the existence of optimal policies for
 MDMIIs
  addressed in Theorem~\ref{teor:Ren}.  The authors thank  Huizhen (Janey)
  Yu, for providing an example of the weakly continuous kernels $P$ and
  $Q$ and discontinuous kernel $q$ mentioned before
  Example~\ref{exa1},  and M. Mandava and N.V. Zadoianchuk for their useful remarks. The research of the
first author was partially supported by NSF grants CMMI-0928490
and CMMI-1335296.

\end{document}